\newtheorem{theorem}{Theorem}[section]
\newtheorem{proposition}[theorem]{Proposition}
\newtheorem{lemma}[theorem]{Lemma}
\newtheorem{Theorem}[theorem]{Theorem}
\newtheorem{Lemma}[theorem]{Lemma}
\newtheorem{corollary}[theorem]{Corollary}
\newtheorem{Corollary}[theorem]{Corollary}
\newtheorem{Proposition}[theorem]{Proposition}
\newtheorem{definition}[theorem]{Definition}
\newtheorem{Definition}[theorem]{Definition}
\newtheorem*{remark}{Remark}
\newtheorem*{example}{Example}
\DeclareFontFamily{U}{rsf}{}
\DeclareFontShape{U}{rsf}{m}{n}{
  <5> <6> rsfs5 <7> <8> <9> rsfs7 <10-> rsfs10}{}
\DeclareMathAlphabet{\mathscr}{U}{rsf}{m}{n}
\DeclareMathAlphabet{\mathgth}{U}{euf}{m}{n}
\DeclareFontFamily{U}{cyr}{}
\DeclareFontShape{U}{cyr}{m}{n}{
  <5> wncyr5 <6> wncyr6 <7> wncyr7 <8> wncyr8 <9> wncyr9 <10-> wncyr10}{}
\DeclareMathAlphabet{\mathcyr}{U}{cyr}{m}{n}
\def\operator@font{\sf}
\DeclareMathOperator{\Spec}{Spec}
\newcommand{\Hom}{{\mathsf{Hom}}}
\newcommand{\ra}{\rightarrow}
\newcommand{\C}{\mathbb{C}}
\newcommand{\Z}{\mathbf{Z}}
\newcommand{\field}[1]{\mathbb{#1}}
\newcommand{\F}{\field{F}}
\renewcommand{\phi}{\varphi}
\title{Algebraic geometry over the residue field of the infinite place}
\author{
M\'arton Hablicsek\\[2mm]
Mathematics Department, University of Pennsylvania\\
\texttt{mhabli@math.upenn.edu}
\and
M\'at\'e L. Juh\'asz\\[2mm]
Alfr\'ed R\'enyi Institute of Mathematics\\
Hungarian Academy of Sciences\\
\texttt{juhasz.mate.lehel@renyi.mta.hu}
}
\DeclareMathOperator{\Sym}{Sym}
\DeclareMathOperator{\infplace}{\infty}
\newcommand{\Finfext}[1]{\mathbb{F}_{\infty^{#1}}}
\newcommand{\plus}{\mathop{\dot+}}
\newcommand{\infelem}{\omega}
\newcommand{\Finfty}{\mathbb{F}_{\infplace}}
\renewcommand{\Z}{\mathbb{Z}}
\begin{document}

\definecolor{xdxdff}{rgb}{0.49019607843137253,0.49019607843137253,1.}
\definecolor{qqqqff}{rgb}{0.,0.,1.}

\maketitle

\begin{abstract}
Nikolai Durov introduced the theory of generalized rings and schemes to study Arakelov geometry in an alternative algebraic framework, and introduced the residue field at the infinite place, $\mathbb{F}_{\infty}$. We show an elementary algebraic approach to modules and algebras over this object, define prime congruences, show that the polynomial ring of $n$ variables is of Krull dimension $n$, and derive a prime decomposition theorem for these primes.
\end{abstract}

\section{Introduction}

In the category of schemes, the initial object is $\Spec\mathbb{Z}$, which is not a complete variety. Suren Yurevich Arakelov introduced the concept of Arakelov geometry in \cite{arakelov1974} and \cite{arakelov1975}, by introducing Hermitian metrics on holomorphic vector bundles over the complex points of an arithmetic surface. Arakelov geometry can be used to study diophantine equations from a geometric point of view. For instance, it can be used to prove certain results over number fields which are known over function fields (see \cite{Fal} and \cite{Voj} for examples). In Nikolai Durov's doctoral dissertation (\cite{durov2007}), Durov introduces a new approach to Arakelov geometry, the theory of generalized rings and fields, and uses them, among others, to construct a completion of $\Spec\mathbb{Z}$.

To understand the completion, consider that the divisor of a rational function on a complete curve is always of degree zero. Put differently, the sum of all valuations of a rational function at all points of the curve gives zero. The analoguous formulation for $\Spec\mathbb{Z}$ states that the product of all valuations on $\mathbb{Q}$ of a rational number is always one. Recall that the valuations on $\mathbb{Q}$ are of two kinds: an Archimedean valuation $|.|$ and for all primes $p$ a non-Archimedean valuation $|.|_p$. Just like the valuations of a complete curve, each non-Archimedean valuation on $\mathbb{Q}$ corresponds to a closed point of $\Spec\mathbb{Z}$, however, the Archimedean valuation, called for analogical reasons the \emph{valuation at the infinite place or infinity}, is missing from $\Spec\mathbb{Z}$.

Durov uses this idea to complete $\Spec\mathbb{Z}$, and, among others, he defines the residue field corresponding to the valuation at infinity. In general, for a prime $p$, we introduce $\mathbb{Z}_{(p)}=\{q\in\mathbb{Q}\mid |q|_p\le 1\}$ and the open unit ball $U_p:=\{q\in\mathbb{Z}_{(p)}\mid |q|_p<1\}$, and define the residue field as the quotient $\mathbb{F}_p=\mathbb{Z}_{(p)}/U_p$. For the Archimedean valuation, $\mathbb{Z}_{(\infplace)}=[-1,1]\cap\mathbb{Q}$ and $U_{\infplace}=(-1,1)\cap\mathbb{Q}$, and $\Finfty$ is, intuitively, the closed interval $[-1,1]$ with its interior identified as a single element, $0$. This is in fact the underlying set of the object that Nikolai Durov refers to as $\mathbb{F}_{\infty}$. In this paper we investigate algebras over this generalized ring. Explicitly, this generalized ring has three elements: $-1$, $0$, $1$, equipped with three operations: $\plus$, $\cdot$ and a unary $-$ so that
\begin{itemize}
\item $\plus$ is idempotent: $a\plus a=a$ for every element $a$,
\item $0$ is an absorbing element: $a\plus 0=0$ for every element $a$,
\item $\plus$ is commutative and associative, 
\item $-(1)=-1$, $-(-1)=1$, $-0=0$,
\item $-1+1=0$,
\item $\cdot$ is commutative, associative, and distributive with respect to $\plus$,
\item $1$ is a multiplicative identity, 
\item $0$ is absorbing with respect to $\cdot $ as well,
\item $(-1)\cdot (-1)=1$.
\end{itemize}
For instance, we clearly see that $\Finfty$ is not a ring in the usual sense, it lacks of an additive identity, and, as a consequence, of additive inverses. On the other hand, it is not very far from being a ring itself. One purpose of this paper is to show that algebraic geometry over $\Finfty$ looks similar to algebraic geometry over finite fields.

The paper consists of two parts. First, instead of using Nikolai Durov's full machinery, we give a gentle introduction to the theory of algebras and modules over $\Finfty$. In Section 2, we introduce $\Finfty$-fields, including finite extensions, modules and algebras, and motivate using polynomial rings as the ring of functions. By looking at modules as semilattices, with the addition functioning as a meet-operation, in Section 3 we show how these can be extended into lattices (\ref{thm:finislat}, \ref{thm:infislat}), and use this to understand dual modules (\ref{thm:findual}, \ref{thm:infdual}) and $\Hom$-modules (\ref{thm:homstruct}), at first for finite modules, then using topology, to infinite modules. In fact, any module can be embedded into one where infinite sums and joins exist. In Section 4, we examine the theory of congruences and kernels. In particular, it turns out that for any ideal there is always a maximal congruence whose kernel is the ideal, which can be identified by a separability condition (\ref{thm:maxcongmodule} and \ref{thm:maxcongalgebra}). Then we turn to congruences in $\Finfty$-fields, where the congruence is characterized completely by the equivalence class of $1$, neatly mirroring classical ring theory with the equivalence class of $0$.

Second, we take the first steps towards algebraic geometry over $\Finfty$. Our theory is mainly motivated by a novel approach by D\'aniel Jo\'o and Kalina Mincheva (\cite{joo2014}, \cite{joo2015}). One of the key ideas in these papers is that prime congruences are more natural objects to study than prime ideals. The authors study tropical geometry using prime congruences instead of prime ideals, and they prove a version of Hilbert's Nullstellensatz in the realm of tropical geometry. We follow this key idea and we define prime congruences in algebras over $\Finfty$ and, among others, we show that the polynomial ring of $n$ variables has Krull dimension $n$ (see Corollary \ref{cor:krull}), and we derive a prime decomposition theorem (see Theorem \ref{thm:primedec}). As a consequence, we bring in line the theory of modules and algebras over $\Finfty$ with the theory of classical finite fields.

\bigskip

\textbf{Acknowledgement:} We thank Kalina Mincheva and D\'aniel Jo\'o for patiently explaining their work to us. Most of the techniques we use in the second part of the paper appear in some form in their work.

We would also like to thank an anonymous referee for their invaluable comments.

\section{Modules and algebras over $\Finfty$}
\label{sec:mod}

First, let us recall what the object $\Finfty$ is (see \cite{durov2007}, \textbf{5.1.16}).

\begin{definition}
The \textbf{generalized field} or \textbf{field} $\Finfty$ has as set the elements $\{-1,0,1\}$, with three operations: a unary $-$, and a binary $\plus$ and $\cdot$, with the following properties:
\begin{itemize}
\item $(a\plus b)\plus c=a\plus (b\plus c)$, $a\plus b=b\plus a$;
\item $0\plus a=a\plus 0=0$, hence $0$ is an \textbf{absorbing element};
\item $a\plus a=a$;
\item $a\plus(-a)=0$;
\item $-(1)=-1$, $-(-1)=1$, $-0=0$;
\item $(a\cdot b)\cdot c=a\cdot (b\cdot c)$, $a\cdot b=b\cdot a$;
\item $1\cdot a=a\cdot 1=a$;
\item $(-1)\cdot(-1)=1$.
\end{itemize}
\end{definition}

Note in particular that the operator $\plus$ is unlike the typical addition in that \textbf{$0$ is an absorbing element} and that \textbf{$-a$ is not the additive inverse of $a$}. The notations were chosen this way because of the way Durov defined these structures in \cite{durov2007}, and since in his development, $\plus$ is derived as a convex combination. Durov chose the notation $*$ for this operator, but we wanted to preserve its connection to addition. Also note that $0$ is absorbing for both operations.

\bigskip

Intuitively, a module over the field at infinity, $\Finfty$, corresponds to the faces of a symmetric polyhedron, where the binary operation is \emph{the smallest face containing both}. Then the field $\Finfty$ itself is the digon $[-1,1]$ with three elements: $1$, $-1$ and $0$, and the binary operation $\plus$ is \emph{the element between}. Although there are modules that cannot be realized as actual symmetric polyhedra, this can be a useful visualization.

In the following definitions, we will denote by indices the arity of the operations.
In \cite{durov2007}, modules over generalized rings are described in \textbf{4.3.7}. Here we give a more direct description for $\Finfty$.

\begin{definition}
\label{def:module}
An \textbf{$\Finfty$-module} is a structure $(V,0_0,-_1,\plus_2)$ such that:
\begin{itemize}
\item $(a\plus b)\plus c=a\plus (b\plus c)$, $a\plus b=b\plus a$;
\item $a\plus a=a$, $a\plus (-a)=0$;
\item $-(a\plus b)=(-a)\plus (-b)$; $-(-a)=a$.
\end{itemize}
A \textbf{submodule}, \textbf{congruence}, \textbf{quotient module} and \textbf{module homomorphism} are defined as usual.
\end{definition}

Note that contrary to usual conventions, $0$ is not a neutral element, and $-a$ is not an additive inverse.

\begin{proposition}
$0$ is an absorbing element in an $\Finfty$-module.
\end{proposition}

\begin{proof}
$0=a\plus (-a)=a\plus a\plus (-a)=a\plus 0$.
\end{proof}

\begin{example}
The set $\Finfty=\{-1,0,1\}$ is an $\Finfty$-module, defined uniquely by the module axioms.
\end{example}

\begin{proposition}
\label{thm:natorder}
An $\Finfty$-module has a natural partial order defined by $a\le b$ if $a\plus b=a$, with $0$ being the smallest element, i.e. $0\plus x=0$.
\end{proposition}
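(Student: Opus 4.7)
The plan is to verify the three defining properties of a partial order (reflexivity, antisymmetry, transitivity) by direct manipulation of the module axioms, and then handle the claim about $0$ as a separate short computation.

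First I would establish reflexivity: for any $a$, the idempotency axiom $a+a=a$ says exactly that $a \le a$. Next, for antisymmetry, if $a \le b$ and $b \le a$, then by definition $a+b=a$ and $b+a=b$; commutativity of $+$ then immediately yields $a = a+b = b+a = b$. For transitivity, if $a \le b$ and $b \le c$, I would compute
\[ a + c = (a+b) + c = a + (b+c) = a + b = a, \]
using associativity together with the two hypotheses, so $a \le c$.

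For the claim that $0$ is the smallest element, I need to prove $0+x = 0$ for every $x$ (the remark immediately preceding the proposition already asserts this, so it is the expected lemma). The cleanest route is
\[ x + 0 = x + \bigl(x + (-x)\bigr) = (x+x) + (-x) = x + (-x) = 0, \]
where the first equality uses the axiom $x + (-x) = 0$, the second is associativity, the third is idempotency, and the fourth again uses $x + (-x) = 0$. Combining with commutativity, $0 + x = 0$, which is precisely the condition $0 \le x$.

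I do not anticipate any genuine obstacle: each step is a short axiomatic manipulation. The only subtlety worth flagging is that the absorbing property of $0$ is not one of the listed axioms but a derived consequence, so it should be proved (or at least pointed to as the content of the preceding remark) before being invoked to conclude that $0$ is the minimum.
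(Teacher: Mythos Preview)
Your proposal is correct and follows essentially the same route as the paper: reflexivity from idempotence, antisymmetry from commutativity, and transitivity via the computation $a+c=(a+b)+c=a+(b+c)=a+b=a$. You additionally supply the short derivation of $0+x=0$ from the axioms, which the paper merely asserts in the remark preceding the proposition rather than proving inside it.
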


\begin{proof}
Reflexivity arises from idempotence, symmetry from commutativity. If $a\le b\le c$, then $a\plus b=a$ and $b\plus c=b$, hence $a\plus c=(a\plus b)\plus c=a\plus b=a$, therefore $a\le c$.
\end{proof}

\begin{definition}
A \textbf{maximal element} $x$ is such that there is no such $y$ that $x<y$, i.e. $x\plus y=x$ if and only if $x=y$.
A \textbf{minimal element} $x$ is such that $y<x$ only for $y=0$, i.e. $x\plus y=y$ if and only if $x=y$ or $y=0$.
\end{definition}

We will also need rings over $\Finfty$:

\begin{definition}
\label{def:algebra}
An \textbf{$\Finfty$-algebra} or \textbf{ring} is an $\Finfty$-module $A$ with a semigroup structure $(A,\cdot_2)$ such that
\begin{itemize}
\item $a\cdot0=0\cdot a=0$;
\item $-a\cdot b=a\cdot-b=-(a\cdot b)$;
\item $a\cdot(b\plus c)=(a\cdot b)\plus (a\cdot c)$, $(a\plus b)\cdot c=(a\cdot c)\plus (b\cdot c)$.
\end{itemize}
A \textbf{subalgebra}, \textbf{congruence}, \textbf{quotient algebra} and \textbf{algebra homomorphism} are defined as usual.

A \textbf{unital algebra} is a monoid structure $(V,1_0,\cdot_2)$ such that $a\cdot 1=1\cdot a=a$. An \textbf{invertible element} $a$ is such that there is an $a^{-1}$ such that $a\cdot a^{-1}=1$, and the group of invertible elements is denoted by $A^\times$. The unital algebra is a \textbf{division algebra}  if $(V\setminus\{0\},1,\cdot)$ is a group, and a \textbf{$\Finfty$-field} if it is a commutative group.
\end{definition}

Recall once again that $0$ is not a neutral element for addition:

\begin{proposition}
$0$ is an absorbing element in an $\Finfty$-algebra.
\end{proposition}

\begin{proposition}
In a finite unital algebra, all invertible elements are incomparable.
\end{proposition}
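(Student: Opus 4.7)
The plan is to show that if $a, b \in A^\times$ satisfy $a \leq b$, then $a = b$. My first reduction is to note that multiplication preserves the natural order of Proposition \ref{thm:natorder}: if $a + b = a$, then distributivity gives $ac + bc = (a+b)c = ac$, so $ac \leq bc$ for every $c \in A$. Applying this with $c = b^{-1}$, and setting $u := ab^{-1} \in A^\times$, the problem reduces to showing that every invertible $u$ with $u \leq 1$ satisfies $u = 1$. (Then $ab^{-1} = 1$, so $a = b$.)

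Next I would establish that the inequality $u \leq 1$ propagates to all powers: multiplying $u + 1 = u$ by $u^n$ gives $u^{n+1} + u^n = u^{n+1}$, i.e., $u^{n+1} \leq u^n$. Thus we obtain a descending chain
\[
1 \geq u \geq u^2 \geq u^3 \geq \cdots
\]
in $A$.

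Now I invoke finiteness of $A$: the chain above must stabilize, so there exist $m < n$ with $u^m = u^n$. Since $u$ is invertible, we may multiply by $u^{-m}$ to obtain $u^{n-m} = 1$. Hence $u$ has finite multiplicative order $k \geq 1$, and in particular $u^k = 1$ sits at some position of the descending chain, giving $u \geq u^k = 1$. Combined with the hypothesis $u \leq 1$, antisymmetry of the partial order (Proposition \ref{thm:natorder}) yields $u = 1$, and therefore $a = b$.

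I do not anticipate a real obstacle; the only subtle point is verifying that multiplication is order-preserving, which is immediate from distributivity, and that this allows us to translate the hypothesis $a \leq b$ into the normalized statement $u \leq 1$. The finiteness assumption enters only to guarantee that an invertible element bounded above by $1$ must be torsion, which forces the chain $1 \geq u \geq u^2 \geq \cdots$ to collapse.
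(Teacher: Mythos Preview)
Your proof is correct and follows essentially the same approach as the paper: both reduce via the order-preserving property of multiplication (from distributivity) to comparing an invertible element with $1$, and both then use that the powers of such an element form a monotone chain which, by finiteness, must collapse. The only cosmetic difference is that the paper argues by contradiction via an infinite strictly increasing sequence $1 < a < a^2 < \cdots$, whereas you phrase it as a descending chain that stabilizes and forces $u$ to be torsion; these are two sides of the same coin.
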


\begin{proof}
First, $1$ is not less than any invertible element, since if $1<a$ for $a$ invertible, then $a^i$ gives an infinite increasing sequence. Similarly $1$ is not greater than any invertible element. Then, if $a$ and $b$ are invertible, and $a<b$, we may multiply both sides by $a^{-1}$, which preserves the inequality by the distributivity of multiplication. Hence $1<a^{-1}b$, which is a contradiction.
\end{proof}

\begin{corollary}
\label{cor:findivalg}
In a finite division algebra, all non-zero elements are minimal and maximal. Hence for any $a$, $b\in A$, $a\plus b=0$ unless $a=b$.
\end{corollary}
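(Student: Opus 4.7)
The plan is to deduce both claims directly from the previous proposition, which already establishes that any two invertible elements are unordered. In a finite division algebra every non-zero element is invertible, so all non-zero elements are pairwise unordered; I just need to convert this into the maximality/minimality statements and then handle sums.

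First I would argue maximality of a non-zero $x$. Suppose $y$ satisfies $x+y=x$. If $y=0$, the absorbing property gives $x=x+0=0$, contradicting $x\ne 0$. Otherwise $y$ is invertible, so by the preceding proposition $x$ and $y$ are unordered; but $x+y=x$ says $x\le y$, forcing $x=y$. For minimality of a non-zero $x$, suppose $x+y=y$, i.e.\ $y\le x$. Either $y=0$ (the allowed exception) or $y$ is invertible, in which case the unorderedness of invertibles again forces $y=x$.

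For the "hence" clause, set $c:=a+b$ and use associativity together with idempotence:
\[
a+c \;=\; a+(a+b) \;=\; (a+a)+b \;=\; a+b \;=\; c,
\]
so $c\le a$, and symmetrically $c\le b$. If $a=0$ or $b=0$, absorption immediately yields $a+b=0$. Otherwise $a$ and $b$ are non-zero, hence minimal by the first part; thus $c\le a$ gives $c\in\{0,a\}$ and $c\le b$ gives $c\in\{0,b\}$. Whenever $a\ne b$ the only common value is $c=0$, as required; in the remaining case $a=b$ one has $a+b=a+a=a$ by idempotence, which is the excluded exception.

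There is no real obstacle here, since the proposition above does all the structural work; the only subtlety is bookkeeping the boundary case in which one of $a,b$ is zero (where "invertible" does not apply) and remembering that the natural order of Proposition \ref{thm:natorder} has $0$ as its bottom, so $c\le a$ really does leave only the two options $0$ and $a$ once $a$ is known to be minimal.
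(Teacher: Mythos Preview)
Your argument is correct and is precisely the intended deduction: the paper states this corollary without proof, treating it as immediate from the preceding proposition that invertible elements are unordered, and your write-up simply makes explicit the routine case analysis (handling $y=0$ via absorption, and reading off maximality/minimality and the behaviour of $a+b$ from unorderedness).
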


\begin{remark}
Note that our usage of \textbf{$\Finfty$-algebra} is more in line with what Durov refers to as \emph{unary algebras} over $\Finfty$, as defined in \textbf{5.1.15} in \cite{durov2007}, referred to as such because what we describe as elements, he refers to them as \emph{unary operations} (\textbf{4.3.9}). A proof for this will be sketched in \ref{prop:Durov}. What he refers to as algebras, described in \textbf{5.1.9}, are more general structures, and may have additional operations of higher arity.
\end{remark}

\begin{example}
~
\begin{itemize}
\item The field $\Finfty$ is a commutative division $\Finfty$-algebra.

\item $\Finfext{k}$ is the $\Finfty$-field generated by $\zeta_k$ such that $\zeta_k^k=-1$ (for the definition of a freely generated algebra, see \ref{def:freealg} and \ref{prop:freealg}). Its elements are $\{\zeta_k^i\mid i\in\{0,1,\dots,2k-1\}\}$, and by \ref{cor:findivalg} we have $a\plus b=0$ unless $a=b$. These fields can be embedded as subsets of $\mathbb{C}$, and the diagram below shows the $k=3$ case.

\hspace{1cm}
\begin{tikzpicture}[line cap=round,line join=round,>=triangle 45,x=1.0cm,y=1.0cm]
\clip(2.62,3.06) rectangle (10.22,8.52);
\draw(7.,6.) circle (2.160833172644293cm);
\begin{scriptsize}
\draw [fill=qqqqff] (7.,6.) circle (2.5pt);
\draw[color=qqqqff] (7.14,6.36) node {$0$};
\draw [fill=qqqqff] (9.16,5.94) circle (2.5pt);
\draw[color=qqqqff] (9.3,6.3) node {$1$};
\draw[color=black] (5.96,7.6) node {};
\draw [fill=xdxdff] (5.6693162168405475,7.702492487277535) circle (2.5pt);
\draw[color=xdxdff] (5.8,8.06) node {$\zeta^2$};
\draw [fill=xdxdff] (5.703500096413424,4.271333461884566) circle (2.5pt);
\draw[color=xdxdff] (5.84,4.64) node {$\zeta^3$};
\draw [fill=xdxdff] (4.840620343485974,6.079243290147304) circle (2.5pt);
\draw[color=xdxdff] (5.16,6.44) node {$-1$};
\draw [fill=xdxdff] (8.327995025105867,7.704590629240366) circle (2.5pt);
\draw[color=xdxdff] (8.46,8.06) node {$\zeta$};
\draw [fill=xdxdff] (8.330683783159452,4.297507512722465) circle (2.5pt);
\draw[color=xdxdff] (8.48,4.66) node {$\zeta^4$};
\end{scriptsize}
\end{tikzpicture}

\item Given $k|\ell$, there is a natural embedding of $\Finfext{k}$ into $\Finfext{\ell}$, given by $\zeta_k=\zeta_\ell^{\ell/k}$. The field $\Finfext{\infty}:=\varinjlim \Finfext{k}$ can be embedded as a subset of $\C$. Its elements are the roots of unity and 0.

\item The underlying sets of all these finite fields can be embedded into $\mathbb{C}$ as the $k$th roots of unity. The Euclidean closure of $\Finfext{\infty}$ in $\mathbb{C}$ is given as $\overline{\Finfext{\infty}}:=\mathbb{T}\cup \{0\}=\{z\in\mathbb{C}\mid |z|=1\}\cup \{0\}$. Using the multiplication on $\mathbb{C}$ and defining addition as $a\plus b=0$ unless $a=b$, this gives a field structure to the set $\overline{\Finfext{\infty}}$.

\item We may also consider the extension with $\zeta^k=1$. In this case, its elements are $\pm\zeta^i$ for $i\in\{0,\dots,k\}$. These and $\Finfext{k}$ will appear later as quotients of the polynomial ring $\Finfty[x]$ in \ref{thm:polyprimes}.

\item More generally, given a group $G$ and a field $\mathbb{F}$, the \textbf{group algebra} $\mathbb{F}[G]$ consists of elements $0$ and $\lambda g$ with $\lambda\in\mathbb{F}^\times$ and $g\in G$, with the law of addition that $\lambda g\plus \lambda'g'=(\lambda\plus \lambda')g$ if $g=g'$, otherwise $0$, and $\lambda g\cdot\lambda'g'=(\lambda\lambda')(gg')$. The previous example is in fact $\Finfty[\mathbb{Z}/k\mathbb{Z}]$.

\item There is a more general way to construct fields. Consider a commutative group $G$ with an injective map $f\colon\Finfty^\times\to G$. Then the set $G\cup\{0\}$ has a natural $\Finfty$-field structure, defined via the group operation as multiplication, $a\plus b=0$ unless $a=b$, and $-a=f(-1)a$. This generalizes group algebras, with $G=\mathbb{F}^\times\times H$ for $\mathbb{F}[H]$. 

\item For an example of a $\Finfty$-field where the order is non-trivial, consider the set $\{\pm x^i\mid i\in\mathbb{Z}\}\cup\{0\}$ with the addition $x^i\plus x^j=x^i$ if $i\ge j$, and $x^i\cdot x^j=x^{i+j}$.

\definecolor{qqqqff}{rgb}{0.,0.,1.}
\begin{tikzpicture}[line cap=round,line join=round,>=triangle 45,x=1.0cm,y=1.0cm]
\clip(1.24,1.88) rectangle (12.84,10.34);
\draw (7.02,2.48)-- (5.9,3.68);
\draw (7.02,2.48)-- (7.86,3.64);
\draw (5.9,3.68)-- (5.86,5.);
\draw (7.86,3.64)-- (7.84,4.96);
\draw (5.86,5.)-- (5.86,6.38);
\draw (7.84,4.96)-- (7.84,6.38);
\draw (5.86,6.38)-- (5.86,7.86);
\draw (7.84,6.38)-- (7.84,7.86);
\draw (5.86,7.86)-- (5.86,9.34);
\draw (7.84,7.86)-- (7.84,9.34);
\begin{scriptsize}
\draw [fill=qqqqff] (7.02,2.48) circle (2.5pt);
\draw[color=qqqqff] (7.06,2.92) node {$0$};
\draw[color=black] (6.74,3.46) node {};
\draw[color=black] (7.74,3.04) node {};
\draw [fill=qqqqff] (5.86,5.) circle (2.5pt);
\draw[color=qqqqff] (6.,5.36) node {$x$};
\draw[color=black] (6.08,4.48) node {$...$};
\draw [fill=qqqqff] (7.84,4.96) circle (2.5pt);
\draw[color=qqqqff] (7.98,5.32) node {$-x$};
\draw[color=black] (8.14,4.48) node {$...$};
\draw [fill=qqqqff] (5.86,6.38) circle (2.5pt);
\draw[color=qqqqff] (6.,6.74) node {$1$};
\draw[color=black] (6.22,5.86) node {};
\draw [fill=qqqqff] (7.84,6.38) circle (2.5pt);
\draw[color=qqqqff] (7.98,6.74) node {$-1$};
\draw[color=black] (8.2,5.84) node {};
\draw [fill=qqqqff] (5.86,7.86) circle (2.5pt);
\draw[color=qqqqff] (6.4,8.22) node {$x^{-1}$};
\draw[color=black] (6.22,7.28) node {};
\draw [fill=qqqqff] (7.84,7.86) circle (2.5pt);
\draw[color=qqqqff] (8.38,8.22) node {$-x^{-1}$};
\draw[color=black] (8.2,7.28) node {};
\draw[color=black] (6.12,8.76) node {$...$};
\draw[color=black] (8.1,8.74) node {$...$};
\end{scriptsize}
\end{tikzpicture}
\end{itemize}
\end{example}

Modules and algebras can also be considered over other fields.

\begin{definition}
Assume $\mathbb{A}$ is an $\Finfty$-algebra. An $\mathbb{A}$-module $M$ is an $\Finfty$-module with a binary operation $\mathbb{A}\times M\to M$, such that
\begin{itemize}
\item $a\cdot (b\cdot m)=(a\cdot b)\cdot m$ for $a$, $b\in\mathbb{A}$, $m\in M$;
\item $a\cdot (m\plus n)=a\cdot m\plus a\cdot n$ and $(a\plus b)\cdot m=a\cdot m\plus b\cdot m$ for $a$, $b\in\mathbb{A}$, $m$, $n\in M$;
\item $(-a)\cdot m=-(a\cdot m)=a\cdot(-m)$ for $a\in\mathbb{A}$, $m\in M$;
\item $0\cdot m=a\cdot 0=0$ for $a\in\mathbb{A}$, $m\in M$;
\item If $\mathbb{A}$ is unital, we further postulate $1\cdot m=m$ for $m\in M$.
\end{itemize}
An $\mathbb{A}$-algebra $M$ is an $\mathbb{A}$-module that is also an $\Finfty$-algebra, such that
\begin{itemize}
\item $a\cdot (m\cdot n)=(a\cdot m)\cdot n=m\cdot(a\cdot n)$ for $a\in\mathbb{A}$, $m$, $n\in M$.
\end{itemize}
\end{definition}

\begin{lemma}
The category theoretic free module $\mathbb{A}(n)$ over $\mathbb{A}$, generated by $x_1,\dots,x_n$ consists of elements $\bigoplus_{i\in I} \lambda_ix_i$ for $I\subseteq\{1,\dots,n\}$, $\lambda_i\in\mathbb{A}\setminus\{0\}$, and the element $0$. The operation $\bigoplus_{i\in I}\lambda_ix_i\plus\bigoplus_{j\in J}\mu_jx_j=\bigoplus_{i\in I\cap J}(\lambda_i\plus\mu_i)x_i\oplus\bigoplus_{i\in I\setminus J}\lambda_ix_i\oplus\bigoplus_{j\in J\setminus I}\mu_jx_j$ with $0x_i=0$ and $0\plus a=0$ for any $a$.
\end{lemma}

This is a consequence of \ref{prop:freemod} that we will announce later.

\begin{theorem}
\label{prop:Durov}
Any $\Finfty$-algebra $\mathbb{A}$ (defined as in \ref{def:algebra}) can be given a generalized ring structure, as defined in \cite{durov2007}, and the categories of modules and algebras over it are equivalent to the categories of modules and unary algebras over $\mathbb{A}$ respectively, the later as defined in \cite{durov2007}. Also, $\mathbb{A}$ is an $\Finfty$-algebra, in the sense of \cite{durov2007}.
\end{theorem}

\begin{proof}
Here we will sketch the proof, referring to sections in \cite{durov2007}, denoted by bold numerals.

Durov defines generalized rings in \textbf{5.1} as a commutative algebraic monad. In \textbf{4.3}, an algebraic monad $\Sigma$ is described by defining $\Sigma(n)$, which is a free module generated by $n$ elements, as shown in \textbf{4.6.5}, and several natural maps $\mu_n^{(k)}\colon\Sigma(k)\times\Sigma(n)^k\to\Sigma(k)$, which satisfy certain natural relations. In \textbf{4.3.7}, modules $M$ are defined through maps $\alpha^{(k)}\colon\Sigma(k)\times M^k\to M$ with a similar description.

In our case, we already have $\mathbb{A}(n)$, consisting of terms in variables $x_1$, {\dots}, $x_n$. We may define $\mu_n{(k)}(t,s_1,\dots,s_n)$ and $\alpha^{(k)}(t,\sigma_1,\dots,\sigma_n)$ for $t\in\mathbb{A}(n)$, $s_i\in\mathbb{A}(k)$ and $\sigma_i\in M$ by replacing each $x_i$ by $s_i$ and $\sigma_i$, respectively, in $t$. Then the relations and commutativity may be checked directly. It can be checked that the homomorphisms are identical to those defined in \ref{def:module} as well by noting that all elements of $\mathbb{A}(n)$ can be written using $+$ and the variables $x_1$, {\dots}, $x_n$.

It is also worth noting that in general, every universal algebraic variety containing no relations (i.e. a category of algebras satisfying a certain set of equalities) gives rise to an algebraic monad, hence the variety of $\mathbb{A}$-modules is also trivially an algebraic monad, giving equivalent categories. The only condition needing to be checked separately is the commutativity, which is a very simple corollary of \textbf{5.1.7} and the fact that $\mathbb{A}$ is generated by operations of arity at most $2$.

Since any $\Finfty$-algebra $\mathbb{A}$, according to our definition, can be generated by adding elements $a_i\in\mathbb{A}$ and relations $f_i=g_i$ between terms in $a_i$, these are exactly what Durov refers to as \emph{unary algebras} in \textbf{5.1.15}, as all of $a_i$, $f_i$ and $g_i$ are unary operations, according to \textbf{4.3.9}. Furthermore, as announced in \textbf{5.3.8} and \textbf{5.3.13}, the category of unary $\mathbb{A}$-algebras is equivalent to the category of algebras in the category of $\mathbb{A}$-modules, and by \textbf{5.3.9}, this is identical to how we defined $\mathbb{A}$-algebras and homomorphisms in \ref{def:algebra}.

In \textbf{5.1.9}, an algebra $\Sigma$ over $\Lambda$ is defined by the existence of a ring homomorphism $\Lambda\to\Sigma$, and given an $\mathbb{A}$-algebra $M$, we have a natural map $\mathbb{A}\to M$.
\end{proof}

Henceforth we will consider only unital algebras.

\smallskip

Let $\mathbb{F}$ denote an $\Finfty$-field. Recall that $\mathbb{F}^\times=\mathbb{F}\setminus\{0\}$.

\begin{definition}
The \textbf{dimension} of an $\mathbb{F}$-module $M$, denoted by $\dim M$, is the maximal length of a chain of decreasing elements.
\end{definition}

\begin{example}
Fix an integer $n\ge 2$, and let us denote the vertices of a regular $2n$-gon $P$ by $v_i$ for $i\in\{0,\dots,2n-1\}$ and the edges connecting $v_i$ to $v_{i+1}$ by $e_i$. The lattice of all faces $\{P,v_i,e_i\mid i\in\{0,\dots,2n-1\}\}$ has a natural $\Finfty$-module structure with $P=0$, $-v_i=v_{i\pm n}$ and $v_i\plus v_{i+1}=e_i$, otherwise $x\plus y=0$. It has dimension $2$, since the sum of pairwise uncomparable elements is non-zero if and only if there is a single term, or two adjacent vertices.
\end{example}

\begin{example}
Given a convex, symmetric polyhedron $P$ in $\mathbb{R}^n$ such that it has a non-empty interior, the set of faces has a natural $\Finfty$-module structure with $P=0$ and $f\plus f'$ is the smallest face that contains $f$ and $f'$. This has dimension $n$, as the longest chain of faces contains one of each dimension, including $P$.
\end{example}

\begin{definition}
For two modules $M_1$ and $M_2$, $M_1+M_2$ is the \textbf{coproduct}, whose elements are of the form $m_1\in M_1$, $m_2\in M_2$ and $m_1\oplus m_2$, with $0_{M_1}=0_{M_2}=0_{M_1}+0_{M_2}$ identified. The coproduct of several modules is denoted by $\sum_{i=1}^n M_i$.\hfill\break
$M_1\times M_2$ is the \textbf{Cartesian product}, with operations evaluated coordinate-wise. The product of several modules is denoted by $\prod_{i=1}^n M_i$. The \textbf{free module} generated by $n$ elements is given by $\mathbb{F}+\cdots+\mathbb{F}$. The free module generated by elements of a set $S$ is denoted by ${\cal F}(S)$.\hfill\break
The set of module-homomorphisms is denoted by $\Hom(M_1,M_2)$.
\end{definition}

\begin{proposition}\label{prop:freemod}
The coproduct, Cartesian product and free module are the category theoretical coproduct, product and free object. Also, the free module ${\cal F}(S)$ generated by a set $S$ is isomorphic to the coproduct $\sum_{s\in S}\mathbb{F}$ (Note that if $S$ is infinite, the coproduct is infinite as well).
\end{proposition}

\begin{proof}
These are all trivial consequences of theorems in universal algebra.
\end{proof}

\begin{proposition}
$\Hom(M_1,M_2)$ has a natural $\mathbb{F}$-structure.
\end{proposition}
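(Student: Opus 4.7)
The plan is to equip $\Hom(M_1,M_2)$ with pointwise operations: for $f,g\in\Hom(M_1,M_2)$ and $a\in\mathbb{F}$, I would set $(f+g)(m):=f(m)+g(m)$, $(-f)(m):=-(f(m))$, $(a\cdot f)(m):=a\cdot f(m)$, and take $0$ to be the constant homomorphism sending every element to $0\in M_2$. The proof then splits into two steps: verifying that each of these operations indeed yields an $\mathbb{F}$-module homomorphism, and verifying that the resulting operations satisfy the $\mathbb{F}$-module axioms.

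For the first step I would check compatibility of $f+g$, $-f$ and $a\cdot f$ with addition, negation, and the $\mathbb{F}$-action on $M_1$. The addition and negation checks use only commutativity and associativity of $+$ in $M_2$ together with the identity $-(x+y)=(-x)+(-y)$; for example $(f+g)(m+m')=f(m)+f(m')+g(m)+g(m')=(f+g)(m)+(f+g)(m')$, and $(a\cdot f)(m+m')=a\cdot(f(m)+f(m'))=a\cdot f(m)+a\cdot f(m')$ by distributivity of the $\mathbb{F}$-action. Compatibility with the $\mathbb{F}$-action is immediate for $f+g$ and $-f$ from distributivity and the sign rule, but for $a\cdot f$ one computes $(a\cdot f)(b\cdot m)=a\cdot b\cdot f(m)$, which must equal $b\cdot(a\cdot f)(m)=b\cdot a\cdot f(m)$. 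This is the only step that is not purely formal: it uses that $\mathbb{F}$ is commutative (which covers every division algebra appearing in Section \ref{sec:mod}).

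For the second step, the $\Finfty$-module axioms for $\Hom(M_1,M_2)$ — idempotence $f+f=f$, commutativity and associativity of $+$, the relations $f+(-f)=0$, $-(-f)=f$, $-(f+g)=(-f)+(-g)$ — together with the $\mathbb{F}$-action axioms $(ab)\cdot f=a\cdot(b\cdot f)$, $a\cdot(f+g)=a\cdot f+a\cdot g$, $(a+b)\cdot f=a\cdot f+b\cdot f$, $(-a)\cdot f=-(a\cdot f)=a\cdot(-f)$, $0\cdot f=a\cdot 0=0$, and $1\cdot f=f$ all reduce, upon evaluation at an arbitrary $m\in M_1$, to the corresponding axiom in $M_2$ (or in $\mathbb{F}$). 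These verifications are routine and I would not grind through them.

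The main — essentially the only — obstacle is the commutativity point in the first step: guaranteeing that $a\cdot f$ is itself $\mathbb{F}$-linear forces either $\mathbb{F}$ to be commutative or a restriction of scalars to the centre of $\mathbb{F}$. Since all the division algebras discussed so far ($\Finfty$, $\Finfext{k}$, $\overline{\Finfext{\infty}}$, and the group-algebra examples) are commutative, this is not a genuine restriction here, but it is worth flagging as the one place where the argument is not entirely automatic.
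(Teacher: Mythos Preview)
Your proposal is correct and takes the same approach as the paper: the paper's proof is the single sentence ``The pointwise sum and scalar multiple of two homomorphisms is a homomorphism,'' which is exactly your construction, minus the verifications you spell out. Your flag about commutativity of $\mathbb{F}$ is a genuine point the paper glosses over; it is harmless here since every division algebra in play is commutative, but you are right that it is the one non-automatic step.
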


\begin{proof}
The pointwise sum and scalar multiple of two homomorphisms is a homomorphism.
\end{proof}

\begin{remark}
This notion of $\Hom$ is identical to the definition in \cite{durov2007}, \textbf{4.6.3}, further elaborated in \textbf{5.3.1}.
\end{remark}

\begin{definition}
Given two $\mathbb{F}$-modules $M_1$ and $M_2$, the \textbf{tensor product} $M_1\otimes M_2$ is a module with a natural bilinear map $M_1\times M_2\to M_1\otimes M_2$ where $M_1\times M_2$ is the set of pairs, such that for any bilinear map $M_1\times M_2\to N$ for some other module $N$, there is exactly one map $M_1\otimes M_2\to N$ that makes the diagram $M_1\times M_2\to M_1\otimes M_2\to N$ commute.
\end{definition}

\begin{proposition}
Given two $\mathbb{F}$-modules $M_1$ and $M_2$, $M_1\otimes M_2$ exists and is unique. It is generated by elements of the form $m_1\otimes m_2$ with $m_1\in M_1$ and $m_2\in M_2$. Furthermore $\cdot\otimes M$ is a covariant functor.
\end{proposition}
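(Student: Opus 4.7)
The plan is the standard one for tensor products, adapted to the $\mathbb{F}$-module setting using the free module and coproduct constructions already available in the paper. Uniqueness is immediate from the universal property: any two objects $T$, $T'$ with the universal mapping property for bilinear maps out of $M_1\times M_2$ admit mutually inverse module homomorphisms between them, so $T\iso T'$ canonically. The functoriality and the ``generated by simple tensors'' assertion both follow once existence is established, so the real content is existence.

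For existence, I would first form the free $\mathbb{F}$-module $\mathcal{F}(M_1\times M_2)$ on the underlying set of pairs, using the identification $\mathcal{F}(S)\iso \sum_{s\in S}\mathbb{F}$ noted above. Write $[m_1,m_2]$ for the generator corresponding to $(m_1,m_2)$. Then quotient by the smallest $\mathbb{F}$-module congruence containing the relations
\[
[m_1+m_1',m_2]\sim [m_1,m_2]+[m_1',m_2], \qquad [m_1,m_2+m_2']\sim [m_1,m_2]+[m_1,m_2'],
\]
\[
[am_1,m_2]\sim a[m_1,m_2]\sim [m_1,am_2] \text{ for } a\in\mathbb{F}, \qquad [0,m_2]\sim 0\sim [m_1,0],
\]
and denote the quotient by $M_1\otimes M_2$, with $m_1\otimes m_2$ the class of $[m_1,m_2]$. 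The map $M_1\times M_2\to M_1\otimes M_2$, $(m_1,m_2)\mapsto m_1\otimes m_2$, is then bilinear by construction. Given any bilinear $\beta\colon M_1\times M_2\to N$, the universal property of the free module extends the set-theoretic map $(m_1,m_2)\mapsto \beta(m_1,m_2)$ to a unique homomorphism $\tilde\beta\colon \mathcal{F}(M_1\times M_2)\to N$; bilinearity of $\beta$ says exactly that $\tilde\beta$ kills the generating relations, so it descends to a unique homomorphism $M_1\otimes M_2\to N$. The generation statement is built in, since $M_1\otimes M_2$ is a quotient of the free module on pairs.

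For functoriality of $\scdot\otimes M$, given $f\colon M_1\to M_1'$ the composite $M_1\times M\xrightarrow{f\times\id} M_1'\times M\to M_1'\otimes M$ is bilinear, so the universal property supplies a unique homomorphism $f\otimes \id_M\colon M_1\otimes M\to M_1'\otimes M$ sending $m_1\otimes m\mapsto f(m_1)\otimes m$. Uniqueness in the universal property then forces $\id\otimes \id=\id$ and $(g\circ f)\otimes \id=(g\otimes \id)\circ (f\otimes\id)$.

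The one point that needs a little care, and which I would flag as the main potential obstacle, is that the defining relations must be imposed as a \emph{congruence} rather than via a submodule kernel, because in this setting addition is idempotent and there is no ``subtraction of relations'' available as in the classical abelian-group tensor product. Concretely, I would invoke the theory of congruences on $\mathbb{F}$-modules set up in Section~4 to form the quotient, so that the resulting $M_1\otimes M_2$ inherits a well-defined $\mathbb{F}$-module structure from $\mathcal{F}(M_1\times M_2)$ and the generating relations are satisfied on the nose. Once this is done, the verification that bilinear maps factor uniquely is formal.
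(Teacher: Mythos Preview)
Your proposal is correct and follows essentially the same route as the paper: uniqueness by the universal property (the paper calls this ``diagram chasing''), existence by forming the free module on the set $M_1\times M_2$ and passing to the quotient by the congruence generated by the bilinearity relations, and functoriality by observing that $(m_1,m)\mapsto f(m_1)\otimes m$ is bilinear and invoking the universal property. Your explicit emphasis that one must quotient by a \emph{congruence} rather than a submodule is a worthwhile clarification the paper leaves implicit.
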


\begin{proof}
This is a classical theorem from universal algebra, and the proof is identical to the case of vector spaces. Uniqueness can be checked via diagram chasing. We can prove the existence by constructing $M_1\otimes M_2$ explicitly. Let us consider the free module generated by pairs $m_1\otimes m_2$ with $m_1\in M_1$ and $m_2\in M_2$, and quotienting by the congruence generated by $(m_1\plus m_1')\otimes m_2\sim m_1\otimes m_2\plus m_1'\otimes m_2$, $\lambda(m_1\otimes m_2)\sim(\lambda m_1)\otimes m_2\sim m_1\otimes(\lambda m_2)$. Naturally, any bilinear map $M_1\times M_2\to N$ extends uniquely into a map $M_1\otimes M_2\to N$.

Finally, for the functoriality, given a map $f\colon A\to B$, we need to construct $A\otimes M\to B\otimes M$. We may define the bilinear map $A\times M\to B\otimes M$ defined by $(a,m)\to f(a)\otimes m$, and this extends into the desired map. Identity and composition can be checked as usual.
\end{proof}

\begin{proposition}\label{prop:tensorsum}
$\Hom(M_1,\Hom(M_2,M_3))\cong\Hom(M_1\otimes M_2,M_3)$, $A\otimes\mathbb{F}\cong A$, $(A+B)\otimes C\cong (A\otimes C)+(B\otimes C)$.
\end{proposition}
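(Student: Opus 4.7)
The plan is to verify each of the three isomorphisms using the universal property of the tensor product together with the coproduct structure from the previous proposition; none of the arguments require any special feature of $\Finfty$ beyond what is needed in classical module theory.

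For the Hom–tensor adjunction $\Hom(M_1,\Hom(M_2,M_3))\cong\Hom(M_1\otimes M_2,M_3)$, I would construct the isomorphism in both directions. Given $\phi\in\Hom(M_1,\Hom(M_2,M_3))$, the assignment $(m_1,m_2)\mapsto\phi(m_1)(m_2)$ is $\mathbb{F}$-bilinear, since $\phi$ is a homomorphism and each $\phi(m_1)$ is a homomorphism; by the defining universal property of $M_1\otimes M_2$ it factors uniquely through a map $\tilde\phi\colon M_1\otimes M_2\to M_3$. Conversely, for $\psi\colon M_1\otimes M_2\to M_3$, the formula $m_1\mapsto(m_2\mapsto\psi(m_1\otimes m_2))$ lands in $\Hom(M_2,M_3)$ and is itself $\mathbb{F}$-linear in $m_1$, using the pointwise $\mathbb{F}$-structure on $\Hom$ from the previous proposition. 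The two constructions are mutually inverse on generators $m_1\otimes m_2$, and since such generators span $M_1\otimes M_2$, they are inverse everywhere.

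For $A\otimes\mathbb{F}\cong A$, the scalar multiplication $A\times\mathbb{F}\to A$, $(a,\lambda)\mapsto\lambda a$, is bilinear by the module axioms and hence factors through a map $\mu\colon A\otimes\mathbb{F}\to A$. In the reverse direction I take $a\mapsto a\otimes 1$, which is $\mathbb{F}$-linear. The composition $A\to A\otimes\mathbb{F}\to A$ sends $a\mapsto a\otimes 1\mapsto a$. For the other composition I use the tensor relation $\lambda(a\otimes 1)=a\otimes\lambda=(\lambda a)\otimes 1$ to see that $a\otimes\lambda\mapsto\lambda a\mapsto(\lambda a)\otimes 1=a\otimes\lambda$ on generators.

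For distributivity of $\otimes$ over coproducts, $(A+B)\otimes C\cong(A\otimes C)+(B\otimes C)$, I would build a map in each direction. The canonical inclusions $A\hookrightarrow A+B$ and $B\hookrightarrow A+B$ induce $A\otimes C\to(A+B)\otimes C$ and $B\otimes C\to(A+B)\otimes C$ by functoriality of $\otimes$, and these assemble, by the universal property of the coproduct, into a map $(A\otimes C)+(B\otimes C)\to(A+B)\otimes C$. In the other direction I define a bilinear map on $(A+B)\times C$ by sending $(a,c)\mapsto a\otimes c\in A\otimes C$, $(b,c)\mapsto b\otimes c\in B\otimes C$, and $(a\oplus b,c)\mapsto a\otimes c+b\otimes c$, where the last sum takes place inside $(A\otimes C)+(B\otimes C)$. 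The main thing to check is that this is compatible with the defining relations of the coproduct (in particular $(a\oplus b)+(a'\oplus b')=(a+a')\oplus(b+b')$) and with addition and scalar action in the first coordinate; this reduces to the distributivity of $\otimes$ over $+$ inside each tensor factor. The resulting map $(A+B)\otimes C\to(A\otimes C)+(B\otimes C)$ is inverse to the previous one, as can be checked on the generators $a\otimes c$, $b\otimes c$, and $(a\oplus b)\otimes c$.

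The only mild obstacle is the last verification, since the coproduct in this category is not simply a direct sum: elements of $A+B$ come in three explicit shapes, and one must confirm that the bilinear map just defined respects the identifications $0_A=0_B=0_A+0_B$ and the decomposition of general sums. Once this bookkeeping is done, all three isomorphisms follow from the universal properties.
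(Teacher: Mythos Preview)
Your proposal is correct and follows essentially the same approach as the paper: both arguments use the universal properties of the tensor product and the coproduct, together with the identification of bilinear maps with homomorphisms out of the tensor product. Your treatment is somewhat more explicit---you spell out the inverse maps and the bookkeeping for the three shapes of elements in $A+B$---whereas the paper dispatches each part more tersely, but the underlying ideas are identical.
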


\begin{proof}
Since elements of $\Hom(M_1,\Hom(M_2,M_3))$ correspond naturally to bilinear maps $M_1\times M_2\to M_3$, there is a natural embedding to $\Hom(M_1\otimes M_2,M_3)$. On the other hand, a map $\varphi\colon M_1\otimes M_2\to M_3$ restricts to a map $\varphi'\colon M_1\times M_2\to M_3$ that is bilinear. Therefore $\varphi'(m)$ for $m\in M_1$ is a homomorphism, and $\varphi'$ is a homomorphism from $M_1$.

The second one is trivial, since an element $a\otimes\lambda\in A\otimes\mathbb{F}$ is equal to $(\lambda a)\otimes 1$.

For the third one, there is a natural bilinear map from $(A+B)\times C$ to $A\otimes C+B\otimes C$, defined as $(a\oplus b,c)\to(a\otimes c)\oplus(b\otimes c)$, which extends to a unique map $(A+B)\otimes C\to A\otimes C+B\otimes C$. On the other hand, since $A\otimes C+B\otimes C$ is the coproduct, and there are maps from $A\otimes C$ and $B\otimes C$ to $(A+B)\otimes C$, these define a unique map $A\otimes C+B\otimes C\to(A+B)\otimes C$. It can be shown that the composition in either direction is the identity using the universality of the tensor product and the coproduct.
\end{proof}

\begin{remark}
Tensor products are in fact defined in \cite{durov2007}, \textbf{5.3.5} using adjunction.
\end{remark}

\begin{definition}
Let $M$ be an $\mathbb{F}$-module. Then $\bigotimes^n M$ or $M^{\otimes n}$ denotes the tensor product $M\otimes\dots\otimes M$, and $\bigotimes^0 M=\mathbb{F}$. The congruence generated by
$\mathbf{a}_1\otimes a_2\otimes a_3\otimes \mathbf{a}_4\sim\mathbf{a}_1\otimes a_3\otimes a_2\otimes \mathbf{a}_4$
for $\mathbf{a}_1\in M^{\otimes n_1}$, $\mathbf{a}_4\in M^{\otimes n_2}$ for $n_1+n_2+2=n$ and $a_2$, $a_3\in M$ is the kernel of the surjective map $M^{\otimes n}\to\Sym^n M$ that defines the \textbf{symmetric power} of $M$. Furthermore, there are natural maps $\bigotimes^{n_1}M\otimes\bigotimes^{n_2}M\to\bigotimes^{n_1+n_2}M$ and $\Sym^{n_1}M\otimes\Sym^{n_2}M\to\Sym^{n_1+n_2}M$.

Given a module $M$, the \textbf{tensor algebra} and the \textbf{symmetric ring} of $M$ are \textbf{graded rings} defined as $\bigotimes M:=\sum_{n=0}^\infty \bigotimes^n M$ and $\Sym M:=\sum_{n=0}^\infty \Sym^n M$, respectively, and multiplication is given by the above maps.
\end{definition}

\begin{remark}
In \cite{durov2007}, the tensor algebra and symmetric ring are defined identically, in \textbf{5.3.18} and \textbf{5.3.19}, respectively.
\end{remark}

\smallskip

To do algebraic geometry, we need to construct the coordinate ring of an affine space. First let us introduce the polynomial ring in $n$ variables, following the definition in \cite{durov2007}, \textbf{5.3.22}.

\begin{definition}
\label{def:freealg}
The \textbf{polynomial ring} in $n$ variables $x_1$, {\dots}, $x_n$ over $\mathbb{F}$, denoted by $\mathbb{F}[x_1,\dots,x_n]$, is
defined as the symmetric product $\Sym{\cal F}(\{x_1,\dots,x_n\})$.
\end{definition}

\begin{lemma}
The polynomial ring generated by $\{x_1,\dots,x_n\}$ is isomorphic to
$$\sum_{\mu\colon\{1,\dots,n\}\to \mathbb{N}} (x_1^{\mu_1}\cdot\dots\cdot x_n^{\mu_n})\mathbb{F},$$
as a module, and the ring structure may be defined as the extension of multiplication of monomials $x_1^{\mu_1}\cdot\dots\cdot x_n^{\mu_n}$.
\end{lemma}

\begin{proof}
First let us write $M={\cal F}(\{x_1,\dots,x_n\})$ as the coproduct $\sum_{i=1}^n x_i\mathbb{F}$.
By \ref{prop:tensorsum}, we may write $\bigotimes^k M$ as $\sum_{\sigma\in S} (x_{\sigma(1)}\otimes\dots\otimes x_{\sigma(k)})\mathbb{F}$ where $S$ is the set of maps from $\{1,\dots,k\}$ to $\{1,\dots,n\}$.
By symmetrizing, we get that $\Sym M$ is isomorphic to the above description.
\end{proof}

\begin{proposition}
\label{prop:freealg}
The polynomial ring in $n$ variables is the \textbf{free object} generated by $\{x_1,\dots,x_n\}$ in the category of $\mathbb{F}$-algebras. Then every finitely generated algebra is the quotient of a polynomial ring.
\end{proposition}

\begin{proof}
Proving this universal property for the polynomial ring amounts to showing that given an algebra $A$ with certain elements $a_i\in A$ for $1\le i\le n$, there is a unique map $\mathbb{F}[x_1,\dots,x_n]\to A$ that sends $x_i$ to $a_i$.

Every monomial of the form $x_1^{\mu_1}\cdot\dots\cdot x_n^{\mu_n}$ has a well-defined image. Since the polynomial ring is the category theoretical coproduct of the modules $(x_1^{\mu_1}\cdot\dots\cdot x_n^{\mu_n})\mathbb{F}$, each freely generated by a monomial, maps from these modules define a (unique) map from the polynomial ring. Since this map preserves multiplication on monomials, it preserves all products.

For the second part of the theorem, we may send the variables $x_i$ to the generators of the algebra.
\end{proof}

Then we may define the affine space as the set of homomorphisms from the polynomial ring to the underlying field, analoguously with classical fields. Note that this is not what we will refer to as the algebraic variety, since it is not a \emph{spectrum} of a ring.

\begin{definition}
The \textbf{affine space} of dimension $n$, $\mathcal{A}^n$, is the set consisting of ring homomorphisms from $\mathbb{F}[x_1,\dots,x_n]$ to $\mathbb{F}$. We will call $\mathbb{F}[x_1,\dots,x_n]$ the \textbf{ring of functions} of $\mathcal{A}^n$.
\end{definition}

\begin{lemma}
The points of the affine space $\mathcal{A}^n$ are in a bijection with the elements of the module $A:=\prod_{i=1}^n\mathbb{F}=\mathbb{F}\times\dots\times\mathbb{F}$, and the homogeneous degree $1$ elements of the polynomial ring $\mathbb{F}[x_1,\dots,x_n]$ are in a canonical bijection with $A^*:=\Hom(A,\mathbb{F})$.
\end{lemma}

\begin{proof}
A map $\mathbb{F}[x_1,\dots,x_n]\to\mathbb{F}$ is defined by giving the images of the variables $x_1$, {\dots}, $x_n$, since the polynomial ring is the free object. If the images are denoted by $\varepsilon_1$, {\dots}, $\varepsilon_n$, this gives an element $(\varepsilon_1,\dots,\varepsilon_n)\in A$, and since the $\varepsilon_i$ may be arbirtary, this defines the bijection.

The degree $1$ part of $\mathbb{F}[x_1,\dots,x_n]$ is isomorphic to the free coproduct of modules $\sum_{i=1}^n x_i\mathbb{F}$. To see that its elements give linear maps on the module $A$, it is sufficient to check for the generators of this module, $f=x_i$. In fact, it is enough to check for a single variable $f=x_1$. Consider two points $p$, $q\in\mathcal{A}^n$, identified by $\check p:=(p(x_1),\dots,p(x_n))$ and $\check q:=(q(x_1),\dots,q(x_n))\in A$. Their images under $x_1$ are $p(x_1)$ and $q(x_1)$. Given a linear combination $\lambda\check p\plus\mu\check q\in A$, its image is $\lambda p(x_1)\plus\mu q(x_1)$ under $x_1$, since $\plus$ is evaluated coordinate-wise in the product $A$. This is equal to the linear combination of the images.
Later we will prove \ref{prop:proddual}, and as a consequence the finite module $A^*$ is isomorphic to the free coproduct $\sum_{i=1}^n \mathbb{F}$, and it is in fact generated by the linear functions $x_i$.
\end{proof}

As is usual in the theory of classical fields, any vector space can be given a natural affine space structure by forgetting the origin of the space.
Unfortunately, contrary to the theory of classical fields, the $n$-dimensional module is not unique.
However, since any finitely generated ring is a quotient of a polynomial ring, the different spaces all arise as affine subvarieties of $\mathcal{A}^n$, and we may study them as such.
Nevertheless, we will present a general definition for the coordinate ring of such spaces arising from a module, and we will call them \emph{affine cones}.

With the symmetric product, we may define the ring of functions over an affine cone arising from a module $M$. We want the homogeneous linear functions on this affine cone to be isomorphic to $M^*$, as was the case for $M=\prod_{i=1}^n\mathbb{F}$, which motivates our following definition.

\begin{definition}
The \textbf{ring of functions} over $M$, as an affine cone, is defined as $\mathbb{F}[M]:=\Sym M^*$. Then the associated \textbf{affine cone}, denoted by $|M|$, is the set consisting of ring homomorphisms $\mathbb{F}[M]\to\mathbb{F}$.
\end{definition}

\begin{example}
For instance, consider the module $M$ generated by $u$, $v$, such that $u+v=u-v=0$. This is a module consisting of $5$ elements: $\{0,u,v,-u,-v\}$. Then the ring of functions is isomorphic to $\mathbb{F}[x,y]/\{x+y\sim 0,x-y\sim 0\}$. It is a simple verification that $|M|$ is in a bijection with $M$, hence it gives rise to an affine cone that is not isomorphic to $\mathcal{A}^n$.

\vspace{-1cm}
\hspace{0.2cm}
\definecolor{qqqqff}{rgb}{0.,0.,1.}
\definecolor{cqcqcq}{rgb}{0.7529411764705882,0.7529411764705882,0.7529411764705882}
\begin{tikzpicture}[line cap=round,line join=round,>=triangle 45,x=1.0cm,y=1.0cm]

\clip(0.8619601995019549,0.15443313252972632) rectangle (12.240214382225686,9.355124876835639);
\begin{scriptsize}
\draw [fill=qqqqff] (6.,5.) circle (2.5pt);
\draw[color=qqqqff] (6.13911601873072,5.362927073069641) node {$0$};
\draw [fill=qqqqff] (4.,5.) circle (2.5pt);
\draw[color=qqqqff] (4.1376110360467013,5.362927073069641) node {$-u$};
\draw [fill=qqqqff] (8.,5.) circle (2.5pt);
\draw[color=qqqqff] (8.140621001414738,5.362927073069641) node {$u$};
\draw [fill=qqqqff] (6.,7.) circle (2.5pt);
\draw[color=qqqqff] (6.13911601873072,7.36443205575366) node {$v$};
\draw [fill=qqqqff] (6.,3.) circle (2.5pt);
\draw[color=qqqqff] (6.13911601873072,3.3614220903856222) node {$-v$};
\end{scriptsize}
\end{tikzpicture}
\end{example}

\vspace{-2cm}

Note that contrary to the case of classical rings, an affine cone based on a module \textbf{preserves its origin}: any non-constant function of $\Sym M^*$ evaluates to $0$ in the origin, while as we will see in later chapters, there is at least one linear function for every other point that does not send it to $0$.

\section{Ordered structure}

Recall from Proposition \ref{thm:natorder} that an $\mathbb{F}$-module for a given $\Finfty$-field $\mathbb{F}$ has a natural partial order. It has many of the usual properties of an ordered algebraic structure:

\begin{proposition}
Given an $\mathbb{F}$-module $M$ and elements $a$, $b$, $c\in M$ such that $a\le b$, we have $a\plus c\le b\plus c$ and $ac\le bc$, and if $c\le d\in M$, we have $a\plus c\le b\plus d$ and $ac\le bd$. In particular, if $a\le b$ then $-a\le -b$ as well. Also, if $a\le b$ and $a\le c$ then $a\le b\plus c$.
\end{proposition}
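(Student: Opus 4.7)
The plan is to unpack each inequality into its defining equation $x\le y \iff x+y=x$ and then verify it using the module axioms (associativity, commutativity, idempotence, compatibility of $-$ with $+$) together with distributivity for the multiplicative statements. None of the steps should require more than a short manipulation.

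First I would handle additive monotonicity: assume $a\le b$, i.e.\ $a+b=a$. Then
\[
(a+c)+(b+c) = (a+b)+(c+c) = a+c,
\]
using commutativity/associativity and $c+c=c$, so $a+c\le b+c$. For the two-sided version, if also $c\le d$, then by what we just showed $b+c\le b+d$, and transitivity (already established in the proof of Proposition \ref{thm:natorder}) gives $a+c\le b+d$.

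Next I would do the multiplicative monotonicity, which is even shorter: from $a+b=a$ and distributivity, $ac+bc = (a+b)c = ac$, so $ac\le bc$. For $a\le b$, $c\le d$ we combine $ac\le bc$ with $bc\le bd$ via transitivity. For the sign change, applying $-$ to $a+b=a$ and using $-(a+b) = (-a)+(-b)$ gives $(-a)+(-b) = -a$, i.e.\ $-a\le -b$. Finally, for the last assertion, from $a\le b$ and $a\le c$ we compute
\[
a+(b+c) = (a+b)+c = a+c = a,
\]
so $a\le b+c$.

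There is no real obstacle here; the only thing to be mildly careful about is that the multiplicative statements implicitly require $M$ to carry a compatible multiplication (so the statement should be read as applying to an $\mathbb{F}$-algebra for the product claims, while the additive claims hold for any $\mathbb{F}$-module). Everything else is a direct consequence of the axioms and Proposition \ref{thm:natorder}.
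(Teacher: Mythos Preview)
Your proposal is correct and is precisely the kind of direct unpacking the paper has in mind; the paper's own proof is the single sentence ``These are elementary consequences of the definition, the idempotence of additivity and distributivity,'' and your computations are exactly those elementary consequences written out. Your caveat that the product statements tacitly require a multiplication on $M$ (i.e.\ an algebra structure, or scalar multiplication if $c\in\mathbb{F}$) is a fair observation about the statement rather than a flaw in your argument.
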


\begin{proof}
These are elementary consequences of the definition, the idempotence of additivity and distributivity.
\end{proof}

Such a module is in fact a \textbf{semilattice} with respect to the meet-operation, $\plus_2$. It has clearly no lattice structure, since if $a$, $-a\le x$ for some $x$ and $a\ne0$, we would have $a\le -x$ and $a\le x\plus -x=0$, a contradiction. This can be salvaged by the introduction of a largest element.

\begin{definition}
For an $\mathbb{F}$-module $M$, we will denote by $\overline M$ the set $\{\infelem\}\cup M$ where $\infelem>a$ for all $a\in M$, and call it the \textbf{order closure} or \textbf{closure} of $M$. We may extend the operations as partial operations via $a\plus \infelem=a$ and $\lambda\infelem=\infelem$ for $\lambda\ne0$. $0\infelem$ is undefined.
\end{definition}

\begin{definition}
An $\mathbb{F}$-module $M$ is called a \textbf{lattice} if the order closure $\overline M$ of $M$ has a lattice structure. This means that for any $a$, $b\in M$, there is an upper bound $a\cup b\in\overline M$.
\end{definition}

\begin{theorem}
\label{thm:finislat}
A finitely generated $\mathbb{F}$-module $M$ is a lattice.
\end{theorem}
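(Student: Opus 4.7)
The plan is to show that $\overline M = M \cup \{\infelem\}$ has joins for any finitely generated $\mathbb{F}$-module $M$. First observe that $M$ is a meet-semilattice under $+$, and that for any $a, b \in M$ the set of upper bounds $U_{a,b} = \{z \in M : a, b \le z\}$ is closed under $+$: if $u_1, u_2 \in U_{a,b}$ then $a + (u_1 + u_2) = (a + u_1) + u_2 = a + u_2 = a$, giving $u_1 + u_2 \ge a$, and likewise $\ge b$. So it suffices to show that either $U_{a,b}$ is empty (in which case $a \cup b = \infelem$) or $U_{a,b}$ has a minimum (which is then $a \cup b$).

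My strategy is to realize $M$ as a quotient of the free module $F = \mathbb{F}^n$, where $n$ is the number of generators, and transfer the lattice structure. For $F$, each element has a canonical representation $x = \sum_{i \in S_x} \lambda_i s_i$ over a support $S_x \subseteq \{1, \dots, n\}$ with $\lambda_i \in \mathbb{F}^\times$. A direct analysis of the addition in the coproduct shows that $x \le z$ in $F$ is equivalent to $S_z \subseteq S_x$ together with $x(i) \le z(i)$ in $\mathbb{F}$ for every $i \in S_z$. This gives an explicit formula for the join in $\overline F$: setting $T = \{i \in S_x \cap S_y : x(i) \cup_\mathbb{F} y(i) \in \mathbb{F}\}$, if $T$ is empty then $x \cup y = \infelem$, otherwise $x \cup y = \sum_{i \in T}(x(i) \cup_\mathbb{F} y(i)) s_i$. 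The $n = 1$ case is precisely the statement that $\mathbb{F}$ itself is a lattice, which can be checked directly from the structure of an $\mathbb{F}_\infty$-division algebra and is needed to interpret the coordinate-wise joins.

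To pass from $F$ to the quotient $M = F/\!\sim$, I would use that $\pi \colon F \twoheadrightarrow M$ is a semilattice homomorphism, hence order-preserving. For any preimages $x \in \pi^{-1}(a)$ and $y \in \pi^{-1}(b)$, the image $\pi(x \cup y) \in \overline M$ belongs to $U_{a,b} \cup \{\infelem\}$. I would then produce the minimum of $U_{a,b}$ as a finite meet of such elements $\pi(x \cup y)$, organized by support pattern: since there are only $2^n$ possible supports of preimages in $F$ and the coefficient constraints within each support are controlled by the meet structure of $\mathbb{F}$, the set of upper bounds splits into finitely many combinatorial types, each contributing a representative whose meet yields the desired minimum.

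The main obstacle is precisely this descent step, because quotient maps need not preserve joins, so no single $\pi(x \cup y)$ is automatically equal to $a \cup b$ in $M$. The delicate point is showing that, even when $\mathbb{F}$ is infinite and preimage sets are infinite, the finite support combinatorics in $F$ together with the lattice structure of $\mathbb{F}$ suffice to realize the minimum of $U_{a,b}$ as a finite meet over support-distinguished representatives.
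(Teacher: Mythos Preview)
Your descent step is where the argument stalls, and you essentially acknowledge this yourself: you call it ``the main obstacle'' and ``the delicate point'' without actually carrying it out. The claim that the minimum of $U_{a,b}$ can be realized as a finite meet of elements $\pi(x\cup y)$, organized by support type, is not justified. An upper bound $d$ of $a,b$ in $M$ need not lift to an upper bound of any chosen preimages $x,y$ in $F$ (you only know $\pi(x),\pi(y)\le d$ in $M$, not $x,y\le\tilde d$ in $F$), so there is no reason every element of $U_{a,b}$ dominates your proposed finite meet. Congruences on $F$ can be complicated enough that the $2^n$ support patterns do not control this.

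The paper's proof avoids the free module entirely and is much shorter. The observation you are missing is this: if $d\ge a,b$ in $M$ and you write $d$ as a finite sum of generators, $d=\sum_{x\in G(d)}x$ with $G(d)\subseteq G$, then each individual summand $x$ already satisfies $x\ge d$ (since $d+x=d$ by idempotence), hence $x\ge a,b$. Thus every generator appearing in any upper bound lies in the set $\{x\in G:x\ge a,\,b\}$, and therefore
\[
a\cup b\;=\;\sum_{\substack{x\in G\\ x\ge a,\,b}} x
\]
is the least upper bound: it is an upper bound because each term is, and any $d\in U_{a,b}$ is a subsum of it, hence $\ge a\cup b$. This is a single finite meet computed directly in $M$, with no quotient map and no descent. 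Your opening observation that $U_{a,b}$ is closed under $+$ is correct and is exactly what is being used here, but the detour through $F$ only manufactures the obstacle you then cannot overcome.
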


\begin{proof}
If $M$ is finitely generated, then there is a finite set $G$ of generators, and the sum $a\cup b:=\sum\limits_{\substack{x\in G\\ x\ge a,b}} x$ is finite and well-defined. Now assume that $d\ge a$, $b$. Since $M$ is finitely generated, there is a finite subset $G(d)\subseteq G$ such that $d=\sum_{x\in G(d)}x$. Then $x\ge a$, $b$ for all $x\in G(d)$, hence $d\ge a\cup b$. Therefore $a\cup b$ is a lower bound to $d$.
\end{proof}

The lattice structure permits us to construct a \textbf{dual lattice}, one where $\plus $ and $\cup$ switch places. For finite modules, it turns out that the dual lattice has a natural algebraic meaning: the module of linear functions to $\mathbb{F}$. This can be expressed through the natural duality.

\begin{definition}
For an $\mathbb{F}$-module $M$, there is a \textbf{natural duality} $(\cdot,\cdot)\colon (M\setminus\{0\})\times M\to\mathbb{F}$ defined as $(a,b)=\varepsilon$ if $b\ge \varepsilon a$, and $0$ if no such $\varepsilon$ exists.
We extend it to $(a,b)\colon\overline M\times\overline M\setminus\{(0,0),(\infelem,\infelem)\}\to\overline{\mathbb{F}}$, defined via $(\infelem,a)=0$, $(a,\infelem)=\infelem$, $(a,0)=\infelem$.
\end{definition}

\begin{proposition}
The natural duality on $M$ is well defined, and in particular, the map $a^*:=(a,\cdot)$ is a homomorphism from $M$ to $\mathbb{F}$ for all $a\in \overline M\setminus\{0\}$. Also, $(\mu a,b)=\mu^{-1}(a,b)$ for $\mu\in\mathbb{F}^\times$.
\end{proposition}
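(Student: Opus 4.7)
The proposition decomposes into three statements: well-definedness of $(a, b)$, the scaling identity $(\mu a, b) = \mu^{-1}(a, b)$, and the homomorphism property of $a^*$. I would prove them in that order.

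For well-definedness the only substantive content is uniqueness of $\varepsilon$ when it is non-zero. Suppose $\varepsilon, \varepsilon' \in \mathbb{F}^\times$ both satisfy $\varepsilon a \le b$ and $\varepsilon' a \le b$. Multiplying these inequalities by the invertible scalars $\varepsilon^{-1}$ and $\varepsilon'^{-1}$ respectively, and using the monotonicity of multiplication asserted at the start of the section, yields $a \le \varepsilon^{-1} b$ and $a \le \varepsilon'^{-1} b$. From $a \le x$ and $a \le y$ one computes $a + (x + y) = (a + x) + y = a$, hence $a \le x + y$; applied here this gives $a \le (\varepsilon^{-1} + \varepsilon'^{-1}) b$. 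In a finite division algebra the corollary at the start of Section~\ref{sec:mod} yields $\varepsilon^{-1} + \varepsilon'^{-1} = 0$ for distinct non-zero inputs, so $a \le 0$ and hence $a = 0$, contradicting $a \ne 0$. The extended boundary assignments $(\infelem, a) = 0$, $(a, \infelem) = \infelem$, and $(a, 0) = \infelem$ are stipulated by definition and their mutual consistency is verified directly from the extended operations.

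The scaling identity is immediate from unwinding definitions: $(\mu a, b) = \delta$ means $b \ge \delta(\mu a) = (\delta\mu) a$, which is the defining condition for $(a, b) = \delta\mu$; solving yields $\delta = \mu^{-1}(a, b)$.

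For the homomorphism claim I would verify additivity and $\mathbb{F}$-linearity of $a^*$ separately. Linearity $a^*(\mu b) = \mu\, a^*(b)$ follows by the same manipulation as the scaling identity. For additivity, the key observation is that $b_1 + b_2 \le b_i$ in the natural order for both $i$ (since $+$ is a meet operation), so any non-zero $\varepsilon$ with $\varepsilon a \le b_1 + b_2$ must satisfy $\varepsilon a \le b_i$ for both $i$; by well-definedness this forces $a^*(b_1) = a^*(b_2) = \varepsilon$. Conversely, if $a^*(b_1) = a^*(b_2) = \varepsilon \ne 0$, then $\varepsilon a \le b_1$ and $\varepsilon a \le b_2$ combine to $\varepsilon a \le b_1 + b_2$, giving $a^*(b_1 + b_2) = \varepsilon$. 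Splitting into the cases in which the two $a^*(b_i)$ coincide and are non-zero, differ, or involve a zero, and invoking the $\mathbb{F}$-level identities $\varepsilon + \varepsilon = \varepsilon$, $\varepsilon + \varepsilon' = 0$ for distinct non-zero scalars, together with the absorbing property of $0$, one matches $a^*(b_1 + b_2)$ with $a^*(b_1) + a^*(b_2)$ in every case. The hard part will be the well-definedness step for an infinite $\mathbb{F}$ with non-trivial order, where $\varepsilon^{-1} + \varepsilon'^{-1}$ need not vanish; uniqueness there must be recovered via the specific $\mathbb{F}$-order, or the statement restricted to finite $\mathbb{F}$.
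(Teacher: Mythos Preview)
The paper states this proposition without proof, evidently regarding it as a routine verification. Your argument supplies exactly that verification and is correct in the regime where it applies: for $\mathbb{F}$ in which distinct non-zero elements are unordered (hence sum to $0$), in particular for any finite $\mathbb{F}$ and for $\Finfty$ itself. Your decomposition into well-definedness, scaling, and the homomorphism property is the natural one, and each step is sound.

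Your closing caveat is not a defect in your argument but a genuine observation about the statement. For an $\mathbb{F}$ with non-trivial order on $\mathbb{F}^\times$---such as the paper's own example $\{\pm x^i\mid i\in\mathbb{Z}\}\cup\{0\}$ with $x^i+x^j=x^{\max(i,j)}$---the duality as literally defined is \emph{not} single-valued: taking $M=\mathbb{F}$, $a=b=1$, the condition $b\ge\varepsilon a$ holds for every $\varepsilon\le 1$, so $(a,b)$ has no canonical value. The paper does not address this, and since it offers no proof there is nothing to compare on this point; presumably the intended hypothesis is that $\mathbb{F}^\times$ is an antichain (as in all the finite examples and in $\overline{\Finfext{\infty}}$), or else the duality is only meant to be used in the subsequent finite-module and filter-topological settings where the issue is sidestepped. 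You have correctly identified a gap in the paper's formulation rather than introduced one of your own.
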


\begin{proposition}
\label{thm:findual}
If $M$ is a finite module, then there is a natural order-reversing bijection between $\overline M$ and $\overline{M^*}$ where $M^*:=\Hom(M,\mathbb{F})$, and the addition on $M^*$ is given by $\cup$.
\end{proposition}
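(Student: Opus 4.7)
The plan is to construct the bijection $\Phi\colon\overline{M}\to\overline{M^*}$ explicitly, using the natural duality as the main ingredient. On $M\setminus\{0\}$ set $\Phi(a):=a^*=(a,\cdot)$, and extend by $\Phi(0):=\infelem$ (the top of $\overline{M^*}$, i.e.\ the constant $\infelem$-valued function) and $\Phi(\infelem):=0$ (the zero homomorphism). The preceding proposition guarantees each $a^*$ is a nonzero element of $M^*$, so $\Phi$ is well defined. Injectivity is immediate from $a^*(a)=1$: if $a^*=b^*$ then $b^*(a)=1$ forces $a\ge b$, and symmetrically $b\ge a$, hence $a=b$.

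For surjectivity I would produce an explicit inverse. Given a nonzero $\phi\in M^*$, form the finite meet
$$a_\phi\;:=\;\sum_{x\in\phi^{-1}(1)}x\;=\;\sum_{\substack{x\in M\\ \phi(x)\ne 0}}\phi(x)^{-1}x,$$
well defined in the lattice $\overline M$ by Theorem~\ref{thm:finislat}. Since $\phi$ preserves $+$ (which is the lattice meet), $\phi(a_\phi)=\sum_{x\in\phi^{-1}(1)}1=1$ by idempotence of addition in $\mathbb{F}$; in particular $a_\phi\ne 0$. To verify $a_\phi^*=\phi$ at $y\in M$: when $\phi(y)\ne 0$, $a_\phi\le\phi(y)^{-1}y$ gives $\phi(y)$ as a valid witness for $(a_\phi,y)$, and any distinct nonzero witness $\eta$ would force $a_\phi\le(\phi(y)^{-1}+\eta^{-1})y=0$ by distributivity (using that distinct nonzero scalars of $\mathbb{F}$ sum to $0$), contradicting $a_\phi\ne 0$. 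When $\phi(y)=0$, a hypothetical witness $\eta\ne 0$ would give $y\ge\eta a_\phi$, hence $\phi(y)\ge\eta\phi(a_\phi)=\eta\ne 0$ since homomorphisms preserve $\le$, again a contradiction.

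The structural identity $\Phi(a\cup b)=\Phi(a)+\Phi(b)$ is then verified by direct evaluation: scalar multiplication by $\varepsilon\in\mathbb{F}^\times$ is a lattice automorphism of $\overline M$, so $(a\cup b)^*(c)=\varepsilon$ iff $c\ge\varepsilon a$ and $c\ge\varepsilon b$, iff $a^*(c)=b^*(c)=\varepsilon$, matching $(a^*+b^*)(c)$ by the addition rule in $\mathbb{F}$. The order-reversing property follows at once, since $a\le b$ iff $a\cup b=b$ iff $\Phi(a)+\Phi(b)=\Phi(b)$ iff $\Phi(b)\le\Phi(a)$ in the natural order on $M^*$. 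The main obstacle lies in the surjectivity step, specifically in establishing $\phi(a_\phi)=1$ (whence $a_\phi\ne 0$) and in ruling out stray nonzero witnesses for $(a_\phi,y)$; both arguments pivot on the combinatorics of the finite lattice $\overline M$ combined with the unordered structure of $\mathbb{F}^\times$, whereas the remainder of the correspondence is essentially formal.
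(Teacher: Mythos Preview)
Your argument follows the same route as the paper's: map $a\mapsto a^*$ via the natural duality, and for the inverse send a nonzero $\phi$ to the sum $\sum\{c\mid\phi(c)=1\}$. The paper's proof is a single sentence (``the sum $F=\sum\{c\mid f(c)=1\}$ is well-defined, and $F^*=f$''); you have supplied the verification the paper omits, including injectivity, the order-reversal, and the identification of $\cup$ with pointwise addition on $M^*$.

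One caveat: your uniqueness argument for the witness in the case $\phi(y)\ne 0$ invokes the claim that ``distinct nonzero scalars of $\mathbb{F}$ sum to $0$.'' This is true for $\Finfty$ and the $\Finfext{k}$, but it fails for general $\Finfty$-division algebras---e.g.\ in the ordered field $\{\pm x^i\mid i\in\mathbb{Z}\}\cup\{0\}$ from the paper's examples one has $1+x=x\ne 0$. You do not actually need this claim: the preceding proposition already asserts that the natural duality is well defined, which is precisely the statement that at most one $\varepsilon\in\mathbb{F}^\times$ satisfies $y\ge\varepsilon a_\phi$. Citing that proposition removes the gap. (The same remark applies to your verification of $\Phi(a\cup b)=\Phi(a)+\Phi(b)$, where the step ``iff $a^*(c)=b^*(c)=\varepsilon$'' again relies on uniqueness of the witness.)
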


\begin{proof}
Any element $a\in \overline M\setminus\{0\}$ gives a natural map $a^*\colon M\to\mathbb{F}$. Consider a function $f\colon M\to\mathbb{F}$ that is not trivially zero. Then the sum $F=\sum\{c\mid f(c)=1\}$ is well-defined, and $F^*=f$.
\end{proof}

\smallskip

Now let us look at how to define dual modules for infinite modules. The following propositions show the na\"\i ve way of looking at homomorphic maps to the base field.

\begin{proposition}
For a given $\mathbb{F}$-module $M$, homomorphic maps $f\colon M\to \mathbb{F}$ are in a bijection with filters $F$, given by $a\in F\Leftrightarrow f(a)=1$.
\end{proposition}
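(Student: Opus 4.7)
The plan is to construct the inverse map explicitly and verify bijectivity in both directions. I would take a \emph{filter} on an $\mathbb{F}$-module $M$ to mean a non-empty subset $F \subseteq M$ such that $0 \notin F$, $F$ is closed under $+$, and $F$ is upward-closed in the natural order of Proposition~\ref{thm:natorder}. Given a homomorphism $f$, set $F_f := f^{-1}(1)$; given a filter $F$, I would first observe that each orbit $\mathbb{F}^\times a$ meets $F$ in at most one point (if $a, \lambda a \in F$ with $\lambda \ne 1$, then closure forces $(1+\lambda)a = 0 \in F$, using that $1 + \lambda = 0$ in $\mathbb{F}$, contradicting $0 \notin F$), and then define $f_F\colon M \to \mathbb{F}$ by $f_F(c) = \lambda$ whenever $\lambda^{-1} c \in F$, and $f_F(c) = 0$ otherwise.

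For the forward direction, the essential arithmetic fact in the finite division algebra $\mathbb{F}$, from the corollary on finite division algebras together with the absorbing identity $\lambda + 0 = 0$, is that for $\lambda \in \mathbb{F}^\times$ the equation $\lambda + x = \lambda$ forces $x = \lambda$. Applied to $f$: closure of $F_f$ under $+$ follows from $1+1=1$; upward closure follows since $a \in F_f$ and $a \le b$ give $1 = f(a) = f(a+b) = 1 + f(b)$, forcing $f(b) = 1$; and $0 \notin F_f$ is immediate from $f(0) = 0$.

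Next I would verify $f_F$ is a homomorphism. Scalar compatibility $f_F(\lambda a) = \lambda f_F(a)$ and $f_F(0) = 0$ are immediate from the definition and from $0 \notin F$, so the substance is in $f_F(a+b) = f_F(a) + f_F(b)$. When $f_F(a) = f_F(b) = \lambda \ne 0$, closure under $+$ yields $\lambda^{-1}(a+b) = \lambda^{-1}a + \lambda^{-1}b \in F$ directly, so both sides equal $\lambda$. In every other case --- at least one of $f_F(a), f_F(b)$ being $0$, or both being distinct nonzero scalars --- the sum $f_F(a) + f_F(b)$ equals $0$ in $\mathbb{F}$, and I must rule out $\nu^{-1}(a+b) \in F$ for any $\nu \in \mathbb{F}^\times$. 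The argument is that $\nu^{-1}(a+b) = \nu^{-1}a + \nu^{-1}b$ is a meet in the semilattice, hence lies below both $\nu^{-1}a$ and $\nu^{-1}b$; upward closure then forces both into $F$, contradicting the hypothesis on $f_F(a)$ or $f_F(b)$.

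The hard part will be exactly this cross-case, where cancellation in $\mathbb{F}$ must be matched by absence of scalar multiples of $a+b$ from $F$; it is the sole place where the upward-closure axiom is used in an essential way, and the interplay of the three filter axioms is tuned precisely to the three arithmetic regimes $1+1=1$, $1+(-1)=0$, $1+0=0$ in $\mathbb{F}$. Once $f_F$ is confirmed to be a homomorphism, the round-trip identities $F_{f_F} = F$ and $f_{F_f} = f$ unwind directly from the definitions, completing the bijection.
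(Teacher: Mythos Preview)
The paper states this proposition without proof, so there is nothing to compare against; your argument is the right one and fills the gap correctly. The two directions are set up exactly as they must be, and your case analysis for additivity of $f_F$ is clean: the key step is indeed that $\nu^{-1}(a+b)\le \nu^{-1}a,\nu^{-1}b$ together with upward closure forces both summands into $F$, and this is where all three filter axioms pull their weight.

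Two small points. First, you require filters to be non-empty, but the paper does not: the very next proposition sets $0_{M^F}=\emptyset$, and on the homomorphism side the zero map $f\equiv 0$ is a perfectly good module homomorphism with $f^{-1}(1)=\emptyset$. Drop ``non-empty'' and the bijection extends to this degenerate pair with no change to your argument. Second, the paper's explicit definition of a filter (which appears only later, in Section~4) records just upward closure and $0\notin F$, omitting closure under $+$. Your definition \emph{with} meet-closure is the standard semilattice notion and is the one that actually makes the bijection hold---without it, the step $f_F(a)+f_F(b)=1+1=1\Rightarrow f_F(a+b)=1$ fails---so you have chosen correctly, but it is worth flagging the discrepancy. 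Finally, your argument silently uses that in $\mathbb{F}$ one has $\lambda+\mu=0$ whenever $\lambda\ne\mu$; this is the content of the corollary on finite division algebras, and the proposition as stated appears to presuppose that $\mathbb{F}$ has this trivial additive order.
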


\begin{proposition}
The set of filters $F$ on a given $\mathbb{F}$-module $M$ form an $\mathbb{F}$-module $M^F$, given by $0_{M^F}=\emptyset$, $F_1\plus F_2=F_1\cap F_2$ and $\varepsilon\cdot F=\{\varepsilon^{-1} a\mid a\in F\}$ for $\varepsilon\in \mathbb{F}^{\times}$.
\end{proposition}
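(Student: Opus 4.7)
My plan is to verify the $\mathbb{F}$-module axioms directly on filters, using only the structural facts that a filter is upward-closed in the natural order of $M$, closed under $+$, and proper ($0 \notin F$). The argument will split into three blocks: closure of the operations, the purely set-theoretic axioms, and scalar distributivity — where the real work will sit.

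First I would check that the proposed operations stay inside the class of filters. The intersection $F_1 \cap F_2$ is immediately upward-closed and $+$-closed, and avoids $0$ since neither $F_i$ contains it. For $\varepsilon \cdot F$, the map $a \mapsto \varepsilon^{-1} a$ is an order-automorphism of $M$ commuting with $+$ by distributivity, so it carries filters to filters. After that, commutativity, associativity and idempotence of $+_{M^F}$ come for free from the corresponding properties of set-intersection, with $\emptyset$ absorbing. The identity $F + (-F) = \emptyset$ is the first place where the filter axioms really bite: if $a \in F \cap (-F)$, then both $a$ and $-a$ lie in $F$, so $a + (-a) = 0 \in F$ by $+$-closure, contradicting propriety. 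The equalities $-(F_1 + F_2) = (-F_1) + (-F_2)$ and $-(-F) = F$ reduce to $-(-a) = a$, while $1 \cdot F = F$, $(\varepsilon\mu) F = \varepsilon(\mu F)$, and $\varepsilon(F_1 \cap F_2) = \varepsilon F_1 \cap \varepsilon F_2$ follow directly from the defining formula together with bijectivity of $a \mapsto \varepsilon^{-1} a$.

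The main obstacle will be the scalar distributivity $(\varepsilon + \mu) F = \varepsilon F + \mu F$, which genuinely mixes the additive and multiplicative structures of $\mathbb{F}$. I would split into two cases. If $\varepsilon + \mu = 0$, the target is $\varepsilon F \cap \mu F = \emptyset$: any common element $a$ would give $\varepsilon a, \mu a \in F$, hence $(\varepsilon + \mu) a = 0 \in F$ by $+$-closure, a contradiction. If instead $\eta := \varepsilon + \mu \in \mathbb{F}^{\times}$, the target equality $\eta F = \varepsilon F \cap \mu F$ splits into two inclusions. The direction $\supseteq$ is pure $+$-closure: $\varepsilon a, \mu a \in F$ forces $\eta a = \varepsilon a + \mu a \in F$. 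The reverse inclusion is where upward-closure becomes indispensable: from $\eta a \in F$ and $\eta a = \varepsilon a + \mu a \leq \varepsilon a$ in the meet-order, one concludes $\varepsilon a \in F$ by upward-closure, and symmetrically $\mu a \in F$. To finish, the axiom $0 \cdot F = 0$ is enforced by extending the action via $0_{\mathbb{F}} \cdot F := \emptyset$, which is consistent with the $\varepsilon + \mu = 0$ case already handled.
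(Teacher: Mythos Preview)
The paper states this proposition without proof, so there is no argument to compare against; your direct axiom-by-axiom verification is the natural approach and is correct. The two places where the filter axioms do real work---$F\cap(-F)=\emptyset$ and scalar distributivity $(\varepsilon+\mu)F=\varepsilon F\cap\mu F$---are handled cleanly: closure under $+$ yields the $\supseteq$ inclusion and the $\varepsilon+\mu=0$ case, while upward closure gives the reverse inclusion via $\eta a=\varepsilon a+\mu a\le\varepsilon a$. One small remark: you are right to take filters as $+$-closed (the standard semilattice notion, and the one forced by the preceding bijection with $\Hom(M,\mathbb{F})$); the paper's later explicit definition in Section~4.2 records only upward closure and $0\notin F$, which by itself would not suffice, e.g.\ $F=\{1,-1\}\subseteq\Finfty$ would violate $F\cap(-F)=\emptyset$.
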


\begin{proposition}
There is a natural injection $M\to(M^F)^F$, given by sending $a$ to $\hat a:=\{F\in M^F\mid a\in F\}$. This is not always a bijection.
\end{proposition}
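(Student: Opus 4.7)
The plan is to verify in turn that $\hat a\in(M^F)^F$, that $a\mapsto\hat a$ is $\mathbb{F}$-linear, that it is injective using the natural duality of the previous section, and finally to exhibit a module for which the map fails to be surjective.

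For the first two steps, fix $a\in M$ and consider the evaluation map $\ev_a\colon M^F\to\mathbb{F}$, $F\mapsto f_F(a)$, where $f_F$ denotes the homomorphism associated to the filter $F$. Since the module structure on $M^F$ was defined so that the bijection $F\leftrightarrow f_F$ becomes an isomorphism $M^F\cong\Hom(M,\mathbb{F})$ with pointwise operations, $\ev_a$ is itself a homomorphism, and its associated filter $\ev_a^{-1}(1)=\{F\mid a\in F\}=\hat a$ gives the required $\hat a\in(M^F)^F$. Linearity of $a\mapsto\hat a$ is then immediate pointwise from $f_F$ being a homomorphism in the first argument.

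For injectivity, recall that for each nonzero $a\in M$ the natural duality yields the homomorphism $a^*=(a,\cdot)\colon M\to\mathbb{F}$ with associated filter $F_a=\{c\in M\mid c\geq a\}$. Given distinct $a,b\in M$, antisymmetry of the natural partial order precludes having both $a\leq b$ and $b\leq a$; so without loss of generality $a\not\leq b$, equivalently $b\not\geq a$. Then $a\in F_a$ trivially while $b\notin F_a$, so $\hat a\neq\hat b$. The case $a=0$ is handled separately: $\hat 0=\emptyset$ (since $f_F(0)=0\neq 1$ for every $F$), whereas $\hat b\ni F_b$ whenever $b\neq 0$.

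For non-surjectivity, take $\mathbb{F}=\Finfty$ and the totally ordered $M=\{\pm x^i\mid i\in\mathbb{Z}\}\cup\{0\}$ from the examples, with $x^i+x^j=x^{\max(i,j)}$. Enumerating homomorphisms $M\to\Finfty$ identifies $M^F$ as $\emptyset$ together with two chains $F_{N,c}=\{cx^i\mid i\leq N\}$ for $N\in\mathbb{Z}\cup\{\infty\}$ and $c\in\{\pm 1\}$, joined at $\emptyset$. The assignment $g\colon M^F\to\Finfty$ with $g(F_{N,c}):=c$ and $g(\emptyset):=0$ is a valid homomorphism (a routine check on addition and scalar multiplication), but its associated filter $\{F_{N,1}\mid N\in\mathbb{Z}\cup\{\infty\}\}$ cannot be of the form $\hat a$ for any $a\in M$: such an $a$ would have to be $x^k$ and lie in $F_{N,1}$ for every $N\in\mathbb{Z}$, forcing $k\leq N$ for all $N$, which is impossible. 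The main delicacy is in this last step — correctly identifying $M^F$ and verifying that the candidate $g$ really is a homomorphism — while the rest of the argument is essentially an unwinding of definitions once the natural duality is in hand.
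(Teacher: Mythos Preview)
Your proof is correct. The paper states this proposition without proof, so there is no argument to compare against directly; however, the injectivity argument you give via principal filters $F_a$ is exactly the one the paper uses a few paragraphs later for the closely related embedding $M\to(M^*)^*$. One small remark: in your injectivity step, the ``without loss of generality $a\not\le b$'' already forces $a\neq 0$ (since $0\le b$ for all $b$), so the separate treatment of $a=0$ is redundant, though harmless.

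The genuinely new content you supply is the explicit counterexample to surjectivity, which the paper asserts but never exhibits. Your choice of the totally ordered module $M=\{\pm x^i\}\cup\{0\}$ is apt, and the computation of $M^F$ and the extra homomorphism $g$ is correct. The key point---that the filter $\{F_{N,1}\mid N\in\mathbb{Z}\cup\{\infty\}\}$ on $M^F$ has no smallest element in the positive chain of $M$ and hence cannot be $\hat a$---is precisely the phenomenon the paper is pointing at when it goes on to introduce topologies and closed filters to repair the duality.
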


This is in contrast to the finite case, when $M$ and $(M^*)^*$ are isomorphic. By introducing a weak form of topology, we can define a better concept of dual module.

\begin{definition}
A \textbf{principal filter} of an $\mathbb{F}$-module $M$ is a filter of the form $F_a=\{x\in M\mid a\le x\}$, and it is said to be \textbf{generated by $a$}.
We say that an $\mathbb{F}$-module $M$ has a \textbf{topology with respect to filters} if all principal filters are closed.
A \textbf{topology with respect to the order} is such that $F_a$ and all sets of the form $L_a=\{x\in M\mid a\ge x\}$ are closed.
\end{definition}

When no topology is specified, we may assume the discrete topology where all filters are closed.

Given a filter $F$ and an element $a\in M$, we will denote by $F(a)=\varepsilon\in\mathbb{F}^\times$ if $a\in\varepsilon F$, and $F(a)=0$ if no such $\varepsilon$ exists. This is compatible with the natural duality for principal filters: $F_a(b)=(a,b)$.

\begin{definition}
If $M$ is an $\mathbb{F}$-module $M$ with topology with respect to filters, let us denote by $M^*$ the set of closed filters. It is called the \textbf{dual module} of $M$. $M^*$ is an $\mathbb{F}$-module, where $0_{M^*}:=\emptyset_M$, $F_1\plus F_2=F_1\cap F_2$, $\lambda\cdot F=\{\lambda^{-1}\cdot a\mid a\in F\}$. Its \textbf{filter-topology} has a closed basis given by $C_a:=\{\Phi\in M^*\mid a\in\Phi\}$ for all $a\in M$, and is a topology with respect to filters. Its \textbf{weak topology} is generated by all $C_a$ and their complements $\overline{C_a}$, and is a topology with respect to the order.
\end{definition}

\begin{proposition}
Given a descending sequence $(x_i)_{i\in I}$ in $M^*$ indexed by a directed set $I$, the intersection is an accumulation point with respect to either the filter-topology or the weak topology.
\end{proposition}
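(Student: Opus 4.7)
The plan is to set $x := \bigcap_{i\in I} x_i$ and show that $x$ is a limit point (hence an accumulation point) of $(x_i)$ in both topologies. First I would reconcile the order on $M^*$ with inclusion of filters: since addition on $M^*$ is intersection, the relation $F_1 \leq F_2$ unfolds to $F_1 \cap F_2 = F_1$, that is, $F_1 \subseteq F_2$. A descending net in $M^*$ is therefore a net of filters shrinking under $\subseteq$, so for $i \leq j$ in $I$ we have $x_j \subseteq x_i$. Two consequences will be used throughout: (i) if $b \in x$ then $b \in x_i$ for every $i$; and (ii) if $a \notin x$, then there exists $i_0 \in I$ with $a \notin x_i$ for all $i \geq i_0$ --- indeed $a \notin \bigcap_i x_i$ provides some $i_0$ with $a \notin x_{i_0}$, and then $x_i \subseteq x_{i_0}$ for $i \geq i_0$ propagates the failure.

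Before turning to topology, I would verify that $x \in M^*$. Closedness is immediate since $x$ is an intersection of closed subsets of $M$. The filter property (uniqueness of the scalar $\varepsilon \in \mathbb{F}^\times$ with $\varepsilon^{-1}a \in x$, together with the closure conditions that cut out a filter) propagates termwise from each $x_i$ thanks to the descending hypothesis. This is the most delicate step, since arbitrary intersections of filters need not be filters, and is precisely where ``descending'' is essential.

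Once $x \in M^*$ is in hand, I would handle both topologies uniformly. A basic open neighborhood of $x$ has the form
\[
U \;=\; \bigcap_{k=1}^{n}\overline{C_{a_k}} \cap \bigcap_{l=1}^{m}C_{b_l},
\]
with $m = 0$ in the filter-topology, where membership $x \in U$ forces $a_k \notin x$ and $b_l \in x$. Applying (ii) to each $a_k$ yields indices $i_k$ with $a_k \notin x_i$ for all $i \geq i_k$; by directedness of $I$ there is $i^* \in I$ with $i^* \geq i_k$ for every $k$. Consequence (i) guarantees $b_l \in x_i$ for every $i$. Hence $x_i \in U$ for every $i \geq i^*$, so $x_i$ lies in every open neighborhood of $x$ eventually, and $x$ is a limit point --- in particular an accumulation point --- of $(x_i)$ in either topology. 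The main obstacle is the preliminary verification that $x$ is actually a closed filter rather than just a closed subset; once this is granted, the directedness-based topological argument is routine.
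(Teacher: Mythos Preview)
Your proof is correct and follows essentially the same route as the paper: set $x=\bigcap_i x_i$, reduce to basic open neighborhoods $\bigcap_k \overline{C_{a_k}}$ (together with $\bigcap_l C_{b_l}$ in the weak case), and use directedness of $I$ to find an index beyond which all $x_i$ lie in the neighborhood. The paper actually omits the verification that $x\in M^*$, so your adding it is a small improvement.

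One remark is inaccurate, though it does not affect the argument. You write that ``arbitrary intersections of filters need not be filters'' and that the descending hypothesis is ``precisely where'' this is rescued. In fact, for filters in the sense of this paper the defining conditions ($0\notin F$; $a+b\in F\Rightarrow a,b\in F$; and the scalar uniqueness you mention) are all preserved under \emph{arbitrary} intersection, with no monotonicity needed; and closedness is likewise preserved. The descending hypothesis is used only in your consequence~(ii), to propagate $a\notin x_{i_0}$ to all $i\ge i_0$, exactly as in the paper.
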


\begin{proof}
Consider the point $x=\bigcap_{i\in I}x_i$. To prove that it is an accumulation point, we need to show that every open set containing $x$ contains all $x_i$ for $i\ge i_0$ for some $i_0\in I$. Since an open basis is given by the complement $\overline{C_a}$ of $C_a$, all open sets containing $x$ must contain an open set of the form $\bigcap_{a\in S}\overline{C_a}$ for a finite set $S\subseteq M$, hence it is enough to show this statement for open sets of this form. Since $x\in C:=\bigcap\overline{C_a}$, this is equivalent to $a\not\in x$ for all $a\in S$, and since $x$ is the intersection of all $x_i$, there is an $i_a$ for all $a$ such that $a\not\in x_{i_a}$. Being a directed set, $I$ contains an index $i_0$ such that $i_0\ge i_a$ for all $a\in S$, because $S$ is finite, and since $(x_i)$ is descending, $a\not\in x_i$ for all $i\ge i_0$ and $a\in S$. Hence $x_i\in C$. The case of weak topology is similar, but open sets of the form $C_a$ may also appear in the intersection $C$.
\end{proof}

\begin{corollary}
Given a closed filter $\Phi$ in either the filter-topology or the weak topology on $M^*$, and a subset $S\subseteq\Phi\subseteq M^*$, the infinite intersection $\bigcap_{s\in S}s$ exists and is an element of $\Phi$.
\end{corollary}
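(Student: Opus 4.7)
The plan is to reduce the claim to the descending-net case handled by the preceding proposition. Two things need to be verified separately: that $\bigcap_{s\in S}s$ is well-defined as an element of $M^*$ (so that the statement ``is an element of $\Phi$'' makes sense), and that it actually lies in $\Phi$.

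For the first point, an arbitrary intersection of filters on $M$ is again a filter, and an arbitrary intersection of closed subsets of $M$ is again closed; hence $\bigcap_{s\in S}s$ is a closed filter on $M$, i.e., an element of $M^*$. This is immediate from the definitions and requires no real work.

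For the second point, I reorganise $S$ into a descending net to which the proposition applies. Let $I$ denote the collection of nonempty finite subsets of $S$, directed by inclusion, and for $T\in I$ set $x_T:=\sum_{s\in T}s\in M^*$. By the definition of addition in $M^*$, $x_T$ equals the finite intersection $\bigcap_{s\in T}s$. Since $\Phi$ is a filter on $M^*$, it is closed under the operation $+_{M^*}$, so each $x_T$ lies in $\Phi$. If $T\subseteq T'$ in $I$ then $x_{T'}\subseteq x_T$, i.e.\ $x_{T'}\le x_T$ in the natural ordering of $M^*$; thus $(x_T)_{T\in I}$ is a descending net with $\bigcap_{T\in I}x_T=\bigcap_{s\in S}s$.

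Applying the preceding proposition to this net, $x:=\bigcap_{s\in S}s$ is an accumulation point of $(x_T)$ in either the filter-topology or the weak topology. It remains only to observe that a closed set contains all accumulation points of nets taking values in it: if $x\notin\Phi$, then $M^*\setminus\Phi$ would be an open neighborhood of $x$ disjoint from the entire net $(x_T)$, contradicting the accumulation property. Therefore $x\in\Phi$. The proof is essentially routine once the correct net is chosen; the only mildly tricky step is the bookkeeping needed to convert an arbitrary subfamily $S\subseteq\Phi$ into the directed system of finite partial sums, together with the observation that a filter on $M^*$ is by definition closed under these finite sums.
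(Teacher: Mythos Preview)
Your proof is correct and follows the same overall strategy as the paper: build a descending net inside $\Phi$ whose intersection equals $\bigcap_{s\in S}s$, invoke the preceding proposition to see that this intersection is an accumulation point, and then use the closedness of $\Phi$ to conclude.

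The difference lies in the choice of index set. The paper takes $I$ to be the full powerset of $S$ and argues by transfinite induction on $|S|$ that each partial intersection $s_i=\bigcap_{s\in i}s$ already lies in $\Phi$; you instead take $I$ to be the nonempty \emph{finite} subsets of $S$ and observe directly that $x_T=\sum_{s\in T}s\in\Phi$ because a filter is closed under finite sums. Your route is more elementary---it sidesteps transfinite induction entirely---and arguably more robust, since the paper's inductive step is a bit delicate (proper subsets of an infinite $S$ need not have strictly smaller cardinality, so the recursion on $|S|$ requires care that the paper does not spell out). Both arguments ultimately rest on the same two ingredients: closure of filters under finite $+$, and the accumulation-point proposition. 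The paper's approach buys nothing extra here; yours is simply the cleaner packaging.
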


\begin{proof}
Since the elements of $M^*$ are closed filters on $M$, their intersection is also a closed filter, hence an element of $M^*$. We only need to show that it is an element of $\Phi$. This can be proven by transfinite recursion on the cardinality of $S$. When $S$ is finite, it is a trivial consequence of the definition of a filter. When $S$ is infinite, let $I$ be the powerset of $S$, and for $i\subseteq S$, let $s_i:=\bigcap_{s\in i}s$. By transfinite recursion, all $s_i$ exist and are contained in $\Phi$. Then $(s_i)_{i\in I}$ is a descending sequence, and the intersection $\bigcap_{s\in S}s$ is its accumulation point. Therefore it is in $\Phi$.
\end{proof}

\begin{theorem}
Given an $\mathbb{F}$-module, there is a natural embedding $M\to(M^*)^*$, given by $\hat a:=\{F\in M^*\mid a\in F\}$.
\end{theorem}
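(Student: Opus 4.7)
The plan is to verify three claims in sequence: $\hat a$ is a genuine element of $(M^*)^*$; the map $a\mapsto\hat a$ is an $\mathbb{F}$-module homomorphism; and this map is injective. The first claim is essentially free, since $\hat a$ is by construction identical to the closed basis element $C_a$ of the filter-topology on $M^*$, so it is closed in that topology; and the filter conditions on $\hat a$ reduce to trivialities once one unwraps that the order on $M^*$ is set inclusion (indeed $F+G=F\cap G$ means $F\le G\iff F\subseteq G$). Upward closure and closure under $+$ of $\hat a$ then just say that $a\in F$ is preserved under passing to supersets and to intersections, which is immediate.

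For the homomorphism axioms I would check three identities. Any $F\in M^*$ is of the form $f^{-1}(1)$ for a homomorphism $f\colon M\to\mathbb{F}$, so $0\notin F$ always, whence $\hat 0=\emptyset=0_{(M^*)^*}$. For addition, the computation $(a+b)+a=a+b$ gives $a+b\le a$, and symmetrically $a+b\le b$; combining this with upward closure and closure under $+$ of filters yields $a+b\in F\iff a,b\in F$, so $\widehat{a+b}=\hat a\cap\hat b=\hat a+\hat b$. For scalars, unwinding the definition $\varepsilon F=\{\varepsilon^{-1}c\mid c\in F\}$ gives the chain $F\in\varepsilon\hat a\iff\varepsilon F\in\hat a\iff a\in\varepsilon F\iff\varepsilon a\in F$, which is $F\in\widehat{\varepsilon a}$.

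For injectivity I would use the principal filters $F_a=\{x\in M\mid a\le x\}$ from the natural duality, which are filters (being the preimages of $1$ under the homomorphisms $a^*=(a,\cdot)$ for $a\ne 0$) and are closed by the standing topology-with-respect-to-filters assumption, so $F_a\in M^*$. Given $a\ne b$ in $M$, antisymmetry of $\le$ forces $a\not\le b$ or $b\not\le a$: in the first case $F_a\in\hat a\setminus\hat b$, in the second $F_b\in\hat b\setminus\hat a$. The edge case where one of the two is zero is subsumed, since $0$ being the unique minimum forces $b\not\le 0$ whenever $b\ne 0$, so $F_b$ still separates.

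The only step that deserves genuine care is the scalar calculation, where the inversion in $\varepsilon F=\{\varepsilon^{-1}c\mid c\in F\}$ is easy to apply in the wrong direction; the rest is formal bookkeeping.
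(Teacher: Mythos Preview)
Your proof is correct and follows essentially the same approach as the paper: both use the principal filters $F_a$ to separate distinct elements for injectivity, and both verify the homomorphism property via $\widehat{a+b}=\hat a\cap\hat b$. You are simply more thorough---explicitly checking that $\hat a=C_a$ is a closed filter (hence a legitimate element of $(M^*)^*$), verifying the scalar axiom, and handling the $a=0$ edge case---whereas the paper's proof compresses all of this into two lines.
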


\begin{proof}
This is a homomorphism since $\widehat{a\plus b}=\hat a\cap\hat b$, an embedding since $\Phi_a:=\{x\in M\mid a\le x\}$ is such that $\Phi_a\in\hat b$ if and only if $a\le b$, so if $\hat a$ and $\hat b$ both contain $\Phi_a$ and $\Phi_b$, then $a=b$.
\end{proof}

\begin{definition}
A module $M$ is \textbf{complete} if every closed filter is principal.
\end{definition}

\begin{theorem}
\label{thm:infislat}
\label{thm:infdual}
A complete module is a lattice, and it admits an order reversing isomorphism to its dual.
\end{theorem}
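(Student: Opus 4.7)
The plan is to obtain both conclusions simultaneously from the observation that, in a complete module, the closed filters are precisely the principal ones (allowing $\emptyset$ as the principal filter ``generated'' by $\infelem\in\overline M\setminus M$). For the lattice part, I would start with arbitrary $a,b\in M$ and consider $F_a\cap F_b\subseteq M$. This set is a filter (intersection of two filters is still closed under $+$ and upward-closed) and it is closed in the filter topology as a finite intersection of closed sets, so it lies in $M^*$. By completeness it must be principal, say $F_a\cap F_b=F_c$ for some $c\in M$, and in the degenerate case where $F_a\cap F_b=\emptyset$ I would set $c=\infelem$ in $\overline M$. A direct check then shows $c=a\cup b$: on the one hand $c\in F_c=F_a\cap F_b$ forces $a,b\le c$; on the other, any common upper bound $d$ satisfies $d\in F_a\cap F_b=F_c$, hence $c\le d$. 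So $\overline M$ has all pairwise joins, and since the meet is already given by $+$, it is a lattice.

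For the duality, I would define $\phi\colon\overline M\to M^*$ by $a\mapsto F_a$, with the convention $\phi(\infelem)=\emptyset$. Completeness gives surjectivity (every closed filter is principal); distinct generators yield distinct smallest elements in their principal filters, hence injectivity. Order reversal is immediate from the very definition of the order: $a\le b$ means $b\in F_a$, equivalently $F_b\subseteq F_a$, which in $M^*$ says $F_b\le F_a$. To upgrade this to a lattice anti-isomorphism I would verify that $\phi$ swaps meets and joins. The identity $F_{a\cup b}=F_a\cap F_b$ holds by our construction of $a\cup b$ above, and $F_a\cap F_b$ is precisely the meet $F_a+F_b$ in $M^*$. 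In the other direction, $F_{a+b}$ contains both $F_a$ and $F_b$ (since $a+b\le a,b$), and any closed filter $G$ containing $F_a\cup F_b$ must contain $a$ and $b$, hence also $a+b$, so $F_{a+b}\subseteq G$; this identifies $F_{a+b}$ as the join of $F_a$ and $F_b$ in $M^*$.

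The main obstacle I expect is not a conceptual one but rather the bookkeeping at the top of the lattice: one must read the definition of ``complete'' as allowing $\emptyset$ among the principal filters, so that $\phi(\infelem)=\emptyset$ makes $\phi$ a genuine bijection and not merely an embedding of $M$ into $M^*$. A secondary subtlety is the compatibility with the scalar action: a short calculation gives $\lambda\cdot F_a=F_{\lambda^{-1}a}$, so $\phi$ intertwines the $\mathbb F$-actions only up to inversion of the scalar — a mild twist that does not affect the order-theoretic statement but that one should record explicitly if the word \emph{isomorphism} is to be read in the full module-theoretic sense.
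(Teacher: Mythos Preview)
Your proposal is correct and takes essentially the same approach as the paper: the lattice argument via $F_a\cap F_b$ being closed, hence principal, is exactly the paper's proof, and for the duality the paper simply says it is ``a trivial consequence of the definition of a complete module,'' whereas you spell out the map $a\mapsto F_a$ in detail.

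One small correction to your bookkeeping: the issue is not only at the top but also at the bottom. Your map $\phi\colon\overline M\to M^*$ is ill-defined at $0$, since $F_0=M$ contains $0$ and is therefore not a filter in the paper's sense (filters are required to exclude $0$). The clean formulation, matching the finite case in Proposition~\ref{thm:findual}, is that the order-reversing bijection runs between $\overline M$ and $\overline{M^*}$, with $0_M\leftrightarrow\infelem_{M^*}$, $\infelem_M\leftrightarrow 0_{M^*}=\emptyset$, and $a\mapsto F_a$ for $a\in M\setminus\{0\}$. This also resolves the cardinality mismatch that would otherwise prevent $\phi\colon\overline M\to M^*$ from being a bijection.
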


\begin{proof}
The second part of the statement is a trivial consequence of the definition of a complete module.
Consider a complete module $M$ and two elements $a$, $b\in M$. The filter $F:=F_a\cap F_b$ is closed and contains all elements $x$ such that $a$, $b\le x$. Since $M$ is complete, $F$ is principal, generated by $c$, hence $a$, $b\le c$ and for all $x\in F$, $c\le x$.
\end{proof}

\begin{proposition}
Every finite module is complete with respect to the discrete topology.
\end{proposition}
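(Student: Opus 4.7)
The plan is to apply Proposition~\ref{thm:findual}. Since every filter is closed in the discrete topology, it suffices to show that every filter on a finite module $M$ is principal.

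Given a non-empty filter $\Phi\subseteq M$, I would set $a:=\sum_{x\in\Phi}x$; this is well-defined because $\Phi$ is finite, and $a\in\Phi$ because filters are closed under addition. For any $b\in\Phi$, idempotence gives $a+b=a$, whence $a\le b$, so $\Phi\subseteq F_a$. This is the easy half.

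The reverse inclusion $F_a\subseteq\Phi$ is where I would invoke Proposition~\ref{thm:findual}: writing $\Phi = f^{-1}(1)$ for the corresponding homomorphism $f\colon M\to\mathbb F$, the proof of that proposition constructs the element of $\overline M$ associated to $f$ precisely as the sum over $f^{-1}(1)$, so $f=a^*$ under the natural duality. Consequently, for any $b\in F_a$, i.e.\ $b\ge a$, we obtain $f(b)=(a,b)=1$, so $b\in\Phi$. Hence $\Phi=F_a$ is principal. The empty filter corresponds to the zero homomorphism and sits inside the order-reversing bijection $\overline M\leftrightarrow\overline{M^*}$ of Proposition~\ref{thm:findual} as the principal filter $F_\infelem$ generated by $\infelem\in\overline M$.

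The only potential obstacle is bookkeeping around the degenerate elements $0$ and $\infelem$, but these are absorbed uniformly by working with the closures; the substantive work has already been done in establishing the finite-case duality, and the present proposition is essentially a direct corollary.
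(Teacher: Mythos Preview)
Your proof is correct and rests on the same core idea as the paper: form $a=\sum_{x\in\Phi}x$ and identify $\Phi$ with the principal filter $F_a$. The paper's one-line proof simply says the filter ``is generated by the sum of its elements.''

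Where you diverge is in the reverse inclusion $F_a\subseteq\Phi$: you route this through Proposition~\ref{thm:findual} and the natural duality, but that detour is unnecessary. You have already shown $a\in\Phi$; if $b\ge a$ then $a+b=a\in\Phi$, and the filter axiom (if $a+b\in\Phi$ then $a,b\in\Phi$) immediately yields $b\in\Phi$. So the inclusion follows directly from the definition of a filter, without any appeal to duality or to homomorphisms $M\to\mathbb{F}$. Your argument is not wrong, just longer than it needs to be.
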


\begin{proof}
Every filter is finite, hence it is generated by the sum of its elements.
\end{proof}

\begin{theorem}
The module $M^*$ is complete and is a lattice, with respect to either the filter-topology or the weak topology.
\end{theorem}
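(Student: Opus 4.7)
The plan is to establish completeness of $M^*$; the lattice conclusion then follows immediately from Theorem \ref{thm:infislat}. So I need to show that every closed filter on $M^*$, in either the filter-topology or the weak topology, is principal.

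Let $\Phi \subseteq M^*$ be a nonempty closed filter. Essentially all of the heavy lifting has already been done by the corollary immediately preceding this theorem: applying it with $S = \Phi$, the intersection
\[
x := \bigcap_{s \in \Phi} s
\]
is an element of $M^*$ (i.e., a closed filter on $M$) and lies in $\Phi$. I would then verify that $\Phi$ is precisely the principal filter $F_x$ generated by $x$. Since addition on $M^*$ is intersection, the induced order is containment: $F_1 \le F_2$ iff $F_1 \cap F_2 = F_1$ iff $F_1 \subseteq F_2$. By construction $x \subseteq s$ for every $s \in \Phi$, giving $\Phi \subseteq F_x$. Conversely, any $t \in M^*$ with $x \subseteq t$ satisfies $x \le t$, and since $x \in \Phi$ and $\Phi$ is upward closed as a filter, $t \in \Phi$, giving $F_x \subseteq \Phi$. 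Thus $\Phi = F_x$. The edge case of the empty closed filter is handled trivially as the principal filter generated by the formal top element $\infelem \in \overline{M^*}$.

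This proves that every closed filter on $M^*$ is principal, so $M^*$ is complete in either topology, and Theorem \ref{thm:infislat} then yields that $M^*$ is a lattice. I do not anticipate any real obstacle here: the proposition on accumulation points of descending sequences and the corollary deduced from it already encode the essential topological content (namely, that arbitrary intersections indexed by elements of a closed filter stay inside that filter). All that remains is to unpack the translation between the order on $M^*$ and containment of filters on $M$, and to invoke the upward-closure axiom for $\Phi$.
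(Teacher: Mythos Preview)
Your proposal is correct and follows essentially the same argument as the paper: form $x=\bigcap_{s\in\Phi}s$, use the preceding corollary to get $x\in\Phi$, and conclude $\Phi=F_x$. You merely spell out the order-vs-containment translation and the empty-filter edge case that the paper's two-line proof leaves implicit.
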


\begin{proof}
Given a closed filter $\Phi\subseteq M^*$, the intersection $\bigcap\Phi$ is an element of $\Phi$. Since $\bigcap\Phi\le a$ for all $a\in\Phi$, $\Phi$ is the principal filter generated by $\bigcap\Phi$.
\end{proof}

Since the closed filters of $M^*$ are the same in the filter-topology and the weak topology, $(M^*)^*$ is isomorphic as an $\mathbb{F}$-module whether $M^*$ is endowed with one or the other. Since there is an isomorphism $M\cong(M^*)^*$ for complete modules $M$, the weak topology can be defined for them as well.

\begin{proposition}
Every complete module $M$ with a topology with respect to filters has a refined topology with respect to the order, referred to as its \textbf{weak topology}, and the order reversing bijection $\overline{M}\to \overline{M^*}$ is continuous.
\end{proposition}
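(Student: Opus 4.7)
The plan is to define the weak topology on $M$ by transferring the weak topology on $(M^*)^*$ through the isomorphism $M\cong(M^*)^*$ given by completeness, and then to verify refinement, order-compatibility, and continuity in turn.

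Since $M$ is complete, every closed filter on $M$ is principal, so $b\mapsto F_b$ is an order-reversing bijection $\overline{M}\to\overline{M^*}$, extended so that $0_M\leftrightarrow\infelem_{M^*}$ and $\infelem_M\leftrightarrow 0_{M^*}$. By the preceding theorem the natural embedding $M\to(M^*)^*$ is an isomorphism. I would define the weak topology on $\overline{M}$ as the pullback, via this isomorphism, of the weak topology on $(M^*)^*$; concretely, its closed subbasis is $\{F_a: a\in M\}\cup\{M\setminus F_a: a\in M\}$, so that every $F_a$ is clopen. Refinement of the original filter topology is then immediate, since the $F_a$'s are already subbasic closed sets.

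To show the resulting topology is with respect to the order, I would prove that $L_a=\{x\in M : x\le a\}$ is closed by establishing the identity
\[
L_a \;=\; \bigcap_{c\not\le a}(M\setminus F_c),
\]
which holds because $x\le a$ is equivalent to the condition that every $c$ satisfying $c\le x$ also satisfies $c\le a$ (taking $c=x$ gives one direction; transitivity of $\le$ gives the other). This exhibits $L_a$ as an arbitrary intersection of subbasic closed sets, hence closed in the weak topology. For continuity of $\overline{M}\to\overline{M^*}$, I would equip $\overline{M^*}$ with its filter topology (closed subbasis $\{C_a=\{\Phi\in M^*:a\in\Phi\}:a\in M\}$) and observe that the preimage of $C_a$ under $b\mapsto F_b$ is $\{b: a\in F_b\}=\{b:b\le a\}=L_a$, which we just showed is closed. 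The adjoined points are accommodated by the conventions that $\infelem_{M^*}$ meets every $C_a$ (corresponding to its preimage $0_M\in L_a$) while $0_{M^*}=\emptyset$ meets none.

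The main obstacle is the identity for $L_a$: it is the key algebraic bridge from filter-closedness of the $F_c$'s to order-closedness of $L_a$, and without it, nothing forces the pulled-back topology to be compatible with the order. A secondary concern is whether one can moreover take the weak topology on $\overline{M^*}$ for the target of the bijection; this would require $\overline{M}\setminus L_a=\bigcup_{c\not\le a}F_c$ to be closed, i.e. $L_a$ to be clopen, which holds in the finite case (where the weak topology is discrete) but is not automatic in general—so one should state the continuity claim with the filter topology on $\overline{M^*}$ to be on safe ground.
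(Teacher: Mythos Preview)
The paper states this proposition without proof; the sentence immediately preceding it (``Since there is an isomorphism $M\cong(M^*)^*$ for complete modules $M$, the weak topology can be defined for them as well'') indicates the intended construction, and your proposal carries out exactly that program. You correctly identify the pulled-back subbasic closed sets as the $F_a$ and their complements, your identity $L_a=\bigcap_{c\not\le a}(M\setminus F_c)$ is valid and does the work of establishing order-compatibility, and the continuity check via preimages of the $C_a$ is right. So your approach is the paper's approach, with the details filled in.

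One small point worth tightening: you write that refinement is immediate because the $F_a$ are subbasic closed sets, but the paper's definition of a ``topology with respect to filters'' only requires the $F_a$ to be closed, not to generate the topology. So the given topology on $M$ could in principle have more closed sets than those generated by the $F_a$, and your pulled-back topology need not contain them all. The clean fix is to take the weak topology on $M$ to be generated by the \emph{original} closed sets together with the complements $M\setminus F_a$; this is then a genuine refinement, your argument for closedness of $L_a$ still applies verbatim, and continuity of $b\mapsto F_b$ only becomes easier when the source topology is enlarged. Your secondary concern about which topology to place on the target $\overline{M^*}$ is well taken; restricting to the filter topology there, as you do, is the safe and correct choice.
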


\begin{proposition}
Consider two $\mathbb{F}$-modules $M_1$ and $M_2$, with either the discrete topology, or a topology with respect to filters. Let us define the closed filters of $M_1\otimes M_2$ to be those filters $F$ where $\{a\mid a\otimes b\in F\}$ and $\{b\mid a\otimes b\in F\}$ are closed. Then $(M_1\otimes M_2)^*\cong\Hom(M_1,M_2^*)$, where $\Hom(M_1,M_2^*)$ is the module of continuous homomorphisms to $M_2$ with the topology with respect to filters.
\end{proposition}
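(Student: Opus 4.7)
The plan is to imitate the classical tensor–hom adjunction in the semilattice-and-filters language of the paper: construct maps in both directions and then verify, in order, (i) well-definedness as filter/homomorphism, (ii) bilinearity/additivity, (iii) the new closedness on $M_1 \otimes M_2$ versus continuity of $M_1 \to M_2^*$, and (iv) that the two constructions are mutually inverse.

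In one direction, given a closed filter $F \in (M_1 \otimes M_2)^*$, I would set
\[
\phi_F(m_1) := \{\,m_2 \in M_2 \mid m_1 \otimes m_2 \in F\,\}.
\]
The fact that $\otimes$ is monotone in each argument (immediate from $a \otimes (b+b') = a \otimes b + a \otimes b'$ and $a + a' = a \Rightarrow (a \otimes c) + (a' \otimes c) = a \otimes c$) makes $\phi_F(m_1)$ a filter on $M_2$, and by the second slicing condition of the proposition's definition, this filter is closed, so $\phi_F(m_1) \in M_2^*$. Bilinearity of $\otimes$ then translates directly into $\phi_F(m_1 + m_1') = \phi_F(m_1) \cap \phi_F(m_1')$ (which is $+$ in $M_2^*$) and $\phi_F(\lambda m_1) = \lambda\, \phi_F(m_1)$. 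Continuity reduces to checking that the preimage of each basic closed set $C_{m_2} = \{\Phi \in M_2^* \mid m_2 \in \Phi\}$ is closed in $M_1$; but
\[
\phi_F^{-1}(C_{m_2}) = \{\,m_1 \in M_1 \mid m_1 \otimes m_2 \in F\,\},
\]
which is a filter by the same monotonicity argument and is closed by the first slicing condition on $F$.

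In the other direction, a continuous homomorphism $\phi\colon M_1 \to M_2^*$ yields the bilinear pairing $(m_1,m_2) \mapsto \phi(m_1)(m_2)$, since every closed filter $\Phi \in M_2^*$ is the same data as a homomorphism $M_2 \to \mathbb{F}$ (filter $\leftrightarrow$ level-set-at-$1$). The universal property of $\otimes$ extends this pairing to a homomorphism $\bar\phi\colon M_1 \otimes M_2 \to \mathbb{F}$, and I set $F_\phi := \bar\phi^{-1}(1)$. The two slicing conditions are built in: $\{m_2 \mid m_1 \otimes m_2 \in F_\phi\} = \phi(m_1)$, closed because $\phi(m_1) \in M_2^*$; and $\{m_1 \mid m_1 \otimes m_2 \in F_\phi\} = \phi^{-1}(C_{m_2})$, closed by continuity of $\phi$.

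Mutual inverseness is then essentially tautological. One direction, $\phi_{F_\phi}(m_1) = \{m_2 \mid \phi(m_1)(m_2) = 1\} = \phi(m_1)$, is read off the definitions. The other, $F = F_{\phi_F}$, holds because the two filters agree on pure tensors $m_1 \otimes m_2$, and the universal property of $\otimes$ says a filter on $M_1 \otimes M_2$ is determined by the induced bilinear pairing on $M_1 \times M_2$. The step I expect to require the most care is lining up the new ad hoc closedness on $M_1 \otimes M_2$ (two slicing conditions) with exactly the two things needed on the other side — landing in $M_2^*$ and preimage-of-$C_{m_2}$ being closed — so I would write those out symmetrically at the start and appeal to the same symmetry at every subsequent step.
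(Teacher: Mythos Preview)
Your proposal is correct and follows essentially the same approach as the paper's proof: both set up the correspondence via $m_1\otimes m_2\in F \Leftrightarrow m_2\in\varphi(m_1)$, then match the two slicing-closedness conditions on $F$ with, respectively, $\varphi(m_1)\in M_2^*$ and continuity via $\varphi^{-1}(C_{m_2})=\{m_1\mid m_1\otimes m_2\in F\}$. Your write-up is in fact more explicit than the paper's (which leaves the bijection between filters on $M_1\otimes M_2$ and maps $M_1\to M_2^*$, as well as mutual inverseness, to the reader), but the argument is the same.
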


\begin{proof}
Elements of $(M_1\otimes M_2)^*$ are closed filters on $M_1\otimes M_2$, while elements of $\Hom(M_1,M_2^*)$ are continuous maps from $M_1$ to closed filters of $M_2$. Given a filter $F$ on $M_1\otimes M_2$ and a map $\varphi\colon M_1\to M_2^*$, we will identify them if for any $m_1\in M_1$ and $m_2\in M_2$, $m_1\otimes m_2\in F$ if and only if $\varphi(m_1)\ni m_2$. It can be checked that this defines a bijection between filters on $M_1\otimes M_2$ and maps $M_1\to M_2^*$.

Now assume that there is corresponding pair of a filter $F$ and a map $\varphi$. Given an element $m_1\in M_1$, we need to see when $\varphi(m_1)$ is closed. It consists of those $m_2$ where $m_1\otimes m_2\in F$, which is a closed set if $F$ is closed. To make $\varphi$ continuous, let us fix a closed set from the basis of topology of $M_2^*$, $C_{m_2}$ for some $m_2\in M_2$. Then $\varphi^{-1}(C_{m_2})=\{m_1\mid\varphi(m_1)\in C_{m_2}\}$. Since $\Phi\in C_{m_2}$ if and only if $m_2\in\Phi$, we get $\varphi^{-1}(C_{m_2})=\{m_1\mid m_2\in\varphi(m_1)\}=\{m_1\mid m_1\otimes m_2\in F\}$, which is closed if $F$ is closed. Furthermore, if $\varphi$ maps pointwise to closed filters and is continuous, then $F$ is closed as well.
\end{proof}

\begin{proposition}\label{prop:proddual}
$(A+B)^*\cong A^*\times B^*$, $(A\times B)^*\cong A^*+B^*$.
\end{proposition}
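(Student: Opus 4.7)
The plan is to construct explicit bijections in both directions for each isomorphism and verify they respect the module operations, in parallel with the preceding propositions on $\Hom$-modules and tensor products. For $(A+B)^*\cong A^*\times B^*$, I would send a closed filter $\Phi$ on $A+B$ to the pair $(\Phi\cap A,\,\Phi\cap B)$, using that $A$ and $B$ embed naturally as submodules of $A+B$ via the coproduct coprojections. In the reverse direction, a pair of closed filters $(\Phi_A,\Phi_B)$ assembles into the filter $\Phi:=\Phi_A\cup\Phi_B\cup\{a\oplus b\mid a\in\Phi_A,\,b\in\Phi_B\}$ on $A+B$, with the usual conventions when one of the two is empty. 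The key algebraic fact is that every nonzero element of $A+B$ falls into exactly one of three classes — nonzero elements of $A$, of $B$, or mixed $a\oplus b$ with $a,b\ne 0$ — and upward closure together with closure under $+$ forces any filter on $A+B$ to be uniquely determined by its two restrictions. The correspondence intertwines intersection of filters (the addition in $M^*$) with componentwise intersection, and scalar multiplication $\lambda\cdot$ acts componentwise, so it is a module isomorphism.

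For $(A\times B)^*\cong A^*+B^*$, I would derive the statement from the first isomorphism by duality. Applying the first with $A^*$ and $B^*$ in place of $A$ and $B$ yields $(A^*+B^*)^*\cong A^{**}\times B^{**}$. Invoking the natural embedding $M\hookrightarrow(M^*)^*$ from the earlier theorem on complete modules — an isomorphism whenever $M$ is complete, which every dual module is — the right-hand side identifies with $A\times B$ (under the standing assumption that $A,B$ are themselves complete, or after passing to their canonical images inside $A^{**},B^{**}$). Taking duals once more, and using that $A^*+B^*$ is itself complete, then produces the desired isomorphism $A^*+B^*\cong(A\times B)^*$. Alternatively, one can argue directly: a closed filter on $A\times B$ is forced, by $(a,0)+(0,b)=(0,0)$ together with properness, to interact nontrivially with at most one of the two coordinate axes, and this trichotomy matches precisely the three classes of elements in the coproduct $A^*+B^*$.

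The main obstacle is the topological bookkeeping: one must pin down the topology on $A+B$ and on $A\times B$ so that a filter is closed if and only if its canonical restrictions (respectively projections) are closed, in analogy with the topology used for $M_1\otimes M_2$ in the preceding proposition. Once this is done, both isomorphisms reduce to elementary set-theoretic verifications together with the universal property of the coproduct; the only further subtlety is to check that $A^*+B^*$ remains complete with respect to this topology, which justifies the double-dualisation step in the second part.
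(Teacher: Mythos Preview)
Your approach is correct but considerably more elaborate than the paper's. The paper dispatches both isomorphisms in a single sentence: since $M\mapsto M^*$ is a contravariant functor and products and coproducts are categorically dual, the duality interchanges them. No explicit filters are ever written down.

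Your first-isomorphism argument (restriction and reassembly of filters along the coproduct inclusions) is exactly what underlies the categorical statement that $\Hom(-,\mathbb{F})$ sends colimits to limits, so here you are simply unpacking the paper's claim. For the second isomorphism your double-dualisation route is a genuinely different manoeuvre: the paper treats the two halves symmetrically via the single functoriality observation, whereas you reduce the second to the first plus completeness of dual modules. Your alternative direct sketch is also on the right track, but needs a bit more care than you suggest: a filter on $A\times B$ may contain elements $(a,b)$ with both coordinates nonzero without containing either $(a,0)$ or $(0,b)$, so the trichotomy is governed by whether the projections to $A$ and to $B$ avoid $0$, not by the axis elements themselves.

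What the paper's argument buys is brevity and a uniform treatment of both isomorphisms. What your approach buys is an explicit description of the maps and, for the first isomorphism, independence from any completeness hypothesis (it is just the universal property of the coproduct applied to maps into $\mathbb{F}$). The topological bookkeeping you flag as an obstacle is real, but the paper sidesteps it entirely by working at the categorical level.
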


\begin{proof}
Since $A\to A^*$ defines a contravariant functor from the category of $\mathbb{F}$-modules to itself, and the product and coproduct are dual to each other, these equalities hold.
\end{proof}

\smallskip

\begin{definition}
Given a module $M$ with a topology, we say that \textbf{all sums exist} if for any set $S\subseteq M$ there is a lower bound.
\end{definition}

In particular, finite modules (with the discrete topology) and complete modules with the weak topology are such that all sums exist.

\begin{proposition}
Assume that all sums exist in $M_1$. A homomorphism $\varphi\colon M_1\to M_2$ admits a natural dual $\varphi^*\colon M_2^*\to M_1^*$, identified by $\varphi^*(\mu)=\left(\sum\{m\in M_1\mid \varphi(m)\ge \mu^*\}\right)^*$ where $\mu^*$ is defined through $(\mu^*)^*=\mu$.
\end{proposition}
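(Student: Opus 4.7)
The plan is, first, to check that $\varphi^*(\mu)$ as written lies in $M_1^*$; second, to verify it is $\mathbb{F}$-linear; and third, to note naturality in $\varphi$. For the first step: given $\mu\in M_2^*$, read $\mu^*$ as the generator of the principal filter $\mu$ (living in $M_2$, or more generally in the order closure $\overline{M_2}$), so that the condition $\varphi(m)\ge\mu^*$ is just $\varphi(m)\in\mu$. Any module homomorphism is order-preserving (if $a\le b$ then $a+b=a$, and applying $\varphi$ yields $\varphi(a)+\varphi(b)=\varphi(a)$), so $S_\mu:=\varphi^{-1}(\mu)=\{m\in M_1\mid\varphi(m)\ge\mu^*\}$ is an upward-closed subset of $M_1$. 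Since all sums exist in $M_1$, $\sigma_\mu:=\sum S_\mu$ is defined, and the principal filter $\sigma_\mu^*=F_{\sigma_\mu}$ is closed by the filter-topology hypothesis, hence an element of $M_1^*$.

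Scalar multiplication is immediate from the identity $(\lambda a,b)=\lambda^{-1}(a,b)$ of the natural duality: it yields $(\lambda\mu)^*=\lambda^{-1}\mu^*$, so $S_{\lambda\mu}=\lambda^{-1}S_\mu$, giving $\sigma_{\lambda\mu}=\lambda^{-1}\sigma_\mu$ and therefore $\varphi^*(\lambda\mu)=\lambda\varphi^*(\mu)$. For additivity, use that addition in the dual is intersection of filters, $\mu_1+\mu_2=\mu_1\cap\mu_2$, so $S_{\mu_1+\mu_2}=S_{\mu_1}\cap S_{\mu_2}$; and the target, $\varphi^*(\mu_1)+\varphi^*(\mu_2)=F_{\sigma_{\mu_1}}\cap F_{\sigma_{\mu_2}}$, equals $F_{\sigma_{\mu_1}\vee\sigma_{\mu_2}}$, where the join $\sigma_{\mu_1}\vee\sigma_{\mu_2}$ exists by Theorem~\ref{thm:infislat} since $M_1$ is a lattice once all sums exist. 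The additivity claim thus reduces to the single equality $\sigma_{\mu_1+\mu_2}=\sigma_{\mu_1}\vee\sigma_{\mu_2}$ in $M_1$.

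The main obstacle is precisely this equality. The inequality $\sigma_{\mu_1+\mu_2}\ge\sigma_{\mu_1}\vee\sigma_{\mu_2}$ is easy: $S_{\mu_1+\mu_2}\subseteq S_{\mu_i}$ forces $\sigma_{\mu_1+\mu_2}\ge\sigma_{\mu_i}$ for $i=1,2$ (smaller sets have larger or equal infima), so $\sigma_{\mu_1+\mu_2}$ is an upper bound for both $\sigma_{\mu_i}$. The reverse needs $\sigma_{\mu_1}\vee\sigma_{\mu_2}\in S_{\mu_1}\cap S_{\mu_2}$. To obtain this I would first show that each $S_{\mu_i}$ is itself a \emph{closed} filter in $M_1$, which follows from continuity of $\varphi$ in the filter topology: the preimage of the basic closed filter generated by $a\in M_2$ is the principal filter on $M_1$ given by those $m$ with $\varphi(m)\ge a$, and a general closed filter is an intersection of these. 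Closedness of $S_{\mu_i}$, together with the existence of all sums, forces $\sigma_{\mu_i}\in S_{\mu_i}$ (so that $S_{\mu_i}=F_{\sigma_{\mu_i}}$), and upward-closedness then places $\sigma_{\mu_1}\vee\sigma_{\mu_2}$ in $S_{\mu_1}\cap S_{\mu_2}=S_{\mu_1+\mu_2}$, yielding the desired $\sigma_{\mu_1+\mu_2}\le\sigma_{\mu_1}\vee\sigma_{\mu_2}$. Naturality, i.e.\ $(\psi\varphi)^*=\varphi^*\psi^*$ and $\id^*=\id$, is then a formal consequence of the set-level identity $(\psi\varphi)^{-1}=\varphi^{-1}\psi^{-1}$ and the universal property used to define $\sigma_\mu$.
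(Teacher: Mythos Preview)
Your approach is correct but differs from the paper's at the outset. Rather than verifying linearity of the displayed formula directly, the paper first observes that precomposition, $\varphi^*(\mu)(m):=\mu(\varphi(m))$, already furnishes a manifestly $\mathbb{F}$-linear map $M_2^*\to M_1^*$, and then checks that this agrees with the formula in the statement. Your entire additivity computation --- joins, the lattice structure, the identity $\sigma_{\mu_1+\mu_2}=\sigma_{\mu_1}\vee\sigma_{\mu_2}$ --- is thereby bypassed: once $\varphi^*(\mu)$ is known to equal $\mu\circ\varphi$, linearity is immediate. What your argument does buy is an honest isolation of the one delicate step both proofs need, namely that $\sigma_\mu\in S_\mu$ (equivalently, that $S_\mu=\varphi^{-1}(\mu)$ is the principal filter on its own infimum). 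The paper simply asserts this, justifying it only by ``since $\varphi$ is order preserving''; you instead try to deduce it from continuity of $\varphi$, which, however, is not among the stated hypotheses. Order preservation alone does not force a homomorphism to preserve arbitrary meets, so in both arguments this step is implicitly leaning on the intended ambient setting (finite modules, or complete modules with their weak topology) rather than the literal hypotheses of the proposition.
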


\begin{proof}
There is a natural map $\varphi^*\colon M_2^*\to M_1^*$ defined as $\varphi^*(\mu)(m)=\mu(\varphi(m))$, so we just have to prove that it is indeed given by the above identification. Consider the $\varphi^*$ defined as in the statement, and denote $\mu_0:=\sum\{x\in M_1\mid \varphi(x)\ge \mu^*\}$. We have $\varphi(x)\ge\mu^*$ if and only if $\mu(\varphi(x))=1$. We need to prove that $\mu_0^*(x)=\mu(\varphi(x))$ for all $x\in M_1$.

Consider a $\varepsilon\in\mathbb{F}^\times$. The set $\{x\in M_1\mid \varphi(x)\ge \mu^*\}$ is in fact a filter generated by $\mu_0$, since $\varphi$ is order preserving, hence $\mu_0^*(x)=\varepsilon$ if and only if $\varepsilon^{-1}x\ge\mu_0$. Since $\mu_0$ is the sum of all elements in $F_{\mu_0}$, this is equivalent to $\varphi(\varepsilon^{-1}x)\ge\mu^*$, and by the definition of $\mu^*$, this is $\mu(\varphi(x))=\varepsilon$. Since this is an equivalence, this also entails that $\mu^*(x)=0$ if and only if $\mu(\varphi(x))=0$.
\end{proof}

Given two finite modules, $M_1$ and $M_2$, a homomorphism is certainly determined if the image of generators of $M_1$ are given. However, not all such maps on the generators extend to the whole $M_1$.

\begin{lemma}
Let $M_1$ and $M_2$ be modules, and $G_1$ be a set of generators of $M_1$ closed under multiplication, and $G_2$ a set of generators of $M_2^*$ closed under multiplication. There is a bijection between homomorphisms $\varphi\colon M_1\to M_2$ and pairs of operation-preserving maps $u\colon G_1\to M_2$ and $v\colon G_2\to M_1^*$ such that $u(g)\ge\gamma$ in $M_2$ if and only if $g\ge v(\gamma)$ in $M_1$ for $g\in G_1$ and $\gamma\in G_2$.
\end{lemma}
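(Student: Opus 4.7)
My plan is to construct the bijection explicitly. The forward direction sends a homomorphism $\varphi\colon M_1\to M_2$ to the pair $(u,v) := (\varphi|_{G_1},\,\varphi^*|_{G_2})$, where $\varphi^*$ is the dual homomorphism from the preceding proposition. Each restriction is operation-preserving because $\varphi$ and $\varphi^*$ are $\mathbb{F}^\times$-equivariant. Reading ``$u(g)\ge\gamma$'' as $\gamma(u(g))=1$ (viewing $\gamma\in M_2^*$ as the filter-valued functional $\gamma\colon M_2\to\mathbb{F}$, and similarly for $v(\gamma)\in M_1^*$), the compatibility condition is exactly the defining identity $\gamma(\varphi(g))=\varphi^*(\gamma)(g)$.

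For the reverse direction, given a compatible pair $(u,v)$, I would extend $u$ by linearity along $G_1$. For $x\in M_1$, pick any representation $x=\sum_i\lambda_i g_i$ with $g_i\in G_1$ (possible since $G_1$ generates $M_1$) and set $\varphi(x):=\sum_i\lambda_i u(g_i)$. After upgrading the biconditional to the identity $\gamma(u(g))=v(\gamma)(g)$ for all $g\in G_1$, $\gamma\in G_2$ (by chasing $\mathbb{F}^\times$-scalars through the compatibility, which is permitted since both $G_1$ and $G_2$ are closed under multiplication), linearity of $\gamma$ yields
\[
\gamma\!\left(\sum_i\lambda_i u(g_i)\right)=\sum_i\lambda_i\gamma(u(g_i))=\sum_i\lambda_i v(\gamma)(g_i)=v(\gamma)(x),
\]
a value depending only on $x$ and not on the chosen representation.

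The hard part will be concluding from this that $\sum_i\lambda_i u(g_i)\in M_2$ itself is determined by $x$. I would first extend the identity from $G_2$ to all of $M_2^*$, using that $G_2$ generates $M_2^*$ so every closed filter is built from elements of $G_2$ by intersections and the $\mathbb{F}^\times$-action; then invoke the natural injection $M_2\hookrightarrow (M_2^*)^*$ established earlier, which says closed filters on $M_2$ separate its points. With well-definedness in hand, $\varphi$ is a module homomorphism by construction, and the two assignments are mutually inverse: starting from $\varphi$, extending the restriction recovers $\varphi$ because $G_1$ generates $M_1$; starting from $(u,v)$, restriction recovers $u$ tautologically and $v$ via the displayed identity $\varphi^*(\gamma)(g)=v(\gamma)(g)$. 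The separation argument is the only substantive step; the rest is bookkeeping.
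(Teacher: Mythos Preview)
Your approach is essentially the same as the paper's. Both send $\varphi$ to $(\varphi|_{G_1},\varphi^*|_{G_2})$ in the forward direction, and for the reverse direction both reduce to showing that the extension of $u$ by sums is well-defined, using that $G_2$ (hence $M_2^*$) separates points of $M_2$; the paper phrases this directly via the order (if $A\neq B$ then some $\gamma\in G_2$ has $A\ge\gamma$ but $B\not\ge\gamma$, and then the compatibility forces $\sum g_i\ge v(\gamma)$ while $\sum g_i'\not\ge v(\gamma)$), whereas you package the same computation as the functional identity $\gamma(u(g))=v(\gamma)(g)$ followed by linearity and the injection $M_2\hookrightarrow(M_2^*)^*$. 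One cosmetic simplification the paper makes: since $G_1$ is closed under scalar multiplication, the $\lambda_i$'s in your representations can be absorbed into the generators, so you may as well work with unadorned sums $\sum_i g_i$.
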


\begin{proof}
If there is a homomorphism $\varphi$, then clearly $u:=\varphi|_{G_1}$ and $v:=\varphi^*|_{G_2}$ satisfy the condition. Conversely, a map $u\colon G_1\to M_2$ extends to a homomorphism if and only if for every pair of sums $\sum_i g_i=\sum_i g_i'$ with $g_i$ and $g_i'\in G_1$, we have $\sum_i u(g_i)=\sum_i u(g_i')$. Consider such a pair, and denote $A:=\sum_i u(g_i)$ and $B:=\sum_i u(g_i')$. If $A\ne B$, there is at least a single $\gamma\in G_2$ such that $A\ge\gamma$ but $B\not\ge\gamma$ in $M_2$, or vice versa. Since $u(g)\ge\gamma$ for some $g\in G_2$ if and only if $g\ge v(\gamma)$, clearly $g_i\ge v(\gamma)$ and $g_i'\not\ge v(\gamma)$. However, by our assumption, $v(\gamma)\le\sum_i g_i=\sum_i g_i'\not\ge v(\gamma)$, a contradiction.
\end{proof}

The following theorem shows which homomorphisms exist.

\begin{theorem}
\label{thm:homstruct}
Let $M_1$ and $M_2$ be modules, either finite with the discrete topology, or complete with the weak topology. Let $G_1$ be a set of generators of $M_1$ closed under multiplication, and $G_2$ a set of generators of $M_2^*$. Given a map $f\colon G_1\to M_2$ that preserves operations, this extends to a homomorphism from $M_1$ if and only if for each $\gamma\in G_2$, $F_\gamma:=f^{-1}(\{x\in M_2\mid x\ge\gamma\})$ is such that for any finite subset $S\subseteq F_\gamma$ and $g\in G_1$, if $g\ge\sum_{s\in S}s$ then $g\in F_\gamma$.
\end{theorem}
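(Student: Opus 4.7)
The plan is to settle the forward direction directly and, for the backward direction, invoke the preceding lemma by constructing an appropriate $v\colon G_2 \to M_1^*$ to pair with $u := f$.

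For the forward direction, suppose $f$ extends to a homomorphism $\varphi\colon M_1 \to M_2$. Given a finite $S \subseteq F_\gamma$ and $g \in G_1$ with $g \ge \sum_{s \in S} s$, the additivity and order-preservation of $\varphi$ yield
\[
f(g) \;=\; \varphi(g) \;\ge\; \sum_{s \in S} f(s) \;\ge\; \gamma,
\]
where the last inequality holds because $\gamma$ is a common lower bound of the $f(s)$'s and therefore sits below their meet. Hence $g \in F_\gamma$, as required.

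For the backward direction, define $v(\gamma) \in M_1^*$ to be the closed filter on $M_1$ generated by $F_\gamma$: in the finite case this is $\{m \in M_1 : m \ge \sum T \text{ for some finite } T \subseteq F_\gamma\}$, and in the complete case, by completeness, it is the principal filter on the infimum $\sum F_\gamma$ taken in $M_1$. Operation-preservation of $v$ (in particular $v(\lambda\gamma) = \lambda v(\gamma)$ for $\lambda \in \mathbb{F}^\times$) is a routine check: unwinding the order-reversing identification $M_2^* \cong M_2$ implicit in defining $F_\gamma$ yields the set equality $F_{\lambda\gamma} = \{\lambda^{-1} s : s \in F_\gamma\}$ in $G_1$, which under passage to generated filters matches the filter-action of $\lambda$ on $v(\gamma)$.

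The crux is the compatibility $f(g) \ge \gamma \iff g \in v(\gamma)$ for $g \in G_1$. The forward implication is immediate from $g \in F_\gamma \subseteq v(\gamma)$. For the reverse, the finite case is direct: $g \in v(\gamma) \cap G_1$ means $g \ge \sum T$ for some finite $T \subseteq F_\gamma$, so the theorem's hypothesis delivers $g \in F_\gamma$. The main obstacle is the complete case, where $g \ge \sum F_\gamma$ need not be witnessed by a finite sub-meet; the approach is to exploit the weak topology on $M_1$, in which each principal filter $F_x = \{y : y \ge x\}$ is closed, together with the fact that $\sum F_\gamma$ is the directed infimum of its finite sub-meets, to approximate and reduce to the theorem's finite condition. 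Once compatibility is established, the preceding lemma produces the unique homomorphism $\varphi\colon M_1 \to M_2$ with $\varphi|_{G_1} = f$.
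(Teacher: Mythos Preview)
Your approach is the same as the paper's: set $u=f$, define $v(\gamma)$ via the sum $\sum F_\gamma$ (equivalently, the closed filter it generates), and invoke the preceding lemma. In fact you are more careful than the paper in one respect. The lemma requires the two-sided compatibility $f(g)\ge\gamma \Leftrightarrow g\ge v(\gamma)$, and the reverse implication is precisely where the theorem's finite-subset hypothesis must enter. You spell this out and handle the finite case cleanly; the paper's proof checks only the forward implication and never explicitly uses the hypothesis.

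The one soft spot is your treatment of the complete case. From $g\ge\sum F_\gamma$ you want to reduce to $g\ge\sum_{s\in S}s$ for some finite $S\subseteq F_\gamma$, but note that finite sub-meets sit \emph{above} $\sum F_\gamma$, not below, so closedness of $F_g$ or $L_g$ in the weak topology does not by itself hand you a finite witness. Your sketch (``approximate by directed finite sub-meets and use that principal filters are closed'') gestures at the right objects but does not close the argument. That said, the paper's own proof is no more detailed at this step, so this is not a divergence from the paper's strategy; it is simply a point that neither version fully resolves.
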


\begin{proof}
Clearly if such a map $\varphi$ exists, $\varphi^{-1}(\{x\in M_2\mid x\ge\gamma\})$ is closed under addition, and all its elements are generated by $F_\gamma$. Conversely, let us construct a map $v\colon G_2\to M_1^*$ by defining $v(\gamma)=\sum \varphi^{-1}(\{x\in M_2\mid x\ge\gamma\})$. If $M_1$ is finite or complete with the weak topology, such a sum exists. Also, if $f(g)\ge\gamma$, then $g\in\varphi^{-1}(\{x\in M_2\mid x\ge\gamma\})$, hence $g\ge v(\gamma)$, satisfying the conditions of the previous lemma. Hence a homomorphism exists.
\end{proof}

\section{Congruences and ideals}

\subsection{Congruences}

A congruence in an $\Finfty$-module $M$ or $\Finfty$-algebra $A$ is an equivalence relation compatible with the natural algebraic structure. 

\begin{Definition}
A congruence $C$ in an $\Finfty$-module $M$ is a set of pairs $(a,b)$ where $a$, $b\in M$ so that
\begin{itemize}
\item for every $a\in M$, $(a,a)\in C$,

\item if $(a,b)\in C$ and $(b,c)\in C$, then $(a,c)\in C$,

\item if $(a,b)\in C$, then $(b,a)\in C$, and finally

\item if $(a,b)\in C$, then for every $c\in M$ we have that $(a\plus c,b\plus c)\in C$.
\end{itemize}
A congruence $C$ in an $\Finfty$-algebra $A$ is a congruence on the underlying module that furthermore satisfies
\begin{itemize}
\item if $(a,b)\in C$, then for every $c\in A$ we have that $(ac,bc)\in C$.
\end{itemize}
\end{Definition}

Clearly, the smallest congruence, $\Delta$, is the set of diagonal pairs, $\Delta:=\{(a,a)|a\in A\}$, for either modules or algebras. The maximal congruence is the set of all pairs. Moreover, notice that if $C$ is a congruence of an $\Finfty$-module $M$, then $M/C$ is also an $\Finfty$-module. Likewise, if $C$ is a congruence of an $\Finfty$-algebra, $A/C$ is also an $\Finfty$-algebra.

Restricting our study to algebras, annihilators of elements of $A$ give rise to congruences.

\begin{Lemma}
Let $a\in A$ and $C$ a congruence in $A$. Then the set of pairs
\[Ann_C(a)=\{(b,c)|(ab,ac)\in C\}\]
is a congruence.
\end{Lemma}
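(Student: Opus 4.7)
The statement is a routine verification, and my plan is to check each of the four congruence axioms (plus the multiplicative one, since $A$ is an algebra) directly, each time reducing the claim about $\Ann_C(a)$ to the corresponding property of the ambient congruence $C$.

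Reflexivity, symmetry, and transitivity are immediate: for any $b$, $(ab,ab)\in C$ by reflexivity of $C$; if $(ab,ac)\in C$ then $(ac,ab)\in C$ by symmetry; and if $(ab,ac),(ac,ad)\in C$ then $(ab,ad)\in C$ by transitivity. These three checks involve no ring structure beyond the definition of $\Ann_C(a)$ itself.

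The additive and multiplicative compatibility axioms use the algebra axioms that link $+$ and $\cdot$. For additive compatibility, suppose $(b,c)\in\Ann_C(a)$, so $(ab,ac)\in C$. For any $d\in A$, distributivity gives $a(b+d)=ab+ad$ and $a(c+d)=ac+ad$, and applying the additive compatibility of $C$ with the element $ad$ yields $(ab+ad,ac+ad)\in C$, hence $(b+d,c+d)\in\Ann_C(a)$. For multiplicative compatibility, given the same hypothesis, associativity gives $a(bd)=(ab)d$ and $a(cd)=(ac)d$, so the multiplicative compatibility of $C$ applied to the pair $(ab,ac)$ with $d$ yields $((ab)d,(ac)d)\in C$, i.e.\ $(bd,cd)\in\Ann_C(a)$.

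There is no genuine obstacle here: the only mild step is recognising that the multiplicative compatibility axiom of $\Ann_C(a)$ reduces to that of $C$ precisely via associativity $a(bd)=(ab)d$, and the additive one via distributivity. Everything else is a transcription of the axioms for $C$. I would therefore write the proof as five short bullet-style verifications with a brief mention that associativity and distributivity are what make the last two go through.
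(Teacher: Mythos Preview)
Your proof is correct and is exactly the routine verification the paper has in mind: the paper's own ``proof'' simply reads \emph{We leave the proof of this statement to the reader}, so your five-step check of reflexivity, symmetry, transitivity, additive compatibility (via distributivity), and multiplicative compatibility (via associativity) is precisely what is expected.
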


\begin{proof}
We leave the proof of this statement to the reader.
\end{proof}

\subsection{Ideals}

Since modules are partially ordered sets, we may define their ideals and filters. Note that ideals will be defined differently for modules and algebras. Let us fix an $\mathbb{F}$-module $M$.

\begin{definition}
An \textbf{ideal of a module}, $I$ is such that $I\plus M\subseteq I$ and $\mathbb{F}\cdot I\subseteq I$ and $0\in I$.
A \textbf{filter of a module}, $F$ is such that if $a\plus b\in F$ then $a\in F$ and $b\in F$, and also $0\not\in F$.
\end{definition}

\begin{definition}
The \textbf{ideal} (or \textbf{kernel}) of a congruence $C$ is the equivalence class of $0$. The \textbf{maximal congruence} for an ideal $I$, if it exists, is the maximal congruence whose ideal is $I$.
\end{definition}

\begin{definition}
A \textbf{maximal filter with respect to an ideal $I$} is a maximal filter among filters that do not intersect $I$. A \textbf{maximal filter} is one that is maximal with respect to the trivial ideal $\{0\}$. A module is \textbf{separable with respect to the order} or just \textbf{separable} if for any pair of distinct elements $a$, $b\in M$, there is a maximal filter $F$ such that $a\in F$ and $b\not\in F$, or vice versa.
\end{definition}

\begin{lemma}
For an ideal $I$ and an element $x\not \in I$, there is a maximal filter with respect to $I$ that does not contain $x$.
\end{lemma}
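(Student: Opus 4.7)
The plan is a direct application of Zorn's lemma. I would introduce the collection $\mathcal{F}$ of all filters $F$ satisfying $F\cap I=\emptyset$ and $x\notin F$, partially ordered by inclusion, and exhibit a maximal element of $\mathcal{F}$.

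First, $\mathcal{F}$ is nonempty: the empty set qualifies as a filter, since the two defining conditions (``$a+b\in F$ implies $a,b\in F$'' and ``$0\notin F$'') hold vacuously, and it trivially meets both constraints. (That $\emptyset$ is a filter is also implicit in the earlier definition of the filter-module $M^F$, where $\emptyset$ is declared to be the zero.) Next, for a chain $(F_\alpha)$ in $\mathcal{F}$, the union $F=\bigcup_\alpha F_\alpha$ serves as an upper bound. The filter property carries over to the union: if $a+b\in F$, then $a+b\in F_\beta$ for some $\beta$, whence $a,b\in F_\beta\subseteq F$; and $0\notin F$ because no $F_\alpha$ contains $0$. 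Disjointness from $I$ and avoidance of $x$ are preserved tautologically under union. Zorn's lemma then produces the desired maximal element.

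I do not expect a technical obstacle in carrying out this proof; it is a standard Zorn argument with no nontrivial verifications. The one point worth flagging is the parsing of the statement: ``maximal filter with respect to $I$ that does not contain $x$'' should be read as \emph{maximal within the collection of filters that simultaneously avoid $I$ and $x$}, rather than as a maximal filter with respect to $I$ (in the sense defined just above) that happens to avoid $x$. The stricter reading fails in general; for instance, if $M=\mathbb{F}$, $I=\{0\}$, and $x=1$, then the unique maximal filter disjoint from $I$ is $\{1,-1\}$, which contains $x$, while Zorn inside the restricted collection yields $\{-1\}$, which is the filter the lemma is really producing.
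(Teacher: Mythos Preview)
Your approach is the same as the paper's: its entire proof reads ``Zorn's lemma.'' Your filling-in of the details is correct, and your observation about the parsing of the statement is well taken---the literal reading (a filter maximal among those disjoint from $I$, which in addition omits $x$) does fail in your example $M=\mathbb{F}$, $I=\{0\}$, $x=1$, so the intended meaning must be the one you supply.
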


\begin{proof}
Zorn's lemma.
\end{proof}

\begin{theorem}
\label{thm:maxcongmodule}
In a module $M$, every ideal has a corresponding maximal congruence $C$, characterized by the property that $M/C$ is separable.
\end{theorem}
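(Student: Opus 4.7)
The plan is to construct the maximal congruence directly as the relation $a \sim_I b$ iff $a$ and $b$ belong to exactly the same maximal filters with respect to $I$, and then verify the three required properties. This is cleaner than attempting to form the join of the family of congruences with kernel $I$, since a union of congruences is rarely a congruence and the kernel of any such join could grow strictly beyond $I$.

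First I would check that $\sim_I$ is a congruence with kernel exactly $I$. Reflexivity, symmetry and transitivity are immediate. Compatibility with $+$ uses that any maximal filter $F$ with respect to $I$ is upward closed and closed under $+$, so $a+c \in F$ iff $a \in F$ and $c \in F$; hence $a \sim_I b$ forces $a+c \sim_I b+c$. Compatibility with the $\mathbb{F}$-action uses that $I$ is $\mathbb{F}^\times$-invariant, so $\lambda^{-1}F$ is again a maximal filter with respect to $I$ for $\lambda \in \mathbb{F}^\times$; since $\lambda a \in F$ iff $a \in \lambda^{-1}F$, the relation carries over. For the kernel, $a \sim_I 0$ iff $a$ lies in no maximal filter with respect to $I$ (since $0$ lies in none). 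If $a \in I$ this is automatic; if $a \notin I$, the principal filter $F_a = \{x \mid x \ge a\}$ is disjoint from $I$ (otherwise the downward closure of $I$ would place $a$ in $I$), and an application of Zorn's lemma extends it to a maximal filter containing $a$.

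Next I would show $\sim_I$ is the maximal congruence with kernel $I$. Let $C$ be any congruence with kernel $I$. The crucial step is that every maximal filter $F$ with respect to $I$ is automatically $C$-saturated. Define $F' := \{x \in M \mid \exists y \in F,\ (x,y) \in C\}$. Since $C$ is compatible with $+$ and the $\mathbb{F}$-action, $F'$ inherits the filter axioms from $F$; and if some $x \in F' \cap I$, then the witnessing $y \in F$ satisfies both $(x,y) \in C$ and $(x,0) \in C$, forcing $(y,0) \in C$, i.e.\ $y \in I \cap F = \emptyset$, a contradiction. Hence $F'$ is a filter disjoint from $I$ containing $F$, and maximality of $F$ forces $F' = F$. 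Therefore $(a,b) \in C$ with $a \in F$ forces $b \in F$, giving $C \subseteq \sim_I$.

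Finally, $M/\sim_I$ is separable: given $[a] \neq [b]$, the definition of $\sim_I$ supplies a maximal filter $F$ of $M$ separating $a$ and $b$, and the image $\pi(F)$ is a maximal filter of $M/\sim_I$ separating $[a]$ from $[b]$ (its maximality holds since any strict enlargement in the quotient would pull back to a strict enlargement of $F$ disjoint from $I$). For the converse, if $C$ has kernel $I$ and $M/C$ is separable, any pair $(a,b) \in \sim_I \setminus C$ would give $[a]_C \neq [b]_C$, hence a maximal filter of $M/C$ separating them, whose preimage under $\pi_C$ is a maximal filter of $M$ (with respect to $I$) separating $a$ and $b$, contradicting $(a,b) \in \sim_I$. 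Combined with $C \subseteq \sim_I$ this forces $C = \sim_I$. The principal obstacle is the $C$-saturation step, which is the one place where an abstract congruence $C$ must be translated into the concrete filter-theoretic language underlying $\sim_I$; all other steps reduce to standard Zorn's lemma and preimage bookkeeping.
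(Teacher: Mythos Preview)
Your construction of $\sim_I$ via maximal filters and the verification that it is a congruence with kernel exactly $I$ is the same as the paper's (the paper first passes to $M/I$ to reduce to the case $I=\{0\}$, but this is purely cosmetic). Your treatment of the characterisation by separability is more complete than the paper's, which leaves the converse implicit.

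The one soft spot is the maximality step. You assert that $F':=\{x\mid\exists\,y\in F,\ (x,y)\in C\}$ ``inherits the filter axioms from $F$''. Closure under $+$ and disjointness from $I$ do follow as you indicate, but upward closure does not: from $(x,y)\in C$, $y\in F$, and $x\le z$ one only gets $(y+z,y)\in C$, which places $y+z$ in $F'$, not $z$. The repair is immediate---replace $F'$ by its upward closure (equivalently by $\{z\mid\exists\,y\in F,\ (y+z,y)\in C\}$); this set is a genuine filter, still disjoint from $I$ because $I$ is downward closed, and contains the $C$-saturation of $F$, so maximality forces it to equal $F$.

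The paper packages this same step as a direct contradiction: working in the separable quotient, if $a\sim_C b$ with $a\in F$, $b\notin F$, maximality of $F$ produces $x\in F$ with $x+b=0$, whence $x+a$ is a nonzero element of the kernel of $C$. Your saturation argument and the paper's contradiction are two presentations of the same idea; yours is arguably cleaner once the upward-closure wrinkle is ironed out.
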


\begin{proof}
Every ideal $I$ has a corresponding minimal congruence such that $a\sim b$ if and only if $a=b$ or $a$, $b\in I$. Therefore, by passing to $M/I$, it is enough to check the statement for $I=\{0\}$.

Let us denote by ${\cal F}(a)$ the set of maximal filters containing $a$. Let us define the equivalence relation $C$ as $a\sim b$ if and only if ${\cal F}(a)={\cal F}(b)$. This is a congruence, since ${\cal F}(\lambda a)=\{\lambda F\mid F\in{\cal F}(a)\}$, and ${\cal F}(a\plus b)={\cal F}(a)\cap{\cal F}(b)$. Furthermore, $M/C$ is separable.

We only need to show that this is in fact the maximal congruence. We may pass to the module $M/C$ via the assumption $M/C=M$, meaning that $M$ is separable. Assume that there is a non-trivial congruence $C$ whose ideal is trivial, meaning that $a\sim b$ for some distinct pair of elements. Since $M$ is separable, we have a maximal filter $F$ such that, for instance, $a\in F$ and $b\not\in F$. Since $F$ is maximal, the filter generated by $F$ and $b$ contains $0$, that is $x\plus b\le 0$ for some $x\in F$. Under the congruence $C$, we get $0=x\plus b\sim x\plus a$. Since $x$, $a\in F$, and $F$ is a filter, $x\plus a\in F$ as well, and so $x\plus a\ne 0$. Then the ideal of $C$ contains a non-zero element $x\plus a$, contradicting our assumption.
\end{proof}

\smallskip

A similar result can be achieved for algebras. Let us fix an algebra $A$.

\begin{definition}
An \textbf{ideal of an algebra}, $I$ is such that it is an ideal of the module, furthermore $A\cdot I\subseteq I$. A maximal filter with respect to an ideal is a maximal filter of the underlying module. A \textbf{quasimaximal filter with respect to an ideal $I$}, $\Phi$ is such that there is a maximal filter $F$ and an $a\in A$ (possibly $a=1$) such that $\Phi=\{x\in A\mid ax\in F\}$. A \textbf{quasimaximal filter} is one that is quasimaximal with respect to the trivial ideal $\{0\}$. An algebra is \textbf{quasiseparable} if for any pair of distinct elements $a$, $b\in M$, there is a quasimaximal filter $F$ such that $a\in F$ and $b\not\in F$, or vice versa.
\end{definition}

We shall denote the set $\{x\in A\mid ax\in F\}$ by $F:a$.

\begin{theorem}
\label{thm:maxcongalgebra}
In an algebra $A$, every ideal has a corresponding maximal congruence $C$, characterized by the property that $M/C$ is quasiseparable.
\end{theorem}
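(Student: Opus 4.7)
The plan is to mirror the proof of Theorem~\ref{thm:maxcongmodule} step by step, everywhere replacing maximal filters by quasimaximal filters. As in the module case, any ideal $I$ of $A$ admits the minimal congruence with kernel $I$, namely $a\sim b$ iff $a=b$ or $a,b\in I$; this is a congruence of algebras because $I$ absorbs multiplication. Passing to $A/I$ therefore reduces us to the case $I=\{0\}$. For each $a\in A$ I would then let ${\cal Q}(a)$ denote the set of quasimaximal filters of $A$ containing $a$, and define $C$ by $a\sim b$ iff ${\cal Q}(a)={\cal Q}(b)$.

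The first substantive task is to check that $C$ is a congruence of algebras. Additive compatibility is immediate from ${\cal Q}(a+b)={\cal Q}(a)\cap{\cal Q}(b)$, using that filters are meet-closed. Multiplicative compatibility is the crux: given $a\sim b$ and $c\in A$, for any quasimaximal $\Phi=F:d$ (with $F$ maximal) containing $ca$, I would note that $dca\in F$ forces $dc\ne 0$, so $F:(dc)$ is itself a quasimaximal filter; it contains $a$, hence by hypothesis contains $b$, whence $dcb\in F$ and $cb\in\Phi$. A symmetric argument yields ${\cal Q}(ca)={\cal Q}(cb)$. Quasiseparability of $A/C$ is then built in: for $[a]\ne[b]$ in $A/C$ we have ${\cal Q}(a)\ne{\cal Q}(b)$, so we may choose a quasimaximal $\Phi=F:d$ separating $a$ from $b$ in $A$. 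Observing that every maximal filter is itself quasimaximal (as $F=F:1$), both $F$ and $\Phi$ are unions of $C$-classes and descend to $A/C$, with $\bar F$ remaining maximal and $\bar\Phi=\bar F:\bar d$ remaining quasimaximal and separating $[a]$ from $[b]$. The kernel of $C$ is $\{0\}$ by Zorn: any $a\ne 0$ lies in some maximal, hence quasimaximal, filter.

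Maximality of $C$ is the final step. Suppose $C'\supsetneq C$ has the same kernel $\{0\}$; then the induced congruence $\bar C'$ on $A/C$ is nontrivial with trivial kernel on the quasiseparable algebra $A/C$. Choose $\bar a\sim'\bar b$ with $\bar a\ne\bar b$, and by quasiseparability pick a quasimaximal $\bar\Phi=\bar F:\bar d$ of $A/C$ with $\bar a\in\bar\Phi$ but $\bar b\notin\bar\Phi$, so $\bar d\bar a\in\bar F$ and $\bar d\bar b\notin\bar F$. Maximality of $\bar F$ forces $\bar x+\bar d\bar b=\bar 0$ for some $\bar x\in\bar F$, and applying the $\bar C'$-relation between $\bar d\bar a$ and $\bar d\bar b$ yields $\bar x+\bar d\bar a\sim'\bar 0$. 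But $\bar x+\bar d\bar a\in\bar F$ and hence is nonzero, contradicting the triviality of the kernel of $\bar C'$. The hardest part throughout will be the multiplicative compatibility of $\sim$: the denominators $d$ in the presentation $F:d$ of a quasimaximal filter must be tracked carefully, and the candidate $F:(dc)$ has to be verified to be quasimaximal at each invocation.
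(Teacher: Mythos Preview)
Your proposal is correct and follows essentially the same route as the paper's proof: reduce to $I=\{0\}$, define $C$ via equality of the sets of quasimaximal filters containing an element, verify it is an algebra congruence, and derive maximality from quasiseparability exactly as you describe. The paper's treatment of multiplicative compatibility is slightly more streamlined: it observes that ${\cal Q}(a)={\cal Q}(b)$ unpacks directly to ``$ua\in F\Leftrightarrow ub\in F$ for every maximal $F$ and every $u\in A$'', so substituting $u=dc$ immediately gives the conclusion, and your stated worry about verifying that $F:(dc)$ is quasimaximal dissolves --- any set of the form $F:e$ with $F$ maximal and $e\in A$ is quasimaximal by definition.
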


\begin{proof}
The proof is similar to the case of modules. Every ideal $I$ has a corresponding minimal congruence, therefore, by passing to $A/I$, it is enough to check the statement for $I=\{0\}$.

Let us denote by ${\cal F}(a)$ the set of quasimaximal filters containing $a$. Let us define the equivalence relation $C$ as $a\sim b$ if and only if ${\cal F}(a)={\cal F}(b)$. Clearly ${\cal F}(\lambda a)=\{\lambda F\mid F\in{\cal F}(a)\}$, ${\cal F}(a\plus b)={\cal F}(a)\cap{\cal F}(b)$. Furthermore if ${\cal F}(a)={\cal F}(b)$, we need to prove ${\cal F}(ac)={\cal F}(bc)$. The antecendent means that $ua\in F$ if and only if $ub\in F$ for all $F$ maximal filters and $u\in A$. In particular, this holds for $u=vc$ for any $v\in A$, hence $v(ac)\in F$ if and only if $v(bc)\in F$. This determines that ${\cal F}(ac)={\cal F}(bc)$. Also, $A/C$ is quasiseparable.

To show that this is in fact the maximal congruence, we pass to the algebra $A/C$. Let us assume that $A$ is quasiseparable and $C$ is the trivial congruence. Assume that there is a non-trivial congruence $C$ whose ideal is trivial, meaning that $a\sim b$ for some distinct pair of elements. Since $A$ is quasiseparable, we have a maximal filter $F$ and $u\in A$ such that, for instance, $ua\in F$ and $ub\not\in F$. Since $F$ is maximal, the filter generated by $F$ and $ub$ contains $0$, that is $x\plus ub=0$ for some $x\in F$. Under the congruence $C$, we get $0=x\plus ub\sim x\plus ua\in F$, and $x\plus ua\ne 0$. Then the ideal of $C$ contains a non-zero element $x\plus ua$, contradicting our assumption.
\end{proof}

\subsection{Congruences in $\Finfty$-fields}

In this section we investigate fields over $\Finfty$ and we prove some elementary statements which are needed for our proof of the prime decomposition. Recall from \ref{def:algebra} the following.

\begin{Definition}
We say that an $\Finfty$-algebra is a field, if every $a\ne 0$ has a multiplicative inverse. 
\end{Definition}

Fields over $\Finfty$ can have non-trivial congruences, on the other hand, the kernels of these congruences are always trivial.

\begin{Lemma}
\label{lemma:ker0}
Let $F$ be a field over $\Finfty$, and $C$ a proper congruence. Then the kernel of $C$ is trivial.
\end{Lemma}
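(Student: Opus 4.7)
The plan is to argue by contradiction, exploiting invertibility to show that a single nonzero element in the kernel forces the whole congruence to collapse.

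Suppose, toward a contradiction, that there exists $a \in F$ with $a \ne 0$ and $(a, 0) \in C$. Since $F$ is a field over $\Finfty$, the element $a$ has a multiplicative inverse $a^{-1} \in F$. Using the compatibility of a congruence on an algebra with multiplication (the axiom that $(x,y) \in C$ implies $(xc, yc) \in C$ for every $c$), I multiply the pair $(a, 0)$ by $a^{-1}$ to conclude $(a \cdot a^{-1}, 0 \cdot a^{-1}) = (1, 0) \in C$.

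Now I would propagate this to every element: for any $b \in F$, multiplying the pair $(1, 0) \in C$ by $b$ gives $(b, 0) \in C$. Hence every element of $F$ is congruent to $0$ modulo $C$, and by transitivity and symmetry, every pair of elements of $F$ lies in $C$. This means $C$ is the maximal congruence (the set of all pairs), contradicting the assumption that $C$ is proper. Therefore no such nonzero $a$ exists, and the kernel of $C$ is $\{0\}$.

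The only potential subtlety is making sure the notion of ``proper congruence'' excludes exactly the congruence consisting of all pairs; this matches the convention fixed earlier in the paper, where the maximal congruence is explicitly defined as the set of all pairs. No real obstacle arises beyond this bookkeeping — the argument is the direct analogue of the classical fact that a field has no nontrivial ideals, transposed from the language of ideals to the language of congruences via invertibility.
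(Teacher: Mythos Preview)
Your argument is correct and is essentially the same as the paper's proof: assume a nonzero $a$ lies in the kernel, multiply by $a^{-1}$ to get $(1,0)\in C$, and conclude that $C$ is not proper. The paper compresses your last paragraph into the phrase ``implying that $C$ cannot be proper,'' but the content is identical.
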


\begin{proof}
Assume that $(a,0)\in C$ for some $a\ne 0$. Then, $a$ is a unit, hence $(1,0)\in C$ implying that $C$ cannot be proper.
\end{proof}

Therefore, by Theorem \ref{thm:maxcongalgebra}, the field has to have a unique maximal (proper) congruence. We construct this unique maximal congruence.

\begin{Proposition}
Let $F$ be a field over $\Finfty$. Then, the set $C=\{(a,b)|a\ne 0, b\ne 0,a\plus b\ne 0\}\cup \{(0,0)\}$ is a congruence.
\end{Proposition}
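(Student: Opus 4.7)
The plan is to verify the five axioms of an algebra congruence in turn (reflexivity, symmetry, transitivity, and compatibility with $+$ and with $\cdot$), noting that reflexivity ($a+a=a\ne 0$ when $a\ne 0$), symmetry (commutativity of $+$), and multiplicative compatibility are immediate. For multiplicative compatibility, if $(a,b)\in C$ with $a,b\ne 0$ and $d\ne 0$, then $ad,bd\ne 0$ since $F^\times$ is a group, and $ad+bd=d(a+b)\ne 0$ for the same reason; the cases with a zero factor collapse to $(0,0)$. The substantive content is transitivity.

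The key step is transitivity, and this is where the field structure (together with the fact from Section~3 that $+$ is the meet operation, i.e.\ $a+b$ is the greatest lower bound of $a$ and $b$) is used essentially. I would argue as follows: given nonzero $a,b,c$ with $a+b\ne 0$ and $b+c\ne 0$, set $u=a+b$ and $v=b+c$, which are nonzero common lower bounds of $\{a,b\}$ and $\{b,c\}$ respectively. Consider the element $w:=uvb^{-1}$, which lies in $F^\times$. Since multiplication by any element preserves the order (if $x\le y$ then $xz+yz=(x+y)z=xz$), the relation $u\le b$ gives $ub^{-1}\le 1$, and multiplying by $v$ yields $w\le v\le c$; symmetrically $vb^{-1}\le 1$ gives $w\le u\le a$. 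Thus $w\le a$ and $w\le c$, so $w+a+c=w$. If $a+c=0$, then since $0$ is absorbing we would get $w=w+0=0$, contradicting $w\in F^\times$. Hence $a+c\ne 0$, so $(a,c)\in C$. The trivial cases where one of the pairs equals $(0,0)$ force all entries to be $0$ and are handled separately.

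For additive compatibility, assume $(a,b)\in C$ with $a,b\ne 0$ and $a+b\ne 0$, and let $d\in F$ be arbitrary. If $d=0$ the claim is trivial, so suppose $d\ne 0$. The case $a+d=b+d=0$ gives $(0,0)\in C$. The contrapositive of transitivity applied to $(a+b\ne 0,\;b+d\ne 0)\Rightarrow a+d\ne 0$ rules out the asymmetric case where exactly one of $a+d,\ b+d$ vanishes. Finally, if both $a+d\ne 0$ and $b+d\ne 0$, then by idempotence $(a+d)+(b+d)=(a+b)+d$; applying transitivity once more to the pairs $(a+b,b)\in C$ (note $(a+b)+b=a+b\ne 0$) and $(b,d)\in C$ yields $(a+b,d)\in C$, i.e.\ $(a+b)+d\ne 0$, which is precisely what is needed. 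This completes the verification, and the only genuinely nontrivial ingredient is the construction of $w=uvb^{-1}$, which is impossible in a mere semiring without inverses — this is where the field hypothesis is indispensable.
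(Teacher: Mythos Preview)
Your proof is correct. For transitivity you and the paper do essentially the same thing: the paper forms the product $b(1+ab^{-1})(1+ca^{-1})$, observes it is a product of units, and notes that its expansion contains the desired sum as a summand (hence that sum cannot be $0$ since $0$ is absorbing); your element $w=(a+b)(b+c)b^{-1}$ is exactly this product after relabeling the ``middle'' element, and your order-theoretic phrasing ($w\le a$, $w\le c$, so $w+a+c=w$) is just another way to read off the same conclusion from the same witness.

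Where you genuinely diverge is additive compatibility. The paper proves the two-variable version $(a,b),(c,d)\in C\Rightarrow(a+c,b+d)\in C$ directly, via a second product expansion $(a+c)(1+ba^{-1})(1+dc^{-1})=a+b+c+d+\cdots$, arguing that if $a+b+c+d=0$ then $a+c=0$ (and symmetrically $b+d=0$). You instead prove only the one-variable form $(a,b)\in C\Rightarrow(a+d,b+d)\in C$ (which suffices for the congruence axioms) and reduce it entirely to the already-established transitivity: first to rule out the asymmetric case, then to deduce $(a+b)+d\ne 0$ from $(a+b,b)\in C$ and $(b,d)\in C$. This is a nice economy---you get the additive axiom for free from transitivity without a second ad hoc computation---while the paper's direct argument has the virtue of yielding the stronger two-pair statement in one stroke.
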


\begin{proof}
We begin with proving transitivity. Assume that $(a,b)\in C$ and $(b,c)\in C$, we prove that $(a,c)\in C$. It is enough to show that whenever $a\plus b\ne 0$ and $a\plus c\ne 0$, then $b\plus c\ne 0$. Indeed consider 
\[b(1\plus ab^{-1})(1\plus ca^{-1})=b\plus c\plus ...\]
and since every element on the left hand side is a unit, thus the right hand side cannot be $0$.

Next we show that if $(a,b)\in C$ and $(c,d)\in C$, then $(a\plus c,b\plus d)\in C$. Indeed, it is enough to show that whenever $a\plus b\ne 0$ and $c\plus d\ne 0$, then either $a\plus b\plus c\plus d\ne 0$ or $a\plus c=b\plus d=0$. Consider the following product
\[(a\plus c)(1\plus ba^{-1})(1\plus dc^{-1})=a\plus b\plus c\plus d\plus ...\]
We see that if $a\plus b\plus c\plus d=0$, then $a\plus c=0$.

Finally, we show that if $(a,b)\in C$ for $a\plus b\ne 0$ and $c\in F$ then $(ac,bc)\in C$. Clearly either $ac\plus bc=c(a\plus b)\ne 0$ or $c=0$ and in this case $(ac,bc)=(0,0)\in C$.
\end{proof}

The above congruence is indeed maximal, since if $(a,b)\in C$ for some $a\ne 0$ and $a\plus b=0$, then $(0,b)=(a\plus b,b)\in C$ and by Lemma \ref{lemma:ker0} it cannot be proper.

Now, we characterize all congruences. Notice that a congruence $C$ of a field $F$ can be characterized by the equivalence class of 1. 

\begin{Proposition}
Let $C$ be a congruence. Then, if $x$ and $y$ are in the equivalence class of 1, then so are $xy^{-1}$, $xy$, $\lambda x\plus \mu y$ for every $\lambda, \mu\in F$ satisfying $\lambda\plus \mu=1$.
\end{Proposition}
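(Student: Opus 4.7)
The plan is to derive each of the three claims by direct manipulation with the two compatibility axioms (addition and multiplication) in the definition of an algebra congruence, together with transitivity. Write $x \sim y$ for $(x,y) \in C$; by hypothesis $x \sim 1$ and $y \sim 1$.

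First I would handle $xy$. Multiplying the relation $x \sim 1$ by $y$ gives $xy \sim y$, and then transitivity with $y \sim 1$ yields $xy \sim 1$. For $xy^{-1}$, I would first observe that $y^{-1} \sim 1$: multiplying $y \sim 1$ by $y^{-1}$ produces $1 = y \cdot y^{-1} \sim y^{-1}$. Then multiplying $x \sim 1$ by $y^{-1}$ gives $xy^{-1} \sim y^{-1}$, and transitivity with $y^{-1} \sim 1$ finishes the case.

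For the third claim, suppose $\lambda + \mu = 1$. Multiplying $x \sim 1$ by $\lambda$ and $y \sim 1$ by $\mu$ gives $\lambda x \sim \lambda$ and $\mu y \sim \mu$. Now I add these relations using the additive compatibility axiom one side at a time: from $\lambda x \sim \lambda$, adding $\mu y$ yields $\lambda x + \mu y \sim \lambda + \mu y$; from $\mu y \sim \mu$, adding $\lambda$ yields $\lambda + \mu y \sim \lambda + \mu = 1$. Transitivity then gives $\lambda x + \mu y \sim 1$.

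There is essentially no obstacle: every step is a single application of one of the congruence axioms. The only subtle point worth flagging is the bootstrap step $y^{-1} \sim 1$ needed for $xy^{-1}$, which uses that the congruence is on the algebra structure (so we may multiply a relation by the element $y^{-1}$); the rest is a mechanical chaining of relations and transitivity.
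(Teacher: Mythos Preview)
Your proof is correct and follows essentially the same route as the paper: the paper dismisses the first two claims as ``trivial'' and for the third multiplies $(x,1)$ and $(y,1)$ by $\lambda$ and $\mu$ respectively, then adds to reach $(\lambda x+\mu y,1)$ --- precisely what you do, only with the intermediate step $\lambda x+\mu y \sim \lambda+\mu y \sim 1$ spelled out.
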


\begin{proof}
The first two assertions are trivial. We prove the third one. Since $(x,1)\in C$ and $(y,1)\in C$, therefore $(\lambda x,\lambda)$ and $(\mu y,\mu)$ are in $C$, and thus so is $(\lambda x\plus \mu y,1)$.
\end{proof}

Actually this completely characterizes a congruence.

\begin{Proposition}
\label{prop:S}
Let $S$ be a subset of $F\setminus 0$ so that whenever $x,y\in S$, then $xy^{-1}$, $xy$ and $\lambda x\plus \mu y$ are in $S$ as well for every $\lambda,\mu\in F$ so that $\lambda\plus \mu=1$. Then, the set 
\[C=\{(a,b)|a\ne 0, b\ne 0, ab^{-1}\in S\}\cup \{(0,0)\}\]
is a congruence.
\end{Proposition}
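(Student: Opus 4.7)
The plan is to verify the congruence axioms one by one. All of them except compatibility with addition follow immediately from the three closure conditions on $S$; the real work is concentrated in a single algebraic identity that lets condition (iii) do everything.

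First I would observe that if $S$ were empty then $C=\{(0,0)\}$ would fail reflexivity on non-zero elements, so $S\ne\emptyset$; then for any $x\in S$ the identity $1=xx^{-1}$ combined with the first closure property gives $1\in S$. Reflexivity, symmetry, and transitivity now fall out at once from
\[
aa^{-1}=1,\qquad ba^{-1}=1\cdot(ab^{-1})^{-1},\qquad ac^{-1}=(ab^{-1})(bc^{-1}),
\]
each of which lies in $S$ by the stated closure properties. Compatibility with multiplication is equally routine: if $c=0$ both $ac$ and $bc$ vanish, and if $c\ne 0$ then $(ac)(bc)^{-1}=ab^{-1}\in S$.

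The hard part will be compatibility with addition. Given $(a,b)\in C$ with both entries non-zero, set $s:=ab^{-1}\in S$, and for an arbitrary $c\in F$ set $v:=cb^{-1}$, so that $a+c=b(s+v)$ and $b+c=b(1+v)$. I must show that $s+v$ and $1+v$ are simultaneously zero or simultaneously non-zero, and in the latter case that their ratio lies in $S$. The key observation---and really the only creative step---is that when $1+v\ne 0$, the choice $\lambda:=(1+v)^{-1}$, $\mu:=v(1+v)^{-1}$ satisfies $\lambda+\mu=1$, and distributivity yields
\[
\lambda s+\mu\cdot 1=(s+v)(1+v)^{-1}.
\]
Condition (iii) applied with $x=s$ and $y=1$ forces $(s+v)(1+v)^{-1}\in S$, which in one stroke both shows $s+v\ne 0$ and exhibits the required ratio.

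To close the loop I would run the mirror computation on $s^{-1},1\in S$: when $s+v\ne 0$, the choice $\lambda:=s(s+v)^{-1}$, $\mu:=v(s+v)^{-1}$ again satisfies $\lambda+\mu=1$ and gives
\[
\lambda s^{-1}+\mu\cdot 1=(1+v)(s+v)^{-1}\in S,
\]
forcing $1+v\ne 0$. Together the two implications yield $s+v=0\Leftrightarrow 1+v=0$, ruling out the dangerous ``one-sided'' case in which exactly one of $a+c$, $b+c$ vanishes. When both vanish we land in $(0,0)\in C$; when both are non-zero the ratio computed above places $(a+c,b+c)$ in $C$. The boundary cases $a=b=0$ or $c=0$ are immediate.
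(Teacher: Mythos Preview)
Your proof is correct and uses essentially the same idea as the paper: both hinge on manufacturing coefficients $\lambda,\mu$ with $\lambda+\mu=1$ so that closure condition (iii) produces the needed ratio, and both dispatch the degenerate zero case by a symmetric ``mirror'' application of the same identity. The only cosmetic difference is that you verify the one-sided axiom $(a+c,b+c)\in C$ exactly as stated in the definition, whereas the paper proves the equivalent two-sided form $(a+c,b+d)\in C$ using $\lambda=b(b+d)^{-1}$, $\mu=d(b+d)^{-1}$ applied to $x=ab^{-1}$, $y=cd^{-1}$; your explicit treatment of $S\ne\emptyset$ and $1\in S$ is a detail the paper leaves implicit.
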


\begin{proof}
The only assertion which is not trivial is that if $(a,b)\in C$ and $(c,d)\in C$ then $(a\plus c,b\plus d)\in C$. If $b\plus d\ne 0$, then $b(b\plus d)^{-1}\plus d(b\plus d)^{-1}=1$, and hence $ab^{-1}b(b\plus d)^{-1}\plus cd^{-1}d(b\plus d)^{-1}=(a\plus c)(b\plus d)^{-1}\in S$, and hence $(a\plus c,b\plus d)\in C$. Otherwise, if $b\plus d=0$, then by symmetry we get that $a\plus c=0$ and we are done.
\end{proof}

An easy corollary of the above characterization is the following.

\begin{Corollary}
\label{cor:gen}
Let $x\ne 0$, then the equivalence class of 1 in congruence generated by $(x,1)$ is the set
\[\left\{\frac{\sum_{i=1}^n \lambda_i x^i}{\sum_{j=1}^k \mu_j x^j}:\sum_{i=1}^n \lambda_i=\sum_{j=1}^k \mu_j=1\right\}\]
\end{Corollary}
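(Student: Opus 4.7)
The plan is to invoke Proposition~\ref{prop:S}, which identifies a congruence on the field $F$ with its equivalence class of $1$, characterized as a nonzero subset $S\subseteq F\setminus\{0\}$ closed under $(u,v)\mapsto uv$, $(u,v)\mapsto uv^{-1}$, and $(u,v)\mapsto \lambda u+\mu v$ whenever $\lambda+\mu=1$. Writing $T$ for the set on the right-hand side of the corollary, I would prove $T$ is precisely this class by two inclusions.

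For the containment ``class $\subseteq T$'', I would check $T$ satisfies the three closure conditions and contains $x$, so that $(x,1)$ lies in the congruence associated to $T$ via Proposition~\ref{prop:S}, which by minimality forces the generated congruence's class of $1$ to lie inside $T$. Closure under multiplication and inversion is routine: for $P/Q,\,R/W\in T$, the product $PR/(QW)$ and quotient $PW/(QR)$ have numerators and denominators whose coefficient-sums multiply to $1\cdot 1=1$ by bilinearity of the distributive law. For $\lambda+\mu=1$, clearing denominators gives $\lambda(P/Q)+\mu(R/W)=(\lambda PW+\mu QR)/(QW)$, whose numerator has coefficient-sum $\lambda+\mu=1$. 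That $x\in T$ is witnessed by $x=x^2/x$.

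For the reverse inclusion, let $S_0$ denote the class of $1$ in the congruence generated by $(x,1)$. By Proposition~\ref{prop:S}, $S_0$ contains $1$ and $x$ and is closed under all three operations. Closure under products yields $x^n\in S_0$ for $n\ge 1$; applying $(u,v)\mapsto uv^{-1}$ to $(1,x^n)$ produces $x^{-n}\in S_0$, so every integer power of $x$ lies in $S_0$. An induction on the number of terms using convex combinations then shows that every nonzero element $\sum_i\lambda_i x^i$ with $\sum_i\lambda_i=1$ belongs to $S_0$, and closure under quotients delivers every element of $T$.

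The main technical subtlety is that in an $\Finfty$-semifield many sums $\lambda x^i+\mu x^j$ collapse to $0$, while Proposition~\ref{prop:S} governs only the nonzero portion. One must therefore organize the inductive construction in the second inclusion so that the partial sums produced are nonzero and so that the scalars in each convex combination genuinely sum to $1$ (not $0$). This is not a real obstruction — both $T$ and $S_0$ exclude $0$ by construction, the latter by Lemma~\ref{lemma:ker0} — but the freedom to regroup and renormalize a target sum $\sum_i\lambda_i x^i$ (splitting it as a convex combination $\lambda\cdot u+\mu\cdot v$ with $\lambda=\sum_{i\in I}\lambda_i$, $\mu=\sum_{i\notin I}\lambda_i$ for a suitable index subset $I$) must be used carefully to avoid producing a zero divisor of the sum as an intermediate object. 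This bookkeeping is the heart of the proof.
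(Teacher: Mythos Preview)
Your proposal is correct and follows the same two-inclusion strategy as the paper's (one-line) proof: verify that $T$ is closed under the operations of Proposition~\ref{prop:S}, and verify that every element of $T$ lies in the equivalence class of~$1$. For the second inclusion, the partial-sum bookkeeping you flag can be bypassed entirely by arguing directly with the congruence axioms rather than via the closure properties of $S_0$: from $(x,1)\in C$ one gets $(x^i,1)\in C$ by repeated multiplication and transitivity, hence $(\lambda_i x^i,\lambda_i)\in C$, and summing yields $\bigl(\sum_i\lambda_i x^i,\ \sum_i\lambda_i\bigr)=\bigl(\sum_i\lambda_i x^i,\ 1\bigr)\in C$ with no intermediate nonvanishing to check---this is presumably what the paper means by ``these elements have to be in the equivalence class.''
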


\begin{proof}
We see that these elements have to be in the equivalence class and we also see that this set is closed under the operations listed in Proposition \ref{prop:S}.
\end{proof}

\section{Prime congruences}

In this section we define prime congruences and we prove some simple statements about them. We also explicitly compute all prime congruences of $\Finfty[x]$. We begin with the motivation.

In the work of D\'aniel Jo\'o and Kalina Mincheva (\cite{joo2014}), prime congruences were defined in additively idempotent semirings in a very straightforward manner. If the semiring were a ring, and $C$ a congruence in it, $(a,b)\in C$ would hold if and only if $(a-b,0)\in C$. For any pairs $(a-b,0)$ and $(c-d,0)$, their product $((a-b)(c-d),0)\in C$ if and only if $(ac\plus bd,ad\plus bc)\in C$. Therefore they defined $C$ to be a prime if $(ac+bd,ad+bc)\in C$ entails that either $(a,b)\in C$ or $(c,d)\in C$, and this definition holds in semirings in general.

Unfortunately, for our $\Finfty$-algebras, this condition is too strong, since choosing $c=0$, $ac\plus bd=ad\plus bc=0$, and $(0,0)\in C$, hence all $(a,b)\in C$. For intuition, we turned to the ring $\mathbb{Z}_{\infplace}$. In Nikolai Durov's work (\cite{durov2007}), $\mathbb{Z}_{(\infplace)}$ is isomorphic to the closed interval $[-1,1]$, and instead of addition, we have convex combinations, such as $\frac{a+b}{2}$. To avoid the absorbing properties of $0$, let us interpret the condition $(a,b)\in C$ for some congruence as the harmonic difference $a\star b:=\frac{1}{\frac{1}{a}-\frac{1}{b}}$, instead of the standard difference $a-b$. Then $(ac\plus bd)(ad\plus bc)(a\star b)(c\star d)=abcd((ac\plus bd)\star(ad\plus bc))$. This motivates the following preliminary definiton for a prime congruence.

\begin{Definition}
We say that a proper congruence $C$ is prime if the following two conditions hold
\begin{enumerate} 
\item Whenever $abcd(ac\plus bd,ad\plus bc)\in C$ then either
\begin{itemize}
\item $(a,b)\in C$ or
\item $(c,d)\in C$ or
\item $(ac\plus bd,0)\in C$ or
\item $(ad\plus bc,0)\in C$.
\end{itemize}
\item Whenever $abc(ac,bc)\in C$ then either
\begin{itemize}
\item $(a,b)\in C$ or
\item $(ac,0)\in C$ or
\item $(bc,0)\in C$.
\end{itemize}
\end{enumerate}
\end{Definition}

\begin{remark}
The reason that there are two conditions is that the direct sum contains other elements than pairs of elements.
\end{remark}

Once prime congruences are specified, we can define $\Spec$ of any $\Finfty$-algebra (as a topological space) as follows:

\begin{definition}
Given a congruence $C$ in $\Finfty$-algebra $A$ we denote the set of all prime congruences containing $C$ by $V(C)$. Let $\Spec A$ be the \textbf{spectrum of $A$}, consisting of prime congruences. The closed sets of $\Spec A$ are generated by those of the form $V(C)$ where $C$ is a congruence.
\end{definition}

\begin{Lemma}
Let $C$ be a prime congruence. Then $(ab,0)\in C$ implies that $(a,0)\in C$ or $(b,0)\in C$.
\end{Lemma}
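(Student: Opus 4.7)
The plan is to apply condition (2) of primality with the triple $(a,b,1)$ to obtain a three-way dichotomy in which two alternatives are already the desired conclusion, and then to reduce the remaining case to an auxiliary claim that $(a^2,0)\in C$ forces $(a,0)\in C$. The auxiliary claim will itself follow from a second application of condition (2), this time with a sign twist that exploits the $\Finfty$-module identities.

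For the main argument, I would start from $(ab,0)\in C$ and derive $(a^2b,ab^2)\in C$ by multiplying by $a$ and by $b$ and then using transitivity through $0$. This pair is precisely $abc\cdot(ac,bc)$ for the triple $(a,b,1)$, so condition (2) forces one of $(a,b)\in C$, $(a,0)\in C$, or $(b,0)\in C$. The latter two are exactly the conclusion we want. In the remaining case $(a,b)\in C$, multiplying the pair by $a$ gives $(a^2,ab)\in C$, and transitivity with $(ab,0)\in C$ yields $(a^2,0)\in C$, reducing the lemma to the auxiliary claim.

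For the auxiliary claim I would apply condition (2) with the triple $(a,-a,1)$. The hypothesis pair $abc\cdot(ac,bc)=-a^2\cdot(a,-a)=(-a^3,a^3)$ lies in $C$ because $(a^2,0)\in C$ yields $(-a^3,0),(a^3,0)\in C$ by scalar multiplication, whence $(-a^3,a^3)\in C$ by transitivity through $0$. Condition (2) then produces the alternatives $(a,-a)\in C$, $(a,0)\in C$, or $(-a,0)\in C$. The third collapses to $(a,0)\in C$ after multiplication by $-1$. The first, combined with the reflexive pair $(a,a)\in\Delta$, yields the componentwise sum $(a+a,a+(-a))=(a,0)$ in $C$, using the $\Finfty$-module identities $a+a=a$ and $a+(-a)=0$. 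In every case $(a,0)\in C$.

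The chief obstacle is identifying the useful sign-twist for the auxiliary claim: the option $(a,-a)\in C$ is precisely the one that collapses to $(a,0)\in C$ through $a+(-a)=0$, and the symmetry $(-a,0)\Leftrightarrow(a,0)$ closes the third alternative. Thus condition (2) applied to $(a,-a,1)$ forces a choice between three alternatives that all give $(a,0)\in C$, and this use of the signed $\Finfty$-module structure is what separates the argument from its analogue in ordinary additively idempotent semirings.
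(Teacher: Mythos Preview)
Your proof is correct and follows essentially the same two-step strategy as the paper: reduce $(ab,0)\in C$ via condition~(2) with $c=1$ to the case $(a,b)\in C$ and hence $(a^2,0)\in C$, then handle the square by a second, sign-twisted application of condition~(2). The only noteworthy difference is in the auxiliary claim: the paper applies condition~(2) to the triple $(1,-1,x)$, so the first alternative $(1,-1)\in C$ is ruled out immediately because $C$ is proper, whereas your choice of $(a,-a,1)$ leaves the alternative $(a,-a)\in C$ to be dispatched by the idempotent sum $(a+a,a+(-a))=(a,0)$---a perfectly valid extra step, but one the paper's slicker choice of triple avoids.
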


\begin{proof}
Assume first that $(x^2,0)\in C$ holds for a prime congruence $C$. Applying the second condition for $a=1$, $b=-1$ and $c=x$ that either $(x,0)$ or $(-x,0)$ holds in $C$.

Now, assume that $(ab,0)\in C$ holds for a prime congruence $C$. Applying the second condition for $c=1$, we get that either $(a,b)\in C$, or $(a,0)$ or $(b,0)\in C$. If $(a,b)\in C$, then $(a^2,0)\in C$, therefore $(a,0)\in C$.
\end{proof}

\begin{Lemma}
Let $C$ be a prime congruence. Assume that neither $(ac,0)$ nor $(bc,0)$ holds in $C$. Then $(ac,bc)\in C$ implies that $(a,b)\in C$.
\end{Lemma}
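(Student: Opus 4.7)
The plan is to directly invoke the second condition in the definition of a prime congruence, applied to the pair $(ac,bc)$ itself. The work all lies in verifying the antecedent, which is the formal statement $abc(ac,bc)\in C$, meaning that the pair $(a^2bc^2, ab^2c^2)$ lies in $C$.

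First, I would observe that since $C$ is a congruence, it is in particular closed under multiplication of both entries of a pair by any fixed element of the algebra. Thus, starting from the hypothesis $(ac,bc)\in C$ and multiplying both entries by $abc$, I obtain $(a\cdot c\cdot abc,\; b\cdot c\cdot abc)=(a^2bc^2,\, ab^2c^2)\in C$. In the notation adopted just before the lemma, this is precisely the assertion $abc(ac,bc)\in C$, so the hypothesis of condition (2) in the definition of a prime congruence is satisfied.

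Applying condition (2) of the definition then gives one of three alternatives: either $(a,b)\in C$, or $(ac,0)\in C$, or $(bc,0)\in C$. By the standing assumption of the lemma, the last two alternatives are explicitly ruled out. Hence only the first can hold, and I conclude that $(a,b)\in C$, which is exactly the statement of the lemma.

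There is essentially no obstacle here beyond reading the notation correctly: the argument is a single direct application of the prime condition, using only the fact that congruences respect multiplication by arbitrary elements. The more substantial work involving prime congruences (e.g.\ the prime decomposition theorem promised in the introduction) will build on small structural lemmas of exactly this type.
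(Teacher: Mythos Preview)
Your proof is correct and is exactly the argument the paper has in mind; the paper's own proof consists of the single word ``Trivial.'' You have simply spelled out that triviality: multiply the given relation $(ac,bc)\in C$ through by $abc$ to obtain the hypothesis $abc(ac,bc)\in C$ of condition~(2) in the preliminary definition of a prime congruence, then eliminate the two unwanted alternatives by assumption.
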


\begin{proof}
Trivial.
\end{proof}

The above lemmas show that we can simplify our notion of a prime congruence to the following equivalent definition.

\begin{Definition}
We say that a proper congruence $C$ is prime if the following two conditions hold
\begin{enumerate} 
\item Whenever $(ac\plus bd,ad\plus bc)\in C$ then either
\begin{itemize}
\item $(a,b)\in C$ or
\item $(c,d)\in C$ or
\item $(ac\plus bd,0)\in C$ or
\item $(ad\plus bc,0)\in C$.
\end{itemize}
\item Whenever $(ac,bc)\in C$ then either
\begin{itemize}
\item $(a,b)\in C$ or
\item $(c,0)\in C$.
\end{itemize}
\end{enumerate}
\end{Definition}

From now on, we use this definition for a prime congruence. Instead of writing $(ac\plus bd,ad\plus bc)$ we will write $(a,b)(c,d)$. We proceed with some simple lemmas needed to characterize the prime congruences of $\Finfty[x]$.

\begin{Lemma}
Let $C$ be a prime congruence. Then for any two elements $a,b\in A$ we have that $a\geq b$ or $a\leq b$ or $a\plus b=0$ in $A/C$.
\end{Lemma}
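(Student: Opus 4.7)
The plan is to engineer the hypothesis of clause~(1) in the definition of prime congruence so that it holds trivially, and then read off the conclusion from the four alternatives. The key observation is that idempotence of $+$ collapses certain expressions, so one needs to find four elements whose ``cross sums'' $ac+bd$ and $ad+bc$ coincide already in $A$, and whose other alternatives correspond to $a\ge b$, $a\le b$, or $a+b=0$.

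Concretely, I apply clause~(1) with $(a,b,c,d)$ replaced by $(a+b,\,b,\,a+b,\,a)$. Expanding commutatively and using $x+x=x$ throughout,
\[
(a+b)(a+b)+b\cdot a \;=\; a^2+ab+b^2 \;=\; (a+b)\cdot a+b(a+b),
\]
so the pair $(ac+bd,\,ad+bc)$ is diagonal and lies in $C$ for free. Clause~(1) now yields four alternatives: $(a+b,b)\in C$, or $(a+b,a)\in C$, or $(a^2+ab+b^2,\,0)\in C$ (the fourth possibility coincides with the third). The first translates in $A/C$ to $a+b=b$, i.e.\ $a\ge b$; the second to $a+b=a$, i.e.\ $a\le b$; both are among the desired conclusions.

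The only remaining case is $a^2+ab+b^2\equiv 0$ in $A/C$. A further use of idempotence gives $(a+b)^2=a^2+ab+b^2$, so $(a+b)^2\equiv 0$, and then the integrality lemma for prime congruences (the statement that $(xy,0)\in C$ forces $(x,0)\in C$ or $(y,0)\in C$) forces $a+b\equiv 0$, which is the third conclusion. The only real insight is guessing the quadruple $(a+b,b,a+b,a)$; once found, everything is routine expansion plus one appeal to the already-established integrality of primes, and I do not anticipate any further obstacle in the commutative setting the paper works in.
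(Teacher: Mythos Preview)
Your argument is correct and essentially identical to the paper's: the paper applies clause~(1) to the quadruple $(a+b,\,a,\,a+b,\,b)$ (a trivial relabeling of your $(a+b,\,b,\,a+b,\,a)$), observes that both cross sums equal $(a+b)^2$, and reads off exactly the same three alternatives, invoking the same integrality lemma to pass from $(a+b)^2\equiv 0$ to $a+b\equiv 0$. The only cosmetic difference is that the paper opens by disposing of the case where $a$ or $b$ is already $0$ in $A/C$, but as you implicitly recognize, this is subsumed by the conclusion $a+b=0$ anyway.
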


\begin{proof}
We can assume that neither $a$ nor $b$ is identified with $0$ in $A/C$. Consider the following identity
\[(a\plus b,a)(a\plus b,b)=((a\plus b)^2,(a\plus b)^2).\]
From the first condition, we obtain that either $((a\plus b)^2,0)$, $(a\plus b,a)$ or $(a\plus b,b)$ is in $P$ for all $a,b\in A$.
\end{proof}

As a consequence we see that if $C$ is a prime congruence, then $A/C$ is a union of totally ordered chains with sums of elements in different chains being $0$. 

\begin{Lemma}
Let $C$ be a prime congruence in an $\F_\infty$-algebra $A$ so that in $A/C$ we have that $a>b$ and $c>d$. Then either $ac>bd$ or $(ac,0)\in C$.
\end{Lemma}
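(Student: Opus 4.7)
The plan is to unpack the strict inequality $a>b$ in $A/C$ as saying $(a+b,b)\in C$ together with $(a,b)\notin C$ (recall that in this setting $+$ is the meet, so $x+y=y$ means $x\ge y$), and similarly for $c>d$. A routine distributive computation---multiplying $a+b\equiv b$ by $c$, by $d$, and $c+d\equiv d$ by $a$, by $b$---yields the chain
\[
bd\le bc\le ac\quad\text{and}\quad bd\le ad\le ac
\]
in $A/C$. In particular $ac\ge bd$, so either this inequality is strict (and the first disjunct of the lemma holds) or else $(ac,bd)\in C$.

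In the latter case, the chain collapses to equalities modulo $C$: $bd\equiv bc\equiv ac\pmod C$. In particular $(ac,bc)\in C$, which by the second clause in the definition of a prime congruence gives $(a,b)\in C$ or $(c,0)\in C$. The former is excluded by the strictness of $a>b$, so $(c,0)\in C$. Multiplying this relation by $a$ yields $(ac,0)\in C$, which is the second disjunct of the lemma.

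The only real obstacle is keeping the order conventions straight, since $+$ here functions as a meet rather than a join, so all of the derived inequalities run in the direction opposite from what one might first expect. Once this is in place the argument is essentially forced: only prime condition~2 is needed (applied to the pair $(ac,bc)$, or equivalently to $(bc,bd)$ or $(ad,bd)$), and the strictness of either $a>b$ or $c>d$ is exactly what is required to rule out the unwanted branch of the disjunction.
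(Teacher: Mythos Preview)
Your proof is correct and follows the same overall structure as the paper's: establish the chain $ac\ge bc\ge bd$ and $ac\ge ad\ge bd$ in $A/C$, then argue that if $ac=bd$ the chain collapses and primality forces $(ac,0)\in C$. The only difference is in the final step: where you invoke the second prime condition on the pair $(ac,bc)$ to conclude $(c,0)\in C$ (and hence $(ac,0)\in C$), the paper instead observes that the collapse gives $(a,b)(c,d)=(ac+bd,ad+bc)\in C$ and applies the first prime condition, ruling out $(a,b)\in C$ and $(c,d)\in C$ by strictness so that one of $(ac+bd,0)$, $(ad+bc,0)$---both equal to $(ac,0)$ modulo $C$---must hold. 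Your route via cancellation is arguably tidier since it avoids the four-way disjunction; the paper's route has the mild conceptual advantage of going through the ``twisted product'' $(a,b)(c,d)$ that motivates the whole definition of prime.
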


\begin{proof}
The statements $ac\geq ad\geq bd$ and $ac\geq bd\geq bd$ hold for any congruence. On the other hand if $ac=bd$, then $ac=ad=bc=bd$ has to hold as well, and hence $(a,b)(c,d)=(ac\plus bd,ad\plus bc)\in C$. The latter implies that $(ac,0)\in C$.
\end{proof}

Moreover we can take roots in prime congruences.

\begin{Lemma}
Let $C$ be a prime congruence of an $\Finfty$-algebra $A$. Assume that $(a^n,b^n)\in C$ for some $(a,b)\not \in C$. Then $(a\plus b,0)\in C$.
\end{Lemma}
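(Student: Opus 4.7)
My approach is proof by contradiction, combining the trichotomy lemma, the preceding multiplicative lemma, and the earlier zero-divisor lemma for prime congruences. Suppose $(a,b)\notin C$ and $(a^n,b^n)\in C$, and further suppose for contradiction that $(a+b,0)\notin C$. By the trichotomy lemma, in $A/C$ any two elements are comparable or have zero sum; since the sum case is excluded and $a\neq b$ in $A/C$, we may assume by symmetry that $a>b$ in $A/C$.

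The key step is to prove by induction on $k\geq 1$ that either $a^k>b^k$ in $A/C$ or $(a^k,0)\in C$. The base case $k=1$ is the reduction above. For the inductive step, if $a^k>b^k$ in $A/C$, apply the preceding multiplicative lemma to the pairs $a^k>b^k$ and $a>b$ to obtain $a^{k+1}>b^{k+1}$ in $A/C$ or $(a^{k+1},0)\in C$; if instead $(a^k,0)\in C$, then multiplying that congruence by $a$ gives $(a^{k+1},0)\in C$ directly. Either way the claim passes to $k+1$.

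Now specialize to $k=n$. If $a^n>b^n$ strictly in $A/C$, this contradicts the hypothesis $(a^n,b^n)\in C$. Otherwise $(a^n,0)\in C$, and iterated application of the earlier zero-divisor lemma $(xy,0)\in C\Rightarrow(x,0)\in C\text{ or }(y,0)\in C$ forces $(a,0)\in C$. Combining $(a^n,0)\in C$ with $(a^n,b^n)\in C$ by transitivity of $C$ yields $(b^n,0)\in C$, whence $(b,0)\in C$ by the same zero-divisor lemma. Thus $a$ and $b$ both vanish in $A/C$, so $(a,b)\in C$, contradicting the standing hypothesis.

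I expect the only mild subtlety to be bookkeeping the two branches of the induction uniformly; no serious obstacle arises, because the multiplicative lemma provides exactly the dichotomy needed at each step, and the zero-divisor lemma closes off the annihilation branch to produce the final contradiction.
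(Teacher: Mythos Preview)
Your proof is correct and follows essentially the same route as the paper: invoke the trichotomy lemma, then use the multiplicative lemma to rule out the strict-inequality cases. The paper's version is terser because the annihilation branch you track is actually vacuous: once $a>b$ in $A/C$ you have $a\neq 0$ (as $0$ is the least element), and primality then gives $(a^k,0)\notin C$ for all $k$, so $a^k>b^k$ holds outright at every step.
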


\begin{proof}
Since $C$ is a prime congruence, therefore in $A/C$, we have $a>b$ or $b>a$ or $a\plus b=0$. The first two cases cannot hold, since $a^n=b^n$.
\end{proof}

Now, we characterize the prime congruences of $\Finfty[x]$. Recall that elements of $\Finfty[x]$ are $0$ and polynomials of the form $\sum_{i\in I} \lambda_{i} x^{i}$ where $\lambda_i=\pm 1$ and $I\subseteq\mathbb{Z}$ is finite. We begin with a simple lemma.

\begin{Lemma}\label{lem:1+x}
Let $A$ be an $\Finfty$-algebra and assume that $(1\plus x^n,1)\in C$ and $(1\plus x^m,1)\in C$ hold for a congruence $C$ for some $n>m$. Then $(1\plus x^{n\plus m},1)\in C$. Moreover if $C$ is prime, then $(1\plus x^{n-m},1)$ is also in $C$.
\end{Lemma}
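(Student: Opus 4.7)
The plan is to translate the hypotheses into statements about the natural partial order on $A/C$ (Proposition \ref{thm:natorder}): $(1+x^k,1)\in C$ is exactly $1\le x^k$ in $A/C$. With this dictionary, the two claims become $1\le x^{n+m}$ (general $C$) and $1\le x^{n-m}$ ($C$ prime).

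The first claim is immediate from monotonicity of multiplication proved in Section 3. From $1\le x^m$ I multiply by $x^n$ to get $x^n\le x^{n+m}$; combined with $1\le x^n$ by transitivity this gives $1\le x^{n+m}$, i.e.\ $(1+x^{n+m},1)\in C$. Nothing beyond the congruence axioms is used.

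For the second claim I first note that since $C$ is prime, $A/C$ is a domain (apply the second prime condition with $b=0$ and iterate), so either $x=0$ in $A/C$, in which case $x^{n-m}=0$ and the conclusion is trivial, or every power $x^k$ is nonzero. Assume the latter. By the trichotomy for prime congruences, $1$ and $x^{n-m}$ satisfy exactly one of: $1\le x^{n-m}$ (what we want), $x^{n-m}<1$, or $1+x^{n-m}=0$ in $A/C$. I rule out the last two. If $x^{n-m}<1$, raising to the $m$-th power gives $x^{m(n-m)}\le 1$, while raising $1\le x^m$ to the $(n-m)$-th power gives $1\le x^{m(n-m)}$; antisymmetry forces $x^{m(n-m)}=1$, i.e.\ $\bigl((x^{n-m})^m,1^m\bigr)\in C$. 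The root lemma for primes then yields either $(x^{n-m},1)\in C$ (contradicting the strict inequality) or $(1+x^{n-m},0)\in C$, which together with $x^{n-m}\le 1$ forces $x^{n-m}=0$, contradicting that powers of $x$ are nonzero.

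The case $1+x^{n-m}=0$ is what I expect to be the main obstacle, since naive moves like multiplying by $x^m$ to obtain $x^m+x^n=0$ are consistent with the rest of the hypotheses and produce no contradiction. The trick I plan to use is to multiply the congruence $(1+x^{n-m},0)\in C$ by the element $1+x^m\in A$. The left side expands to $1+x^m+x^{n-m}+x^n$; using $(1+x^m,1)\in C$ it simplifies modulo $C$ to $1+x^{n-m}+x^n$, and then using $(1+x^{n-m},0)\in C$ it further simplifies to $x^n$. The right side, $0\cdot(1+x^m)$, remains $0$. Hence $(x^n,0)\in C$, contradicting $x\ne 0$ in the domain $A/C$. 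This exhausts the trichotomy, proving $1\le x^{n-m}$.
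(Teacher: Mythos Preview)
Your overall strategy is the same as the paper's: translate to the order on $A/C$, prove the first claim by monotonicity of multiplication, and for the prime case run the trichotomy on $1$ and $x^{n-m}$. The first claim and the case $x^{n-m}<1$ are handled correctly, and your use of the root lemma there is fine.

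The gap is in the last case, $1+x^{n-m}=0$. You multiply $(1+x^{n-m},0)$ by $1+x^m$ and, after reducing $1+x^m$ to $1$, reach $1+x^{n-m}+x^n$; you then use $1+x^{n-m}=0$ and claim this ``simplifies to $x^n$''. But in an $\Finfty$-module $0$ is \emph{absorbing}, not neutral: $0+x^n=0$, not $x^n$. So your computation collapses to $0=0$ and yields no contradiction. (In fact the product $(1+x^{n-m})(1+x^m)$ is already $0$ in $A/C$ since the first factor is $0$, so this route cannot produce anything new.)

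The fix is a one-line change of multiplier: multiply $(1+x^{n-m},0)$ by $x^m$ instead, giving $x^m+x^n=0$ in $A/C$. Since $1\le x^m$ and $1\le x^n$, the order properties in Section~3 give $1\le x^m+x^n=0$, i.e.\ $1+0=1$; but $1+0=0$, so $1=0$, contradicting that $C$ is proper. This is exactly how the paper disposes of this case.
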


\begin{proof}
Since $(1\plus x^n,1)\in C$ and $(1\plus x^m,1)\in C$, therefore
\[(1\plus x^n\plus x^m\plus x^{n\plus m},1)\]
also holds in $C$ which implies that $(1\plus x^{n\plus m},1)\in C$. 

Now, assume that $C$ is prime. Consider $1$ and $x^{n-m}$. Since $C$ is prime, one of the following holds in $C$:
\begin{itemize}
\item $(1\plus x^{n-m},0)$: In this case we get that $(x^m\plus x^n,0)$ holds, but this cannot be true, since $1\plus x^n=1\plus x^m=1$ in $A/C$.
\item $(1\plus x^{n-m},x^{n-m})$: In this case $1\plus x^{n-m}\plus x^{2(n-m)}\plus ...\plus x^{m(n-m)}=x^{m(n-m)}$ and also it equals to $1\plus x^{m(n-m)}=1$ in $A/C$, therefore $(x^{m(n-m)},1)$ holds in $A/C$, meaning that $(x^{m-n},1)$ holds in $A/C$, and we are done.
\item $(1\plus x^{n-m},1)$: We are done.
\end{itemize}
\end{proof}

We are ready to compute the prime congruences of $\Finfty[x]$.

\begin{Theorem}
\label{thm:polyprimes}
The prime congruences of $\Finfty[x]$ are the following.
\begin{enumerate}
\item The congruence generated by $(1\plus x,x)$.
\item The congruence generated by $(x,1)$.
\item The congruence generated by $(x,0)$.
\item The congruence generated by $(1\plus x,1)$.
\item The congruence generated by $(-1\plus x,x)$.
\item The congruence generated by $(x,-1)$.
\item The congruence generated by $(-1\plus x,-1)$.
\item Every $n>0$, the prime congruence $P$ generated by $(1\pm x,0)$, $(1\pm x^2,0)$, ... $(1\pm x^{n-1},0)$ and $(1\plus x^n,x^n)$.
\item Every $n>0$, the prime congruence $P$ generated by $(1\pm x,0)$, $(1\pm x^2,0)$, ... $(1\pm x^{n-1},0)$ and $(1\plus x^n,1)$.
\item Every $n>0$, the prime congruence $P$ generated by $(1\pm x,0)$, $(1\pm x^2,0)$, ... $(1\pm x^{n-1},0)$ and $(-1\plus x^n,x^n)$.
\item Every $n>0$, the prime congruence $P$ generated by $(1\pm x,0)$, $(1\pm x^2,0)$, ... $(1\pm x^{n-1},0)$ and $(-1\plus x^n,-1)$.
\item Every $n>0$, the prime congruence $P$ generated by $(1\pm x,0)$, $(1\pm x^2,0)$, ... $(1\pm x^{n-1},0)$ and $(x^n,1)$.
\item Every $n>0$, the prime congruence $P$ generated by $(1\pm x,0)$, $(1\pm x^2,0)$, ... $(1\pm x^{n-1},0)$ and $(x^n,-1)$.
\item The prime congruence $P$ generated by $(1\pm x,0)$, $(1\pm x^2,0)$, ...
\end{enumerate}
\end{Theorem}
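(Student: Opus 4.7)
My plan is to verify that each of the fourteen listed congruences $C$ really is prime by exhibiting its quotient $\Finfty[x]/C$ as a union of totally-ordered chains with zero sums across distinct chains, which is the structure forced by the lemmas preceding the theorem. In cases (1), (4), (5), (7) the quotient has two chains consisting of a decreasing or increasing sequence of powers of $x$ attached below or above $\pm 1$; in cases (2), (6) the variable $x$ collapses to $\pm 1$ and the quotient is $\Finfty$; in case (3) it collapses to $0$; in cases (8)--(11) one obtains $n$ distinct chains indexed by residues modulo $n$, each a monotone sequence; cases (12)--(13) yield the group algebra $\Finfty[\Z/n\Z]$; and case (14) yields the group algebra $\Finfty[\Z]$. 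In each case the two defining conditions for a prime congruence follow directly from the chain structure and the fact that the quotient has no zero-divisors other than those forced by the sum relation.

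For the converse, I would fix a prime congruence $C$ and set $A = \Finfty[x]/C$, then classify $C$ by the position of each power $x^n$ in the chain decomposition of $A$. The chains of $1$ and $-1$ are always distinct, because otherwise comparability of $1$ and $-1$ together with $1 + (-1) = 0$ would force $C$ to be improper. For every $n \geq 1$, exactly one of the following holds in $A$: $x^n = 0$; $x^n$ lies in the chain of $1$; $x^n$ lies in the chain of $-1$; or $x^n$ sits in a chain disjoint from both, equivalently $(1 \pm x^n, 0) \in C$. If some $x^n = 0$ then primality forces $(x, 0) \in C$, yielding case (3); otherwise $x$ is not a zero-divisor in $A$ and I define $n = n(C)$ to be the least positive integer at which the fourth alternative fails, with $n = \infty$ if it never does. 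The value $n = \infty$ yields case (14); $n = 1$ yields one of cases (1), (2), (4), (5), (6), (7) depending on whether $x$ is strictly below, equal to, or strictly above $1$ or $-1$; and $2 \leq n < \infty$ yields one of cases (8)--(13) depending on the position of $x^n$ relative to $\pm 1$, with the relations $(1 \pm x^i, 0) \in C$ for $i < n$ coming from the minimality of $n$.

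The main obstacle is showing that in each case the listed generators already suffice. Given them together with Lemma~\ref{lem:1+x} and its multiplicative translates, I must determine the chain position of every $x^m$ for $m > n$ and then reduce each relation $(p, q) \in C$ between polynomials to an iterated consequence of the listed generators. For the first task, multiplying the degree-$n$ relation by $x^k$ and combining with the relations $(x^k \pm x^{k+i}, 0)$ for $0 < i < n$ forces, in cases (8)--(11), a monotone sequence of powers within each residue class modulo $n$, and in cases (12)--(13), periodicity with period $n$; the subtle point here is that Lemma~\ref{lem:1+x} closes the generated structure under the subtraction $n - m$ of exponents, so no additional identifications can sneak in. For the second task, a general polynomial $p$ in $A$ decomposes as a sum whose nonzero contributions all land in a single chain, because cross-chain sums vanish, so the relation $(p, q) \in C$ amounts to a monomial comparison within that chain, which is already entailed by the listed generators.
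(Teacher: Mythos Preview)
Your approach is essentially the same as the paper's: both arguments hinge on the trichotomy lemma (for each pair $a,b$ in a prime quotient, $a\ge b$, $b\ge a$, or $a+b=0$), on Lemma~\ref{lem:1+x} to control the set of exponents $m$ with $1+x^m$ comparable to $1$, and on a case split according to where $x$ (or the first nontrivial power $x^n$) sits relative to $\pm 1$. The paper organizes the split by first asking which of $(1+x,x)$, $(1+x,1)$, $(1+x,0)$ holds and then, in the third case, looking for the least $n$ with $1\pm x^n\ne 0$; your parameter $n(C)$ packages the same dichotomy. Your presentation is somewhat cleaner in separating ``each listed congruence is prime'' from ``every prime is one of these,'' whereas the paper interleaves the two; conversely, the paper is more explicit about \emph{why} the minimal congruences in cases (1) and (4) are prime (``degree is additive''), where your appeal to ``chain structure plus no zero-divisors'' is correct in these concrete quotients but would benefit from the same one-line justification, since chain structure alone does not formally imply cancellativity.
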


\begin{proof}
Let $P$ be a prime congruence. We have three cases:
\begin{enumerate}
\item $1\plus  x=x$ holds in $\Finfty[x]/P$
\item $1\plus x=1$ holds in $\Finfty[x]/P$
\item Neither of the above, in particular $1\plus x=0$ holds in $\Finfty[x]/P$.
\end{enumerate}

We investigate all cases:
\begin{enumerate}
\item $1\plus x=x$: Let $P$ be the smallest congruence so that $1\plus x=x$ holds in $\Finfty[x]/P$. In this case, we can replace any polynomial with its highest degree term in $\Finfty[x]/P$ if all coefficients are equal, otherwise $0$. We see that the corresponding smallest congruence is indeed prime, because degree is additive. 

Is there any prime congruence $Q$ containing $P$? If $Q$ is any other congruence, then either $(x^n,x^m)\in Q$ or $(x^n,0)\in Q$. (If $(x^n,-x^m)\in Q$, then $x^n-x^m=0$ implies that $(x^n,0)\in Q$) If $(x^n,x^m)\in Q$ then $(1,x^{m-n})\in Q$ (without loss of generality, we can assume $m>n$), but $x\geq 1$, so $(x,1)\in Q$ and in this case $\Finfty[x]/Q=\Finfty$. Similarly, if $(x^n,0)\in Q$, then $(x,0)\in Q$, and in this case $\Finfty[x]/Q=\Finfty$.

\item $1\plus x=1$: Let $P$ be the smallest congruence so that $1\plus x=1$ holds in $\Finfty[x]/P$. In this case, we can replace any polynomial with its smallest degree term in $\Finfty[x]/P$ if all coefficients are equal, otherwise $0$. We see that the corresponding smallest congruence is indeed prime, because degree is additive. 

Is there any prime congruence $Q$ containing $P$? If $Q$ is any other congruence, than either $(x^n,x^m)\in Q$ or $(x^n,0)\in Q$. If $(x^n,x^m)\in Q$ then $(1,x^{m-n})\in Q$ (without loss of generality, we can assume that $m>n$), but $x\leq 1$, so $(x,1)\in Q$ and in this case $\Finfty[x]/Q=\Finfty$. If $(x^n,0)\in Q$, then $(x,0)\in Q$, which contradicts $1\plus x=1$.

\item Last case: In this case, neither of the above holds. We can also assume that neither $-1\plus x=x$ nor $-1\plus x=-1$ holds because we can replace $x$ by $-x$ and we receive one of the cases above. Let $P$ be any prime congruence in this case. We have basically three cases: $(1\pm x^n,0)\in P$ for every $n$ or $(1\plus  x^n,1)\in P$ for some $n$ or $(1\plus x^n,x^n)\in P$ for some $n$ or $(-1\plus x^n,-1)\in P$ for some $n$ or $(-1\plus  x^n,x^n)\in P$ for some $n$. First, we assume that there is an expression $1\pm x^n$ which is not 0. Let $n$ be a smallest such $n$, and moreover assume that we have $(1\plus x^n,1)\in P$. Then, by Lemma \ref{lem:1+x}, the $m$'s satisfying $(1\plus x^m,1)\in P$ have to be divisible by this $n$. We see that the smallest congruence satisfying this condition is prime. Can a congruence $Q$ contain this congruence? We see that it can only happen if $x^n=1$ in that congruence. We leave to the reader to complete the cases when $(-1\pm x^n,-1)$ or $(\pm 1\plus x^n,x^n)$ is in $P$.

Finally, we have the case that $(1\pm x^n,0)$ holds for every $n$. We can see that the smallest such congruence $P$ is prime. Can there be any prime congruence $Q$ containing $P$? Since every polynomial containing at least 2 monomials is identified with $0$, hence the only possibility is that $x^n$ is identified with $x^m$. In that case we get that $x^n=0$ or $x^{n-m}=1$ (this cannot hold). Therefore $x=0$.
\end{enumerate}
\end{proof}

Geometrically, the prime congruences generated by $(x,\pm 1)$ correspond to evaluation at $x=\pm 1$, and the prime congruence generated by $(x,0)$ corresponds to evaluation at $x=0$. Furthermore the prime congruences listed in 12. and 13. are listed in Section \ref{sec:mod} in the Example part. These are finite field extensions of $\Finfty$.

Therefore, we obtain that the geometry of $\Spec \Finfty[x]$ is very similar to $\Spec \Z/p\Z[x]$, for instance the closed points of $\Spec \Finfty[x]$ correspond to elements of $\Finfty$ and some finite extensions of $\Finfty$.

\section{Krull dimension}
In this section we prove that the Krull dimension of a polynomial algebra over $\Finfty$ is the number of indeterminants.

We say that an $\Finfty$-algebra $A$ has Krull dimension $n$ if the longest chain of prime congruences has length $n+ 1$. We begin with an easy lemma.

\begin{Lemma}
Let $A$ be an $\Finfty$-algebra of Krull dimension $n$. Then the dimension of $A[x]$ is at least $n+ 1$. 
\end{Lemma}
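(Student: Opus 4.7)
The plan is to lift a maximal chain of prime congruences from $A$ to $A[x]$ and then extend it by one more prime at the top via an evaluation-style quotient. Fix a chain $P_0 \subsetneq P_1 \subsetneq \cdots \subsetneq P_n$ of prime congruences in $A$ realizing the Krull dimension. For each $i$, I will construct a prime congruence $\tilde P_i$ of $A[x]$ whose restriction to the constants recovers $P_i$, check that these assemble into a strictly ascending chain, and then produce one further prime $Q \supsetneq \tilde P_n$.

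For the extension, given a prime $P$ of $A$, let $\tilde P$ be the congruence on $A[x]$ generated by $P$ (viewed inside $A[x]$ via the inclusion $A \hookrightarrow A[x]$). Because $A[x]$ is freely generated as an $A$-module by the monomials $x^k$, the quotient is canonically $A[x]/\tilde P \cong (A/P)[x]$, with $(\sum a_k x^k,\sum b_k x^k) \in \tilde P$ iff $(a_k,b_k)\in P$ for all $k$. If $P_i \subsetneq P_{i+1}$, pick $(a,b)\in P_{i+1}\setminus P_i$; then the same pair, viewed as constant polynomials, lies in $\tilde P_{i+1}\setminus\tilde P_i$, so the extended chain $\tilde P_0 \subsetneq \cdots \subsetneq \tilde P_n$ is strict.

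The main obstacle is showing $\tilde P_i$ is prime in $A[x]$. Equivalently, one must check that if $P$ is prime in $A$, then the diagonal of $(A/P)[x]$ is a prime congruence. Using the structure lemma proved earlier (the quotient by a prime congruence is a union of totally ordered chains with cross-chain sums vanishing), one can check both prime conditions by a leading-term argument: given polynomials $a,b,c,d\in(A/P)[x]$ with $ac+bd=ad+bc$, compare leading terms in the induced chain order on $(A/P)[x]$ (ordered first by degree, then by leading coefficient inside $A/P$) to conclude that either $a=b$, $c=d$, or one of the products already vanishes — and analogously for the cancellation condition $ac=bc$ with $c \ne 0$. The verification is routine but requires case analysis based on whether leading terms lie in the same chain of $A/P$ or not.

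Finally, I extend the chain at the top. Define $Q$ as the congruence on $A[x]$ generated by $\tilde P_n$ together with the pair $(x,0)$. Because $(x,0)$ identifies $x$ with the zero element, the composition $A[x] \twoheadrightarrow A[x]/Q$ is exactly evaluation at $x=0$ followed by projection to $A/P_n$, so $A[x]/Q \cong A/P_n$. Since $P_n$ is prime in $A$, its quotient is a prime quotient, hence $Q$ is a prime congruence in $A[x]$. The strict inclusion $\tilde P_n \subsetneq Q$ holds because $(x,0)\notin \tilde P_n$: the degree-$1$ coefficient of $x$ is $1\in A$, while that of $0$ is $0$, and $(1,0)\notin P_n$ since $P_n$ is proper and $A/P_n$ is nontrivial. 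This produces the desired chain $\tilde P_0\subsetneq\cdots\subsetneq\tilde P_n\subsetneq Q$ of length $n+2$ in $A[x]$, establishing the lemma.
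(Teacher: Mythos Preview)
Your central claim---that the trivial extension $\tilde P_i$ is a prime congruence of $A[x]$---is false, and the leading-term sketch cannot save it. Take the base case $A=\Finfty$ with its unique prime $P_0=\Delta$, so that $\tilde P_0$ is the diagonal of $\Finfty[x]$. In $\Finfty[x]$ one has
\[
(1+x)(1-x)=1+(-x)+x+(-x^2)=1+0+(-x^2)=0,
\]
because $x+(-x)=0$ and $0$ is absorbing. Now test the cancellation clause of the prime definition with $a=1+x$, $b=0$, $c=1-x$: we get $(ac,bc)=(0,0)\in\Delta$, yet neither $(1+x,0)$ nor $(1-x,0)$ lies in $\Delta$. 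Hence $\Delta$ is not prime in $\Finfty[x]$, so your chain begins with a non-prime. A leading-term comparison is powerless here since both products vanish identically; and indeed, if you look at the paper's explicit list of primes of $\Finfty[x]$, the diagonal is not among them.

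The paper's argument avoids ever asking the diagonal of a polynomial ring to be prime. It passes to $A/P$ for $P$ minimal in a maximal chain and works with the congruence generated by $(x,0)$ in $(A/P)[x]$: that congruence \emph{is} prime, since its quotient is $A/P$, whose diagonal is prime by hypothesis. The chain of primes in $A/P$ then lifts to a chain of primes in $(A/P)[x]$ all containing $(x,0)$, and the extra step comes from $(x,0)$ being a genuinely new relation. In other words, the paper attaches $(x,0)$ at the \emph{bottom} of the lift rather than at the top, so it only ever needs quotients of the form $A/P_i$---never $(A/P_i)[x]$---to carry a prime diagonal.
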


\begin{proof}
Let $P$ be the minimal element of a maximal chain of prime congruences of $A$. Then $\dim A/P=\dim A$. Moreover $\dim A/P[x]\geq \dim A/P+ 1$, since the congruence generated by $(x,0)$ is prime. Therefore
\[\dim A[x]\geq \dim A/P[x]\geq \dim A/P+ 1=\dim A+ 1.\]
\end{proof}

The next theorem gives a criterion when the dimension of $A[x]$ exceeds $n+1$.

\begin{Theorem}
\label{thm:krull}
Let $A$ be an $\Finfty$-algebra of Krull dimension $n$. Then either
\begin{itemize}
\item $A[x]$ is of dimension $n+ 1$.
\item $A[x]$ is of dimension at least $n+ 2$, furthermore in this case there exist four distinct prime congruences in this chain  $P_1\subset P_2\subset P_3\subset P_4$ so that there exist $a,b\in A$ so that $(a,0)$, $(b,0)$ are in $P_3$ and for some $j$, $(ax^j,b)\in P_2\setminus P_1$.
\end{itemize}
\end{Theorem}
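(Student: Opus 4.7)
The plan is to study chains of prime congruences in $A[x]$ by contracting them to $A$. For a prime congruence $P$ of $A[x]$, set $\pi(P) := P \cap (A \times A)$; the two defining conditions of a prime congruence restrict to pairs in $A \times A$, so $\pi(P)$ is itself a prime congruence of $A$. Given any chain $R_0 \subsetneq R_1 \subsetneq \cdots \subsetneq R_N$ in $A[x]$, the induced sequence of contractions is weakly ascending and takes at most $n+1$ distinct values in $\Spec A$, so at least $N - n$ of the consecutive pairs $(R_i, R_{i+1})$ share a common contraction.

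The central technical step I would prove is a fiber lemma: if $P \subsetneq P' \subsetneq P''$ are three primes of $A[x]$ all contracting to the same $Q$ on $A$, then $(x,0) \in P''$. After reducing modulo $Q$ to $B := A/Q$ (so the zero congruence on $B$ is prime), I would take a witness pair $(p, p')$ in $P' \setminus P$ and decompose $p, p'$ into monomials in $B[x]$. Applying the two prime conditions to carefully chosen groupings of these monomials, the options $(a,b) \in P$ or $(c,d) \in P$ are ruled out by $P \subsetneq P'$, while the boundary options $(ac+bd, 0), (ad+bc, 0) \in P$ force monomial-level identities; iterating once more against the step $P' \subsetneq P''$ collapses these identities to $(x, 0) \in P''$.

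With the fiber lemma in hand, the dimension bound follows from pigeonhole: a chain of length $n + 2$ (i.e.\ $n + 3$ primes) has at least two non-jump steps in its contraction chain. If they are adjacent, three primes share a contraction and the fiber lemma supplies $(x, 0)$ in the topmost; if they are separated, running the fiber lemma on the upper non-jump again yields $(x, 0)$ in the top prime of that fiber, and a short further argument either merges the two cases or produces a contradiction. Hence $\dim A[x] \le n+2$.

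For the structural statement, when $\dim A[x] = n + 2$, I would fix a chain of length $n + 3$ and let $P_4$ be the top of the fiber where $(x, 0)$ first enters, with $P_3$ the prime directly below it. The elements $a, b \in A$ with $(a, 0), (b, 0) \in P_3$ are then drawn from the kernel growth between $\pi(P_2)$ and $\pi(P_3)$, and the relation $(ax^j, b) \in P_2 \setminus P_1$ is extracted from a witness pair for a lower non-jump step, after normalizing its monomials so that a single power $x^j$ pairs against a constant $b \in A$. The principal obstacle I expect is the fiber lemma itself: the monomial case analysis bridging the two prime conditions and the conclusion $(x, 0) \in P''$, especially when $B = A/Q$ is only assumed to be an $\Finfty$-algebra whose zero congruence is prime (rather than a field), will require careful use of both prime conditions established in the preceding section.
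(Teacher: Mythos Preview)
Your contraction setup $\pi(P)=P\cap(A\times A)$ and the pigeonhole count on non-jump steps match the paper exactly. The divergence is in the core technical lemma. You propose a \emph{fiber lemma}: three primes $P\subsetneq P'\subsetneq P''$ over the same $Q$ force $(x,0)\in P''$. The paper instead proves something stronger and differently shaped: given \emph{any} two non-jump pairs $P_1\subsetneq P_2$ and $P_3\subsetneq P_4$ in a single chain (with $P_2\subseteq P_3$, the two pairs possibly lying over \emph{different} contractions), the upper pair must be the step where $(x,0)$ enters. Your fiber lemma is the special case $P_2=P_3$; it does not cover separated non-jump steps.

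This is exactly where your sketch breaks. When the two non-jump steps are not adjacent, each fiber contains only two primes, so your fiber lemma is inapplicable, and the sentence ``running the fiber lemma on the upper non-jump again yields $(x,0)$'' has no content. This separated case is not a short further argument: it is the main computation. The paper handles it by first reducing each witness pair to a relation between \emph{monomials} (using the earlier lemma that modulo a prime congruence any two elements are comparable or sum to $0$, not merely the two prime axioms), then cancelling to obtain $(ax^{j},b)\in P_2\setminus P_1$ and $(cx^{k},d)\in P_4\setminus P_3$. Raising to complementary powers produces a relation $(b^{k}c^{j},\,d^{j}a^{k})$ that lives entirely in $A\times A$; since $P_3|_A=P_4|_A$, this relation transports from $P_4$ down to $P_3$, and combining it there with $(ax^{j},b)\in P_2\subseteq P_3$ forces $(cx^{k},d)\in P_3$, a contradiction. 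That transport across fibers via a pure-$A$ relation is the idea you are missing, and no amount of within-one-fiber analysis will supply it.

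Two smaller points. First, your reduction of a witness $(p,p')$ to monomials should invoke the total-order lemma for $A[x]/P_1$ explicitly; the bare prime axioms do not obviously collapse an arbitrary polynomial to a monomial. Second, once the two-pair statement is established, the bound $\dim A[x]\le n+2$ follows because any \emph{third} non-jump step would, paired with an earlier one, also have to be the $(x,0)$ step, which can occur only once; your pigeonhole argument should be phrased this way rather than through the fiber lemma.
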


\begin{proof}
If the Krull dimension of $A[x]$ is at least $n+2$, then there exist prime congruences $P_1\subset P_2\subset P_3\subset P_4$ of $A[x]$ so that $P_1|_A=P_2|_A$ and $P_3|_A=P_4|_A$ (using the natural map $A\ra A[x]$) so that $P_1\ne P_2$ and $P_3\ne P_4$. (Note that we do not assume that $P_2$ and $P_3$ are distinct). 

Since each prime congruence contains $(a\plus b,a)$, $(a\plus b,b)$ or $(a\plus b,0)$ for any elements $a,b\in A[x]$, hence we can assume that for any $(p(x),q(x))\in P_2\setminus P_1$, we have that $p(x)$ and $q(x)$ are either monomials or one of them is $0$. On the other hand if $(p(x),0)\in P_2\setminus P_1$, then $p(x)$ can be replaced by a monomial in $P_1$ and hence in $P_2$ as well. Hence, we can always assume that for any $(p(x),q(x))\in P_2\setminus P_1$ we have that $p(x)$ and $q(x)$ are monomials (or one of them is $0$).

The same holds for any pair $(p(x),q(x))\in P_4\setminus P_3$. Notice that if $(ax^n,0)\in P_{i+1}\setminus P_i$ (for $i=1$ or $3$), then from the prime property we get that either $(a,0)\in P_{i+1}\setminus P_i$ (which contradicts our original assumptions) or $(x,0)\in P_{i+1}\setminus P_i$. We separate cases.

\begin{enumerate}
\item First, we assume that there are monomials so that we have that 
\[(ax^n,bx^m)\in P_2\setminus P_1\]
and
\[(cx^k,dx^l)\in P_4\setminus P_3\]
and furthermore $(x,0)$ does not hold in any of the congruences. Then, we can use the cancellation property and we obtain that (assuming that $n\geq m$ and $k\geq l$)
\[(ax^{n-m},b)\in P_2\setminus P_1\]
and
\[(cx^{k-l},d)\in P_4\setminus P_3.\]
We see that
\[(a^{k-l}c^{n-m}x^{(n-m)(k-l)},b^{k-l}c^{n-m})\quad \mbox{and}\quad (a^{k-l}c^{n-m}x^{(n-m)(k-l)},d^{n-m}a^{k-l})\]
are contained in $P_4$, therefore $(b^{k-l}c^{n-m},d^{n-m}a^{k-l})\in P_4$. Since $P_4$ and $P_3$ are the same once they are restricted to $A$, therefore 
\[(b^{k-l}c^{n-m},d^{n-m}a^{k-l})\in P_3\]
implying that
\[(b^{k-l}c^{n-m}x^{(n-m)(k-l)},d^{n-m}a^{k-l}x^{(n-m)(k-l)})\in P_3.\]
Furthermore, since $(ax^{n-m},b)\in P_2\subset P_3$ we obtain that
\[(b^{k-l}c^{n-m}x^{(n-m)(k-l)},d^{n-m}b^{k-l})\in P_3.\]
Now, assume that $(b,0)\not \in P_3$, then 
\[(c^{n-m}x^{(n-m)(k-l)},d^{n-m})\in P_3.\]
In this case, either $(cx^{k-l},d)\in P_3$ or $(cx^{k-l}\plus d,0)\in P_3$. The first possibility is clearly impossible. If $(cx^{k-l}\plus d,0)\in P_3$, then since $(c^{k-l},d)\in P_4$, we obtain $(d,0)\in P_4$. This implies that $(d,0)\in P_3$, so $(cx^{k-l},0)\in P_3$ which yields contradiction.

Therefore we have that $(b,0)\in P_3$, which implies that $(a,0)\in P_3$ as well.

\item If $(x,0)\in P_2\setminus P_1$, then clearly $(cx^k,dx^l)\in P_4\setminus P_3$ cannot hold.

\item If $(x,0)\in P_{4}\setminus P_3$, then $(ax^n,bx^m)\in P_2\setminus P_1$ implies that $(ax^{n-m},b)\in P_2\setminus P_1$, so $(b,0)\in P_4$ and thus in $P_3$ as well. This implies that $(a,0)\in P_3$ and hence the statement holds.
\end{enumerate}
\end{proof}

\begin{remark}
If for a monomial $p(x_1,...,x_n)$ of $\Finfty[x_1,...,x_n]$ and a prime congruence $P$ we have $(p(x_1,...,x_n),0)\in P$, then for one of the variables $x_i$ of $x_1, ... ,x_n$ we have $(x_i,0)\in P$.
\end{remark}

\begin{Corollary}\label{cor:krull}
The Krull dimension of $\Finfty[x_1,...,x_n]$ is exactly $n$.
\end{Corollary}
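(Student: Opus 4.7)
I plan to induct on $n$. For the base case $n=0$ I must show $\dim\Finfty=0$: in $\Finfty=\{-1,0,1\}$ any identification of two distinct elements quickly collapses the whole algebra (using $1+1=1$ and $1+(-1)=0$), so $\Delta$ is the only proper congruence, and a direct check of the two prime axioms confirms $\Delta$ is prime, whence $\dim\Finfty=0$.

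For the inductive step, put $A:=\Finfty[x_1,\ldots,x_{n-1}]$, which has dimension $n-1$ by the inductive hypothesis. The lemma preceding the previous theorem gives $\dim A[x_n]\geq n$. The previous theorem gives $\dim A[x_n]\leq n+1$, with equality possible only when the pathological chain $P_1\subsetneq P_2\subsetneq P_3\subsetneq P_4$ described there actually exists in $A[x_n]$; that is, when there are elements $a,b\in A$ and an exponent $j\geq 0$ with $(ax_n^j,b)\in P_2\setminus P_1$, $(a,0),(b,0)\in P_3$, and $(x_n,0)\in P_4\setminus P_3$. The remaining task is to rule out this pathological chain for $A=\Finfty[x_1,\ldots,x_{n-1}]$.

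The first step is a case analysis on whether $(a,0)$ and $(b,0)$ lie in $P_1$ (equivalently, in $P_2$, since $P_1|_A=P_2|_A$ and $a,b\in A$). If both $(a,0),(b,0)\in P_1$, transitivity immediately yields $(ax_n^j,b)\in P_1$, contradicting $(ax_n^j,b)\notin P_1$. If only one, say $(a,0)$, lies in $P_1$, then $(ax_n^j,0)\in P_1\subset P_2$, and combining with $(ax_n^j,b)\in P_2$ forces $(b,0)\in P_2$, hence $(b,0)\in P_1$, again a contradiction.

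The hard part is the remaining subcase in which neither $(a,0)$ nor $(b,0)$ lies in $P_2$. Here $\bar a,\bar b$ are nonzero in the prime quotient $A[x_n]/P_2$ and satisfy $\bar a\bar x_n^j=\bar b$. In the symmetric situation $a=b$, the prime cancellation axiom applied to $(a\cdot x_n^j,a\cdot 1)\in P_2$ immediately yields $(x_n^j,1)\in P_2$; then $(x_n,0)\in P_4\supseteq P_2$ together with multiplicative closure of $P_4$ forces $(0,1)\in P_4$, contradicting properness. The main obstacle is to extend this to the general asymmetric case $a\ne b$: one must exploit the relation $(ax_n^j,b)\in P_2$, together with the fact that both $a$ and $b$ lie in the kernel of $P_3|_A$ (a prime ideal of the free algebra $\Finfty[x_1,\ldots,x_{n-1}]$), to extract a cancellation of the form $(x_n^k,c)\in P_2$ with $c\notin P_4|_A$. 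Once this is achieved, $(x_n,0)\in P_4$ and multiplicative closure give $(0,c)\in P_4$ and hence $c\in P_4|_A$, the desired contradiction. The freeness of $A$ over $\Finfty$ is used precisely at this cancellation step.
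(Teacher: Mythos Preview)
Your inductive setup and case analysis are fine and match the paper, but the ``hard part'' is a genuine gap. From $(ax_n^{j},b)\in P_2$ alone you cannot extract a relation of the form $(x_n^{k},c)\in P_2$ with $c\in A$ and $(c,0)\notin P_4$: any consequence you draw in $P_2$ from this single relation has its $A$-coefficients built multiplicatively from $a$ and $b$, and since $(a,0),(b,0)\in P_3$ and $P_3$ is prime, every such coefficient already lies in the kernel of $P_3|_A=P_4|_A$. Equivalently, in the fraction field of $A[x_n]/P_2$ the relation only says $\bar x_n^{j}=\bar b\bar a^{-1}$; there is no reason for any power of $\bar x_n$ to lie in the image of $A$. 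The appeal to ``freeness of $A$'' does not supply a cancellation here.

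The paper avoids this obstacle by a variable-switching trick. Since $(a,0),(b,0)\notin P_2$ and $P_2$ is prime, $a$ and $b$ may be taken to be monomials in $x_1,\ldots,x_{n-1}$; then $(a,0)\in P_3$ forces $(x_i,0)\in P_3$ for some $i<n$. Hence in the given long chain some variable other than $x_n$ is killed strictly before $x_n$. Relabel so that $x_1$ is the first variable in the chain with $(x_1,0)$ appearing, and now view the ring as $\Finfty[x_2,\ldots,x_n][x_1]$ and apply the dichotomy theorem with $x_1$ as the adjoined variable. If the pathological alternative occurred for $x_1$, the same monomial argument would produce some $x_j$ with $j\neq 1$ killed in a prime strictly contained in the one where $x_1$ dies, contradicting the choice of $x_1$. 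So only the first alternative of the theorem can hold for $x_1$, giving $\dim\Finfty[x_1,\ldots,x_n]=n$. This reindexing is the missing idea in your plan.
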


\begin{proof}
We proceed by induction. For $n=0$, we are done. Assume that $\Finfty[x_1,...,x_{n-1}]$ is of dimension $n-1$, then we would like to prove that $\Finfty[x_1,...,x_n]$ is of dimension $n$. We prove that any chain of prime congruences is of length $n+1$. Assume the contrary, hence, we have a chain $L$ of prime congruences of length at least $n+2$:

\[L: P_0\subseteq P_1\subseteq ... \subseteq P_{n+1}\subseteq...\]

We have two cases.
\begin{enumerate}
\item Assume that none of the $P_j$ of the the chain $L$ of prime congruence contains any of the $(x_i,0)$. Then, the second case of Theorem \ref{thm:krull} cannot hold because of the above remark implying the statement of the Corollary for $A=\Finfty[x_1,...,x_{n-1}]$ and $x=x_n$.
\item One of the prime congruences $P$ contains one of the $(x_i,0)$. Let $P_j$ be the minimal prime congruence of the chain $L$ which contains one of the $(x_i,0)$. We can assume that it is $(x_n,0)$. Let us denote $\Finfty[x_1,...,x_{n-1}]$ by $A$. Then for any $P_k$ and $P_l$ for $k,l\geq j$ we have that $P_k|_A=P_l|_A$ if and only if $P_k=P_l$. Furthermore the second condition of Theorem \ref{thm:krull} cannot hold for four prime congruences of index at most $j$ again by the previous remark. Hence the chain
\[P_0|_A\subseteq P_1|_A\subseteq ... \subseteq P_{n+1}|_A\subseteq...\]
consists of distinct prime congruences except $P_{j-1}|_A=P_j|_A$. By induction, we are done.
\end{enumerate}
\end{proof}

\section{Prime decomposition}

We begin with some definitions. 

\begin{Definition}
We say that a congruence $C$ is radical, if whenever $(a,b)(a,b)\in C$, then $(a,b)\in C$ or $(a^2\plus b^2,0)\in C$ or $(ab,0)\in C$.
\end{Definition}

\begin{Definition}
We say that a congruence $C$ is cancellative, if whenever $(ab,ac)\in C$, then $(a,0)\in C$ or $(b,c)\in C$.
\end{Definition}

Notice that if $C$ is prime, then $C$ is radical and cancellative. Moreover

\begin{Lemma}
Let $C$ be a cancellative congruence, then $C$ is radical.
\end{Lemma}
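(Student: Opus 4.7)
The hypothesis $(a,b)(a,b)\in C$ unpacks, via the definition $(a,b)(c,d)=(ac+bd,ad+bc)$ and the idempotence of addition in an $\Finfty$-algebra (so $ab+ba=ab$), to the single relation $(a^2+b^2, ab)\in C$. The target is to show that this forces $(a,b)\in C$, $(a^2+b^2,0)\in C$, or $(ab,0)\in C$.

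The plan is to derive from $a^2+b^2\sim ab$ the sharper relation $a^2b\sim ab^2$, and then apply cancellativity to the factorization $(a^2b,ab^2)=(ab\cdot a,\,ab\cdot b)$ to conclude. First, multiply the defining relation by $a$ and by $b$ separately, obtaining $a^3+ab^2\sim a^2b$ and $a^2b+b^3\sim ab^2$ in $C$. The first of these can be read in $A/C$ as saying that $a^2b$ equals a sum that contains $ab^2$ as a summand, so (using idempotence to collapse the repeated $ab^2$ term) $a^2b+ab^2=a^2b$ in $A/C$, i.e.\ $ab^2\le a^2b$ in the natural order on $A/C$. The symmetric argument applied to $a^2b+b^3\sim ab^2$ yields $a^2b\le ab^2$. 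Combining these one-sided inequalities gives $a^2b=ab^2$ in $A/C$, i.e.\ $(a^2b,ab^2)\in C$.

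Finally, the factorizations $a^2b=ab\cdot a$ and $ab^2=ab\cdot b$ allow us to rewrite this as $(ab\cdot a,\,ab\cdot b)\in C$. Invoking the defining property of a cancellative congruence for the common factor $ab$, we conclude that either $(ab,0)\in C$ or $(a,b)\in C$, which already implies radicality (even without needing the third alternative $(a^2+b^2,0)\in C$).

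The only subtle point is the passage from the two congruence relations to an equality: one has to translate back and forth between the relation $\sim$ and the natural partial order on $A/C$, and it is precisely the idempotence of addition that makes an observation like ``$a^3+ab^2=a^2b$ implies $ab^2\le a^2b$'' legitimate (so that antisymmetry can be applied). Apart from that, the argument is a short chain of multiplications followed by a single application of cancellativity.
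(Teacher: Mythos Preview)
Your proof is correct and takes a genuinely different route from the paper's. The paper argues via the identity $(a+b)(a+b,a)=((a+b)^2,a^2+ab)$: since $a^2+b^2\sim ab$ forces both $(a+b)^2$ and $a^2+ab$ to equal $ab$ in $A/C$, cancellativity with the common factor $a+b$ gives either $(a+b,0)\in C$ (whence $(ab,0)\in C$ after multiplying and using the hypothesis) or $(a+b,a)\in C$; the symmetric computation yields $(a+b,b)\in C$, and transitivity gives $(a,b)\in C$. Your approach instead multiplies the hypothesis by $a$ and by $b$ to obtain the two relations $a^2b+ab^2=a^2b$ and $a^2b+ab^2=ab^2$ in $A/C$, deduces $(a^2b,ab^2)\in C$ directly, and cancels the single factor $ab$. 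Your argument is marginally more economical---one application of cancellativity rather than two, and no separate case analysis for $(a+b,0)$---while the paper's version makes the underlying order trichotomy ($a\le b$, $b\le a$, or $a+b=0$) more visible. One harmless slip: in the paper's convention (Proposition~\ref{thm:natorder}) the relation $x+y=x$ means $x\le y$, so your two inequalities are labeled the wrong way round; since you derive both directions and conclude equality, this does not affect the argument.
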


\begin{proof}
Assume that $(a,b)(a,b)=(a^2\plus b^2,ab)\in C$. Then
\[(a\plus b)(a\plus b,a)=((a\plus b)^2,a^2\plus ab).\]
Since $(a^2\plus b^2,ab)\in C$, therefore we obtain that $(a\plus b)(a\plus b,a)\in C$, which implies that either $(a\plus b,0)$ or $(a\plus b,a)\in C$. If the first statement holds then $(ab,0)\in C$. If $(a\plus b,a)\in C$ holds, then by a symmetric line of thoughts we obtain that $(a\plus b,b)\in C$, so $(a,b)\in C$.
\end{proof}

Note that in an $\Finfty$-field every congruence is cancellative, hence radical. Morevoer, annihilators with respect to radical congruences are always radical.

\begin{Lemma}
Let $C$ be a radical congruence of an $\F_\infty$-algebra $A$. Then, for every $c\in A$, $Ann_C(c)$ is radical.
\end{Lemma}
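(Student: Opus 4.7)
My plan is to transfer the radical condition from $C$ to $\Ann_C(c)$ by unfolding the annihilator definition, producing a squared pair inside $C$ via multiplication by $c$, applying the radical hypothesis of $C$, and then reading the three resulting alternatives back through the annihilator definition.

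Concretely, I would start by unpacking the hypothesis: if $(a,b)(a,b) \in \Ann_C(c)$, then by the definition $\Ann_C(c) = \{(x,y) : (xc, yc) \in C\}$ this reads $((a^2+b^2)c,\, abc) \in C$. Using that $C$ is closed under multiplication by elements of $A$, multiplying both coordinates by $c$ yields $((a^2+b^2)c^2,\, abc^2) \in C$; but this expression is precisely the squared pair $(ac, bc)(ac, bc) = (ac, bc)^2$. Applying the radical hypothesis on $C$ to $(ac, bc)^2 \in C$ produces one of three alternatives: either $(ac, bc) \in C$, which unfolds directly to $(a,b) \in \Ann_C(c)$; or $((ac)^2 + (bc)^2,\,0) = ((a^2+b^2)c^2,\,0) \in C$; or $(ac\cdot bc,\,0) = (abc^2,\,0) \in C$. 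The first case concludes the argument immediately, and the latter two are to be reconciled with the required conditions $(a^2+b^2, 0) \in \Ann_C(c)$ and $(ab, 0) \in \Ann_C(c)$, which by the annihilator definition correspond to $((a^2+b^2)c,\,0) \in C$ and $(abc,\,0) \in C$ respectively.

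The main obstacle I foresee is bridging the extra factor of $c$: the output of the radical condition on $C$ carries a factor of $c^2$, whereas the annihilator condition asks only for a single factor of $c$. Resolving this gap cleanly—most likely by combining the hypothesis $((a^2+b^2)c, abc) \in C$ with the idempotent addition of the $\F_\infty$-algebra to rewrite the hypothesis in a form amenable to a second application of the radical property on an auxiliary squared pair—is where the argument needs the most care. Once this reconciliation is carried out, the three cases line up precisely with the three alternatives required for $\Ann_C(c)$ to be radical.
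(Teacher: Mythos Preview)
Your approach is exactly the paper's: multiply the hypothesis by $c$ to obtain $((a,b)c)^2\in C$, then invoke the radical property of $C$. The paper's entire proof is the one-liner ``Assume that $(a,b)^2c\in C$. Then, clearly $((a,b)c)^2\in C$, therefore $(a,b)c\in C$.'' In particular, the paper does not address the second and third alternatives of the radical condition at all; it simply asserts $(a,b)c\in C$ and stops.

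The gap you flagged---that applying the radical condition to $(ac,bc)^2\in C$ produces, in the non-trivial alternatives, conclusions carrying a factor $c^2$ rather than the single $c$ required for $(a^2+b^2,0)\in\Ann_C(c)$ or $(ab,0)\in\Ann_C(c)$---is genuine and is not resolved by the paper's argument either. One small simplification you can make: since the hypothesis already gives $((a^2+b^2)c,abc)\in C$, the two target conditions $((a^2+b^2)c,0)\in C$ and $(abc,0)\in C$ are equivalent, so only one needs to be established. But reducing from $(abc^2,0)\in C$ to $(abc,0)\in C$ using only the radical hypothesis is not obvious; applying radicality to $(abc,0)^2=(a^2b^2c^2,0)$ yields nothing, as two of the three alternatives are automatically satisfied. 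In the paper's actual application (over a field, with the zero-alternatives explicitly excluded by assumption), only the first alternative $(a,b)c\in C$ is ever needed, which may explain why the authors did not linger here. Your instinct that this step requires more care than the paper gives it is correct.
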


\begin{proof}
Assume that $(a,b)^2c\in C$. Then, clearly $((a,b)c)^2\in C$, therefore $(a,b)c\in C$.
\end{proof}

We can also define annihilators of pairs $(c,d)$ as $Ann_C(c,d):=\{(a,b)|(a,b)(c,d)\in C$. This set is basically never a congruence, on the other hand, it is radical in the sense that if $(a,b)^2\in Ann_C(c,d)$, then $(a,b)\in Ann_C(c,d)$ for every radical congruence $C$.

In cancellative congruences, we can take roots of elements as explained in the following lemma. 

\begin{Lemma}
\label{lemma:atechlemma}
Let $C$ be a cancellative congruence of an $\Finfty$-algebra $A$. Then, if $(a,b)(c,d)\in C$, and $(a\plus b,0)\not \in C$ and $(c\plus d,0)\not \in C$, then for every $n$, $(a^n,b^n)(c,d)\in C$.
\end{Lemma}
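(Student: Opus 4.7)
The plan is to proceed by induction on $n$, with the base case $n = 1$ being the hypothesis. For the inductive step, I would first multiply the given relation $(u, v) := (ac + bd,\ ad + bc) \in C$ by the scalar element $a^{n-1} + b^{n-1} \in A$; writing $u_k := a^k c + b^k d$ and $v_k := a^k d + b^k c$ (with the convention $u_0 = v_0 = c+d$), a direct expansion yields
\[
(a^{n-1} + b^{n-1}) u = u_n + ab\,v_{n-2}, \qquad (a^{n-1} + b^{n-1}) v = v_n + ab\,u_{n-2},
\]
so that the pair $(u_n + ab\,v_{n-2},\ v_n + ab\,u_{n-2})$ lies in $C$.

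By the inductive hypothesis applied at index $n - 2$ (the case $n = 2$ being handled by the trivial identity $u_0 = v_0$), the pair $(u_{n-2}, v_{n-2})$ is already in $C$. Multiplying by $ab$ gives $(ab\,u_{n-2},\ ab\,v_{n-2}) \in C$, so that the ``junk'' summands on the two sides of the displayed equivalence become equivalent modulo $C$. Combining these, one obtains $(u_n + Z,\ v_n + Z) \in C$ where $Z := ab\,u_{n-2}$.

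The crux is then to eliminate the common summand $Z$ from both sides. Here I would exploit the cancellativity of $C$ together with the non-vanishing hypotheses: $(a+b, 0) \notin C$ and cancellativity imply inductively that $(a+b)^k \notin C$ for all $k$, and likewise $(c+d)^k \notin C$, so suitable non-vanishing multipliers are available. The plan is to multiply the equivalence $(u_n + Z,\ v_n + Z) \in C$ by an appropriate power of $c+d$ (or $a+b$), rewrite the result using the identities $u_n + v_n = (a^n + b^n)(c+d)$ and $u = (a+b)(c+d) = v$ in $A/C$ (the latter following from $u = v$ and idempotence, since $u + v = (a+b)(c+d)$), and then factor the resulting expressions so that the multiplier may be stripped off via cancellativity, leaving $(u_n, v_n) \in C$.

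I expect the final elimination step to be the main obstacle. Since congruences in an idempotent $\Finfty$-algebra are not closed under additive cancellation, removing $Z$ demands rewriting $u_n + Z$ and $v_n + Z$ as products that share a cancellable non-vanishing factor. Navigating this rewriting using the structural identities for $u_n, v_n, Z$ in the quotient, and keeping careful track of the partial order induced on $A/C$, is the delicate ingredient of the argument.
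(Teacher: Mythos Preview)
Your inductive skeleton is right, and multiplying by $a^{n-1}+b^{n-1}$ is indeed the correct scalar, but you apply it to the wrong pair. By multiplying the \emph{hypothesis} $(u_1,v_1)$ you land at $(u_n+Z,\ v_n+Z)\in C$ and are then forced into an \emph{additive} cancellation of $Z$. In an idempotent $\Finfty$-algebra this is simply unavailable: from $(x+z,\ y+z)\in C$ one cannot in general conclude $(x,y)\in C$ (take any $z\le x$ and $z\le y$ in the natural order; then $x+z=x$ and $y+z=y$ hold for unrelated $x,y$). Your proposed workaround---multiplying by powers of $c+d$ or $a+b$ and hoping to rewrite both sides as a common non-vanishing factor times $u_n$, respectively $v_n$---does not go through: the summand $Z=ab\,u_{n-2}$ does not factor compatibly with $u_n$ and $v_n$, so no common multiplicative factor emerges that cancellativity could strip. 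This is a genuine gap, not merely a delicate step.

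The paper sidesteps the problem by multiplying the \emph{target} pair $(u_n,v_n)=(a^nc+b^nd,\ a^nd+b^nc)$ by the scalar $a^{n-1}+b^{n-1}$. Each side expands as a sum of four monomials, and one shows directly that the two four-term sums coincide in $A/C$ via a short chain of substitutions: among the four monomials one can always group two that share a factor $a^{n-1}b^{n-1}$, $a^n$, $b^n$, $a^{n-1}$, or $b^{n-1}$, and then apply either the base relation $(u_1,v_1)\in C$ or the inductive hypothesis $(u_{n-1},v_{n-1})\in C$ (note: step $n-1$, not $n-2$). Having established $(a^{n-1}+b^{n-1})\cdot(u_n,v_n)\in C$, the conclusion follows by \emph{multiplicative} cancellation of the scalar; no additive cancellation is ever needed. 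Reversing the direction in which you use the scalar is the missing idea.
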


\begin{proof}
We prove the statement by induction. For $n=1$, it is trivial. For $n=2$, we consider $(a\plus b)(a^2,b^2)(c,d)$, or in other words,
\[(a^3c\plus ab^2d\plus a^2bc\plus b^3d,ab^2c\plus a^3d\plus b^3c\plus a^2bd).\]
Since $(ac\plus bd,ad\plus bc)\in C$, hence in $A/C$,
\[(a^3c\plus ab^2d\plus a^2bc\plus b^3d)=(a^3c\plus ab^2c\plus a^2bd\plus b^3d)=\]
\[=(a^3c\plus ab^2d\plus b^3c\plus a^2bd)=(a^3d\plus ab^2d\plus b^3c\plus a^2bc)=\]
\[=(ab^2c\plus a^3d\plus b^3c\plus a^2bd).\]
Since $C$ is cancellative, we obtain that $(a^2,b^2)(c,d)\in C$.

We assume that the statement is true up to $n-1$, and we would like to prove it for $n$. Consider $(a^{n-1}\plus b^{n-1})(a^n,b^n)(c,d)$, or in other words, \[(a^{2n-1}c\plus a^{n-1}b^nd\plus a^nb^{n-1}c\plus b^{2n-1}d,a^{2n-1}d\plus a^{n-1}b^nc\plus a^nb^{n-1}d\plus b^{2n-1}c).\]
Since $(ac\plus bd,ad\plus bc)\in C$ and by the inductive hypothesis
\[(a^{n-1}c\plus b^{n-1}d,a^{n-1}d\plus b^{n-1}c)\in C,\]
we have that in $A/C$, the following equations hold
\[(a^{2n-1}c\plus a^{n-1}b^nd\plus a^nb^{n-1}c\plus b^{2n-1}d)=\]
\[=(a^{2n-1}c\plus a^{n}b^{n-1}d\plus a^{n-1}b^{n}c\plus b^{2n-1}d)=\]
\[=(a^{2n-1}c\plus a^{n}b^{n-1}d\plus a^{n-1}b^{n}d\plus b^{2n-1}c)=\]
\[=(a^{2n-1}d\plus a^{n}b^{n-1}c\plus a^{n-1}b^{n}d\plus b^{2n-1}c)=\]
\[=(a^{2n-1}c\plus a^{n-1}b^nc\plus a^nb^{n-1}d\plus b^{2n-1}c),\]
and we are done.
\end{proof}

As a Corollary we obtain the following surprising statement.

\begin{Corollary}\label{cor:lk}
Let $F$ be a field over $\Finfty$, and assume that $(a,b)(c,d)\in \Delta$ for some elements such that $a\plus b\ne 0$ and $c\plus d\ne 0$. Then any pair $(x,y)$ of the congruence generated by $(a,b)$ is annihilated by $(c,d)$, in other words, $(x,y)(c,d)\in \Delta$.
\end{Corollary}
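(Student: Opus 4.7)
The plan is to show that the \emph{annihilator}
\[
I \;:=\; \{(x,y) \in F \times F : (x,y)(c,d) \in \Delta\} \;=\; \{(x,y) : xc + yd = xd + yc\}
\]
is itself a congruence on $F$. Once this is in place the conclusion is immediate: since $(a,b) \in I$ by hypothesis, the congruence generated by $(a,b)$ must lie inside $I$, which is exactly the assertion of the corollary.

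First I would verify that $I$ is reflexive, symmetric, and closed under the module and ring operations on pairs (componentwise addition, negation, and multiplication by elements of $F$). All of these follow at once from the bilinearity of the pair product $(x,y)(c,d) = (xc+yd,\,xd+yc)$ and are routine. The substantive step, and the one I expect to be the main obstacle, is transitivity, because $\Finfty$-algebras have no general subtraction and every cancellation has to be engineered through the two available identities $u+u=u$ and $u+(-u)=0$.

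For transitivity, given $(x,y),(y,z) \in I$, I would invoke the symmetry of $I$ to rewrite the second relation as $(z,y) \in I$, i.e.\ $zc + yd = zd + yc$. Multiplying the first equation through by $z$ and the second through by $-x$ and then adding, the terms $\pm xzc$ and $\pm xzd$ annihilate pairwise via $u+(-u)=0$, leaving
\[
y \cdot (z + (-x)) \cdot d \;=\; y \cdot (z + (-x)) \cdot c.
\]
When $y \ne 0$ I cancel $y$ (nonzero elements of a field are invertible, hence cancellative), expand, and add $xc + xd$ to both sides; one final use of $u+(-u)=0$ collapses the resulting identity to $xc + zd = xd + zc$, which is exactly $(x,z) \in I$. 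The residual case $y=0$ is painless: if $c=d$ then $I = F \times F$ and the corollary is vacuous; otherwise $(x,0) \in I$ says $xc = xd$, which in a field forces $x=0$, and likewise $z=0$, so $(x,z) = (0,0) \in I$ trivially. Note that the hypotheses $a+b \ne 0$ and $c+d \ne 0$ are not used in the argument itself; they merely ensure we are outside the trivial situations $a = -b$ or $c = -d$, in which the corollary degenerates anyway.
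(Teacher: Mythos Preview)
Your approach has a fatal gap in the transitivity step. In an $\Finfty$-module the element $0$ is \emph{absorbing}: one has $u+(-u)=0$ and $0+v=0$ for every $v$. Hence when you add the two multiplied equations, the presence of the pair $xzc,\,-xzc$ does not merely delete those two terms; it forces the entire left-hand side to collapse to $0$, and likewise on the right. You are left with the identity $0=0$, which carries no information about $(x,z)$.

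This is not a repairable technicality: the set $I=\{(p,q):(p,q)(c,d)\in\Delta\}$ is in general \emph{not} a congruence, even in a field with $c+d\ne 0$. The paper says so explicitly just before the corollary, and here is a concrete witness. In the field $F=\{0\}\cup\{\pm t^{i}:i\in\mathbb{Z}\}$ with $t^{i}+t^{j}=t^{\max(i,j)}$ and $t^{i}+(-t^{j})=0$, take $(c,d)=(1,t)$. Then $(1,-1)\in I$ and $(-1,t)\in I$, since in each case both sides of $pc+qd=pd+qc$ reduce to $0$; but $(1,t)\notin I$, because $1+t^{2}=t^{2}$ while $t+t=t$. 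So transitivity fails. Notice that the offending pairs satisfy $p+q=0$; this is precisely why the hypothesis $a+b\ne 0$ cannot be discarded, contrary to your final remark.

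The paper's argument is therefore necessarily different. It first writes down the congruence generated by $(a,b)$ explicitly: its elements are pairs
\[
\Bigl(u\sum_i\lambda_i(ab^{-1})^{i},\;u\sum_j\mu_j(ab^{-1})^{j}\Bigr),\qquad \sum_i\lambda_i=\sum_j\mu_j=1,
\]
and each such pair decomposes as a sum of pairs of the form $c_i(a^{i},b^{i})$. A separate lemma (using cancellativity and both hypotheses $a+b\ne 0$, $c+d\ne 0$) shows that $(a^{n},b^{n})(c,d)\in\Delta$ for every $n$; bilinearity of the pair product then finishes the argument.
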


\begin{proof}
By Corollary \ref{cor:gen} the congruence generated by $(a,b)$ consists of pairs of the form
\[\left(u(\sum \lambda_i (ab^{-1})^i),u(\sum \mu_j (ab^{-1})^j)\right)\]
for some $u\in F$, and some coefficients $\lambda_i$, $\mu_j$ so that $\sum \lambda_i=\sum \mu_j=1$. Therefore every pair is of the form
\[\left(u(\sum \lambda_i(\sum \mu_j)(ab^{-1})^i,u(\sum \mu_j(\sum \lambda_i)(ab^{-1})^j)\right).\]
Hence, after foiling out we see that any pair in the congruence generated by $(a,b)$ can be written as a sum of 
\[c_i(a^i,b^i)\]
for some $c_i\in F$. By the previous statement, our Corollary follows.
\end{proof}

The following proposition enables us to work with fields instead of cancellative algebras.

\begin{Proposition}\label{prop:frac}
Let $A$ be an $\Finfty$-algebra, then if $C$ is a cancellative congruence, then $A/C$ embeds into a field. 
\end{Proposition}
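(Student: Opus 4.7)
The plan is to imitate the classical construction of a field of fractions. First, replace $A$ by $A/C$: since $C$ is cancellative, $A/C$ is a cancellative $\Finfty$-algebra (if $\bar a\bar b=\bar a\bar c$ with $\bar a\ne 0$, then $(ab,ac)\in C$, and cancellativity of $C$ rules out $(a,0)\in C$ and forces $(b,c)\in C$). It therefore suffices to show that any cancellative $\Finfty$-algebra $B$ embeds into a semifield.

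Set $S:=B\setminus\{0\}$. Cancellativity makes $S$ multiplicatively closed: if $s,t\in S$ with $st=0=s\cdot 0$, then $t=0$, a contradiction. I would then form $S^{-1}B$ as the set of equivalence classes of pairs $(a,s)\in B\times S$ under $(a,s)\sim(a',s')$ iff $as'=a's$. Reflexivity and symmetry are immediate, and transitivity is exactly where cancellativity is needed: from $as'=a's$ and $a's''=a''s'$ one gets $as's''=a''s's$ after multiplying, and cancelling $s'\in S$ yields $as''=a''s$. Define operations by
\[
(a,s)+(a',s'):=(as'+a's,\,ss'),\qquad (a,s)(a',s'):=(aa',\,ss'),\qquad -(a,s):=(-a,s),
\]
with $0:=[(0,1)]$ and $1:=[(1,1)]$. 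Well-definedness of $+$ on classes reduces to checking that $(a,s)\sim(a',s')$ implies $(at+bs)(s't)=(a't+bs')(st)$, which follows by expanding and using $as'=a's$ and commutativity of $B$.

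Next I would verify the $\Finfty$-algebra axioms for $S^{-1}B$. Most are routine. The two that merit attention are the idempotence $(a,s)+(a,s)\sim(a,s)$, which holds because $(as+as,s^2)=(as,s^2)\sim(a,s)$ using idempotence in $B$, and the annihilation $(a,s)+(-a,s)=(as-as,s^2)=(0,s^2)\sim 0$. Distributivity, associativity, and compatibility of $-$ are straightforward consequences of the corresponding identities in $B$. The natural map $B\to S^{-1}B$, $a\mapsto[(a,1)]$, is a homomorphism and is injective because $[(a,1)]=[(b,1)]$ means $a=b$.

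Finally, if $[(a,s)]\ne 0$, then $as\ne 0$ (else cancellativity would force $a=0$), in particular $a\in S$, so $[(s,a)]$ is a multiplicative inverse. Hence $S^{-1}B$ is a semifield containing $B$, and $A/C$ embeds into it via the composite $A/C\hookrightarrow S^{-1}(A/C)$. The main obstacle is purely bookkeeping: verifying well-definedness of addition and checking the $\Finfty$-module idempotence axiom pass through the equivalence relation without covertly requiring more than cancellativity. Once those are done, the rest of the argument is essentially the classical localization formalism transplanted to the $\Finfty$-setting.
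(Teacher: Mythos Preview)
Your proof is correct and is precisely the argument the paper has in mind: the paper's own proof consists of the single sentence ``We leave it to the reader that the standard construction of a fraction field works here.'' You have supplied exactly that standard construction, with the cancellativity hypothesis used in the two places it is genuinely needed (transitivity of the relation on $B\times S$, and multiplicative closure of $S=B\setminus\{0\}$), and you have correctly isolated the only $\Finfty$-specific checks, namely idempotence of addition and $a+(-a)=0$ surviving the passage to equivalence classes.
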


\begin{proof}
We leave it to the reader that the standard construction of a fraction field works here.
\end{proof}

One big advantage of working with fields is that we can more easily construct prime congruences.

\begin{Lemma}
Let $F$ be an $\Finfty$-field and $1\ne x\in F$. Then, a maximal congruence among all congruences which does not contain $(x,1)$ is a prime congruence.
\end{Lemma}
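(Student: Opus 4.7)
The plan is to verify the two conditions in the definition of a prime congruence. Condition 2 (cancellation) is automatic in a field: if $(ac, bc) \in C$ and $c \ne 0$, then $c$ is invertible, so $(a, b) = (c^{-1}\cdot ac,\, c^{-1}\cdot bc) \in C$ by compatibility of $C$ with multiplication. The work is entirely in verifying Condition 1.

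For Condition 1, I would argue by contradiction: assume $(a, b)(c, d) \in C$ with $(a, b), (c, d) \notin C$ and with $ac + bd$ and $ad + bc$ both nonzero. Since $C$ is proper, its kernel is trivial by Lemma~\ref{lemma:ker0}, so $F/C$ is again an $\Finfty$-field. Replacing $F$ by $F/C$ and $C$ by $\Delta$ reduces matters to the following: in a field $F$, suppose $\Delta$ is maximal among radical congruences not containing $(x, 1)$ and $(a, b)(c, d) \in \Delta$ with $a \ne b$, $c \ne d$ and $ac + bd,\, ad + bc$ nonzero; I want to derive a contradiction. By maximality, the radical closure $\sqrt{\langle (a, b)\rangle}$ in $F$ must contain $(x, 1)$, and symmetrically $(x,1) \in \sqrt{\langle (c, d)\rangle}$.

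The main case is when $a + b \ne 0$ and $c + d \ne 0$. Then Corollary~\ref{cor:lk} applies and yields that every pair in the congruence $\langle (a, b)\rangle$ is annihilated by $(c, d)$. Using the observation immediately after the annihilator definition --- that the annihilator of a pair relative to the radical congruence $\Delta$ is closed under taking square roots --- together with Lemma~\ref{lemma:atechlemma} (which gives $(a^n, b^n)(c, d) \in \Delta$ for all $n$ and so controls the pairs that enter when iterating), I would extend annihilation from $\langle (a, b)\rangle$ to its radical closure. Thus $\sqrt{\langle (a, b)\rangle}$ is annihilated by $(c, d)$, whence $(x, 1)(c, d) \in \Delta$, i.e.\ $xc + d = xd + c$; by the symmetric argument $xa + b = xb + a$. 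Combining these two relations with $ac + bd = ad + bc$ and exploiting that $F$ is a field and $x \ne 1$, a direct $\Finfty$-algebra manipulation --- separating the subcases $x = 0$, $x \ne 0$ and using the ordering-versus-incomparability dichotomy of $\Finfty$-field addition --- should force either $c = d$ or $a = b$, contradicting the hypothesis. The exceptional cases $a + b = 0$ or $c + d = 0$ are handled by a direct calculation: writing $(a, b)(c, d)$ when $a + b = 0$ one finds the two components differ by sign, so $(y, -y) \in \Delta$ for $y = ac + bd$, and since $y \ne 0$ this forces $(1, -1) \in \Delta$, from which a short argument again contradicts the maximality.

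The main obstacle is the propagation of the annihilator property through the radical closure. The annihilator set $\{(p, q) : (p, q)(c, d) \in \Delta\}$ is closed under componentwise sum, scalar multiplication, and the square-root operation implicit in radicality, but it is \emph{not} generally closed under transitive closure, so it is not itself a congruence. The delicate step is therefore to interleave the two kinds of closure --- the congruence-generation needed to build $\sqrt{\langle (a,b)\rangle}$, and the root-taking needed for radicality --- while certifying at each stage, via Lemma~\ref{lemma:atechlemma} applied to the accumulated relations, that every newly introduced pair still lies in the annihilator of $(c, d)$.
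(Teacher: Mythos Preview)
Your architecture matches the paper's: pass to $F/C$, use Corollary~\ref{cor:lk} to show that the congruence generated by $(ab^{-1},1)$ lies inside $Ann_\Delta(c,d)$, then invoke maximality to force $(x,1)$ into the radical closure and conclude $(x,1)(c,d)\in\Delta$. You also correctly flag the subtle point that $Ann_\Delta(c,d)$ is radical but not a congruence; the paper leans on exactly this observation and is comparably terse about it.

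The genuine gap is in your endgame. After obtaining $(x,1)(c,d)\in\Delta$ and symmetrically $(x,1)(a,b)\in\Delta$, you propose to combine these with $(a,b)(c,d)\in\Delta$ via ``a direct $\Finfty$-algebra manipulation'' that ``should force either $c=d$ or $a=b$''. No such manipulation is supplied, and I do not see one: the three relations $xc+d=xd+c$, $xa+b=xb+a$, $ac+bd=ad+bc$ are completely symmetric in the three pairs and do not single out any of them. The paper finishes differently and much more directly: having $(x,1)(c,d)\in\Delta$ with $1+x\ne 0$ and $c+d\ne 0$, it simply \emph{reruns the entire argument} with $(cd^{-1},1)$ in the role of $(ab^{-1},1)$ and $(x,1)$ in the role of $(c,d)$. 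One round of this iteration gives $(x,1)\in Ann_\Delta(x,1)$, i.e.\ $(x,1)^2\in\Delta$, and since $\Delta$ is radical with trivial kernel and $x\ne 1$, this is the desired contradiction.

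Two smaller corrections. First, the case $1+x=0$ must be disposed of separately at the outset (the paper notes that then avoiding $(x,1)$ is the same as being proper, and the unique maximal proper congruence of a field is already prime); without $1+x\ne 0$ the second iteration via Corollary~\ref{cor:lk} does not launch. Second, your ``exceptional cases'' $a+b=0$ or $c+d=0$ are superfluous: the hypotheses $(ac+bd,0)\notin C$ and $(ad+bc,0)\notin C$ already force $a+b\ne 0$ and $c+d\ne 0$ in $F/C$, so only the main case occurs.
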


\begin{proof}
First of all, to make sense of the statement, we see that $\Delta$ is a congruence not containing $(x,1)$, and hence, by Zorn lemma, there is a maximal congruence, and we denote it by $P$. 

If $1\plus x=0$, then $(x,1)\in C$ implies that $(0,1)\in C$. Since a field has a unique maximal congruence which is prime, we are done in this case.

We assume that $1\plus x\ne 0$. We show that $P$ is prime. Suppose the contrary. One of the two assertions of being a prime has to fail. Assume that there exists $a,b,c,d\in F\setminus \{0\}$ such that $(a,b)(c,d)\in P$, but $(a,b)\not \in P$, $(b,c)\not \in P$, $(ac\plus bd,0)\not \in P$ and $(ad\plus bc,0)\not \in P$. The latter two conditions imply that $(a\plus b,0)\not \in P$ and $(c\plus d,0)\not \in P$. We leave it to the reader to show that the second assertion of being a prime cannot fail.

For simplicity, we look at $F/P$. In this field, any non-trivial congruence contains $(x,1)$. Consider the congruence generated by $(ab^{-1},1)$. By Corollary \ref{cor:lk}, any $(u,v)$ in the congruence generated by $(ab^{-1},1)$ is annihilated by $(c,d)$. So we obtain that $(x,1)(c,d)\in \Delta$ in $F/P$. By a similar argument, using that $1\plus x\ne 0$, we obtain that $(x,1)^2\in \Delta$ which is a contradiction because every congruence in an $\Finfty$-field is radical.
\end{proof}

We are ready to prove our main theorems of this section.

\begin{Theorem}\label{thm:field}
Let $F$ be an $\Finfty$-field. Then the trivial congruence $\Delta$ is the intersection of prime congruences.
\end{Theorem}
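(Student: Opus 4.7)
The plan is to reduce the statement to the preceding lemma, which produces, for every $x \in F$ with $x \neq 1$, a prime congruence avoiding $(x,1)$. The inclusion $\Delta \subseteq \bigcap_{P \text{ prime}} P$ is trivial since every congruence contains $\Delta$, so the content is the reverse inclusion: for every pair $(a,b)$ with $a \neq b$, I want to produce a prime congruence $P$ with $(a,b) \notin P$.

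I would split into two cases according to whether exactly one component of $(a,b)$ is zero. If, say, $b = 0$ and $a \neq 0$, then by Lemma \ref{lemma:ker0} every proper congruence---in particular every prime congruence---has trivial kernel, so automatically $(a,0) \notin P$ for any prime $P$. To invoke this I only need to know that at least one prime exists in $F$; this follows by applying the previous lemma with $x = -1$, since $-1 \neq 1$ in any $\Finfty$-field (otherwise the idempotence $1 + 1 = 1$ and the inverse axiom $1 + 1 = (-1) + 1 = 0$ would force $1 = 0$, contradicting that $1$ is the identity of the nonzero multiplicative group).

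If instead both $a$ and $b$ are nonzero, set $x = ab^{-1}$. Since $a \neq b$ and $b$ is a unit, $x \neq 1$. Because $b$ is invertible, multiplication by $b$ and by $b^{-1}$ shows that, for any congruence $C$, $(a,b) \in C$ if and only if $(x,1) \in C$. The previous lemma then supplies a prime congruence $P$ not containing $(x,1)$, and consequently not containing $(a,b)$. Combining the two cases, every non-diagonal pair is excluded by some prime, which gives $\bigcap_{P \text{ prime}} P \subseteq \Delta$.

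The real work has already been done in the preceding lemma, whose Zorn-type construction produces a maximal radical congruence avoiding a fixed nontrivial pair $(x,1)$ and verifies its primality. The present theorem is essentially a bookkeeping corollary; its only subtlety is the split of pairs into those with a zero component---handled by the trivial-kernel property in fields---and those that, via scaling by a unit, normalize to the form $(x,1)$ with $x \neq 1$ so that the previous lemma applies directly.
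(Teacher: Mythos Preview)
Your proof is correct and follows the same route as the paper: both reduce the theorem to the preceding lemma by normalizing a non-diagonal pair to the form $(x,1)$ with $x\neq 1$. The paper dispatches this in one line with ``without loss of generality $y=1$,'' whereas you spell out the zero-component case separately via the trivial-kernel property of fields; this extra care is harmless but not strictly needed, since one could also apply the lemma directly with $x=0$.
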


\begin{proof}
Assume that it is not, hence the intersection of all prime congruences contains a tuple $(x,y)$ where $x\ne y$. Without loss of generality we can assume that $y=1$, and using the above Lemma, we obtain that there is a prime not containing our tuple. This is a contradiction.
\end{proof}

\begin{Theorem}\label{thm:primedec}
Let $C$ be a cancellative congruence in the $\Finfty$-algebra $A$. Then $C$ is the intersection of all prime congruences containing $C$.
\end{Theorem}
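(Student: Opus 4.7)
My plan is to deduce this from Theorem \ref{thm:field} by passing to a fraction field and pulling primes back. Concretely, it suffices to show that for every pair $(x,y) \notin C$, there is a prime congruence $P \supseteq C$ with $(x,y) \notin P$; the reverse inclusion $C \subseteq \bigcap P$ is immediate.

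So fix such a pair $(x,y) \notin C$ and consider the quotient $A/C$. By hypothesis, $C$ is cancellative, so Proposition \ref{prop:frac} gives an embedding $\iota\colon A/C \hookrightarrow F$ into an $\Finfty$-field $F$. Let $\pi\colon A \to A/C$ be the canonical projection and set $\phi := \iota \circ \pi$. Since $(x,y) \notin C$, the images $\bar x := \phi(x)$ and $\bar y := \phi(y)$ are distinct elements of $F$, hence $(\bar x, \bar y) \notin \Delta_F$. By Theorem \ref{thm:field}, $\Delta_F$ is the intersection of all prime congruences of $F$, so we may choose a prime congruence $Q \subset F \times F$ with $(\bar x, \bar y) \notin Q$.

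Now define $P := (\phi \times \phi)^{-1}(Q) = \{(a,b) \in A \times A : (\phi(a),\phi(b)) \in Q\}$. Clearly $P$ contains $C$ (because $(\phi(a),\phi(b)) = (\phi(b),\phi(b)) \in \Delta_F \subseteq Q$ whenever $(a,b) \in C$), and by construction $(x,y) \notin P$. The remaining step — which I expect to be the main technical point — is to verify that $P$ is itself a prime congruence. That $P$ is a congruence is a routine check, since $\phi$ is a homomorphism. For the primality conditions: if $(a,b)(c,d) \in P$, then $\phi(a,b)\,\phi(c,d) = (\phi(a),\phi(b))(\phi(c),\phi(d)) \in Q$, and applying primality of $Q$ in $F$ gives one of the four required alternatives in $Q$, each of which pulls back under $\phi$ to the corresponding alternative in $P$ (using $\phi(0)=0$). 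The cancellation clause is handled analogously: $(ac,bc) \in P$ forces $(\phi(a)\phi(c),\phi(b)\phi(c)) \in Q$, whence either $(\phi(a),\phi(b)) \in Q$ or $(\phi(c),0) \in Q$, giving $(a,b) \in P$ or $(c,0) \in P$.

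Finally, $P$ is proper: if it were the total relation, then $Q$ would contain the images of all of $A/C$ in $F$, forcing the primes of $F$ to fail to separate $\bar x$ from $\bar y$, contradicting our choice of $Q$. This completes the construction and shows $\bigcap_{P \supseteq C \text{ prime}} P \subseteq C$, so equality holds. The only delicate checks are the two primality implications for the pullback $P$, and these go through cleanly because both conditions in the definition of a prime congruence are expressed entirely in terms of the algebra operations and the presence of certain pairs in the congruence — features preserved verbatim by pullback along a homomorphism.
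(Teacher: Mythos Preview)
Your proof is correct and follows exactly the approach the paper intends: the paper's proof is a one-line reference to Proposition~\ref{prop:frac} and Theorem~\ref{thm:field}, and you have simply written out the details of how to pull a prime congruence of the fraction field back along the composite $A\to A/C\hookrightarrow F$. The verification that the pullback of a prime congruence along a homomorphism is again prime (which you correctly flag as the main check) is routine, as you argue.
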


\begin{proof}
This follows from Lemma \ref{prop:frac} and Theorem \ref{thm:field}.
\end{proof}

\smallskip

Nikolai Durov in his thesis (\cite{durov2007}) defines a \emph{family of spectra}. The minimal spectrum, the one he refers to as the \emph{unary spectrum} (\textbf{6.1}) is a straight-forward generalization of the spectrum given by prime ideals. Theorem \ref{thm:primedec} shows that our prime spectrum, given by prime congruences, is a refinement of Durov's unary spectrum, as every prime ideal gives rise to a congruence generated by it that is cancellative.

\end{document}